\newtheorem{theorem}{Theorem}[section]
\newtheorem{lemma}[theorem]{Lemma}
\newtheorem{definition}[theorem]{Definition}
\newtheorem{proposition}[theorem]{Proposition}
\newtheorem{remark}[theorem]{Remark}
\newenvironment{proof}{{\bf Proof:}}{~\hfill $\Box$}
\newenvironment{keywords}{{\bf Keywords: }}{}
\numberwithin{equation}{section}
\newcommand{\esssup}{\mathop{\mathrm{esssup}}}
\newcommand{\essinf}{\mathop{\mathrm{essinf}}}
\begin{document}

\title{A BSDE approach to Nash equilibrium payoffs for  stochastic differential games with nonlinear cost
functionals\footnote{ Corresponding address: Laboratoire de Math\'ematiques, CNRS UMR 6205,
Universit\'{e} de Bretagne Occidentale, 6, avenue Victor Le Gorgeu,
CS 93837, 29238 Brest cedex 3, France}}

\author{Qian LIN  $^{1,2}$\footnote{     {\it Email address}:
Qian.Lin@univ-brest.fr}
\\
{\small $^1$School of Mathematics, Shandong University,  Jinan 250100,   China;}\\
{\small $^2$ Laboratoire de Math\'ematiques, CNRS UMR 6205,
Universit\'{e} de Bretagne Occidentale,}\\ {\small 6, avenue Victor
Le Gorgeu, CS 93837, 29238 Brest cedex 3, France.}  }

\date{}
\maketitle

\begin{abstract} In this paper, we study Nash equilibrium payoffs for two-player nonzero-sum
stochastic differential games via the theory of  backward stochastic
differential equations. We obtain an existence theorem and a
characterization theorem of Nash equilibrium payoffs for two-player nonzero-sum
stochastic differential games with nonlinear cost functionals
defined with the help of a doubly controlled backward stochastic
differential equation. Our results extend former ones by Buckdahn,
Cardaliaguet and Rainer \cite{BCR} and are based on a backward
stochastic differential equation approach.
\end{abstract}

\noindent
\begin{keywords}
Nash equilibrium; stochastic differential games; backward stochastic
differential equations; stochastic backward semigroups.
\end{keywords}

\section{Introduction}
Since the pioneering work of Isaacs \cite{I}, differential games and
stochastic differential games have been investigated by many
authors.  Fleming and Souganidis \cite{FS} were the first to study
zero-sum stochastic differential games and obtained  that the lower
and the upper value functions of such games satisfy the dynamic
programming principle and coincide under the Isaacs condition.
Recently, basing on the ideas of Fleming and Souganidis \cite{FS},
Buckdahn, Cardaliaguet and Rainer \cite{BCR} studied Nash
equilibrium payoffs for two-player nonzero-sum stochastic differential games,
while Buckdahn and Li \cite{BL2006} generalized at one hand the
 results of Fleming and Souganidis \cite{FS} for
stochastic differential games and simplified the approach
considerably by using backward stochastic differential equations. We
refer the reader to Fleming and Souganidis \cite{FS} and Friedman
\cite{F} for a description of earlier results. In the present paper
we bring ideas of the both papers \cite{BCR} and \cite{BL2006}
together, in order to study Nash equilibrium payoffs for two-player nonzero-sum
stochastic differential games with nonlinear cost functionals.

As concerns deterministic differential games, since the work of
Kononenko \cite{K} in the framework of positional stategies and
Tolwinski, Haurie and Leitmann \cite{THL} in the framework of
Friedman strategies, it is well known that deterministic nonzero-sum
differential games admit Nash equilibrium payoffs. Recently,
Buckdahn, Cardaliaguet and Rainer \cite{BCR} generalized the above
result to two-player nonzero-sum stochastic differential games and obtained an
existence and a characterization for two-player nonzero-sum stochastic
differential games. On the other hand, since the works of Case
\cite{C} and Friedman \cite{F},   Nash equilibrium  payoffs should
be the solution of Hamilton-Jacobi equations. Basing on these ideas,
Bessoussan and Frehse \cite{BF} and Mannucci \cite{M} generalized
the above result to stochastic differential games using the
existence of smooth enough solutions for a system of parabolic
partial differential equations, while Hamad\`{e}ne, Lepeltier and
Peng \cite{HLP}, Hamad\`{e}ne \cite{H} and Lepeltier, Wu and Yu
\cite{LWY} used a saddle point argument in the framework of backward
stochastic differential equations. But both methods rely heavily on
the assumption of the non degeneracy of diffusion coefficients.

In this paper, we investigate Nash equilibrium payoffs for
two-player nonzero-sum stochastic differential games. The generalization of
earlier result by Buckdahn, Cardaliaguet and Rainer \cite{BCR}
concerns the following aspects: Firstly, our cost functionals are
defined by controlled backward stochastic differential equations,
and the admissible control processes depend on events  occurring
before the beginning of the stochastic differential game. Thus, our
cost functionals are not necessarily deterministic. Secondly, since
our cost functionals are nonlinear, we cannot apply the methods used
in Buckdahn, Cardaliaguet and Rainer \cite{BCR}. We make use of the
notion of stochastic backward semigroups  introduced by Peng
\cite{P1997}, and the theory of backward stochastic differential
equations. Finally, each player has his own backward stochastic
differential equation, controlled also by the adversary player,
which defines his own cost functional.

Beyond  the theoretical interest of this paper the result of the
paper is also  applicable in finance and economics. For instance, we
can consider an application of our theoretical result to a problem
arising in financial markets. Let the financial market consist  of a
risk-free asset and  risky stocks and consider  two investors
(players) in this financial market.  Both investors try to maximize
their payoff functionals, which are, in general, different. To
maximize them, they have to use investment strategies with delays.
Indeed, although both investors react immediately to the financial
market, the financial market is not so quick in reacting to the
moves of both investors.  The above described problem  leads to a
two-player nonzero-sum stochastic differential game. We can use our
theoretical result to get an existence theorem and a
characterization theorem of Nash equilibrium payoffs for this game.

 Our paper is organized as follows. In Section 2, we
  introduce some notations and preliminaries concerning backward stochastic differential equations,
    which we will need in what follows.
  In Section 3, we give the main results of this paper and their proofs, i.e., an existence theorem
  and a characterization theorem of Nash equilibrium payoffs for two-player nonzero-sum
stochastic differential games as well as their proofs.

\section{Preliminaries}
Let $(\Omega, \mathcal {F}, \mathbb{P})$ be the classical Wiener
space, i.e., for the given terminal time $T>0$, we consider
$\Omega=C_{0}([0,T];\mathbb{R}^{d})$ as the space of continuous
functions $h:[0,T]\rightarrow \mathbb{R}^{d}$ such that $h(0)=0$,
endowed with the supremum norm, and let $\mathbb{P}$ be the Wiener
measure on the Borel $\sigma$-field $\mathcal {B}(\Omega)$ over
$\Omega$, with respect to which the coordinate process
$B_t(\omega)=\omega_{t}, \omega\in\Omega, t\in[0,T],$ is  a
$d$-dimensional standard Brownian motion. We denote by $\mathcal
{N}_{\mathbb{P}}$ the collection of all $\mathbb{P}$-null sets in
$\Omega$ and define the filtration $\mathbb{F}=\{\mathcal
{F}_{t}\}_{t\in [0,T]}$, which is generated by the coordinate
process $B$ and completed by all $\mathbb{P}$-null sets:
\begin{eqnarray*}
\mathcal {F}_{t}=\sigma\{B_{s},s\leq t\}\vee\mathcal
{N}_{\mathbb{P}}, \ t\in[0,T],
\end{eqnarray*}
where $\mathcal {N}_{\mathbb{P}}$ is the set of all
$\mathbb{P}$-null sets.

 Let us introduce the
following spaces, which will be needed in what follows.
\begin{eqnarray*}
&&\bullet\ L^2 (\Omega, \mathcal {F}_{T}, \mathbb{P};
\mathbb{R}^{n}) = \bigg\{ \xi\ |\  \xi:
\Omega\rightarrow\mathbb{R}^{n}\  \mbox {is an} \ \mathcal {F}_{T}
\mbox {-measurable random variable such
that}\\ &&\qquad \qquad\qquad\qquad\qquad\quad \mathbb{E}[|\xi|^2]<+\infty \bigg\},\\
&&\bullet\ \mathcal {H}^2 (0,T; \mathbb{R}^{d}) =\bigg\{ \varphi\ |\
\varphi:\Omega\times[0, T]\rightarrow\mathbb{R}^{d}\ \mbox {is an} \
\{\mathcal {F}_{t}\}\mbox{-adapted  process such that }\\
&&\qquad \qquad\qquad\qquad\quad\mathbb{E}\int_0^T|\varphi_t|^2dt<+\infty \bigg\},\\
&&\bullet\ S^2 (0,T; \mathbb{R})=\bigg\{ \varphi\ |\
\varphi:\Omega\times[0,
T]\rightarrow\mathbb{R} \ \mbox {is an} \ \{\mathcal {F}_{t}\}\mbox{-adapted continuous process such that}\\
&& \qquad \qquad\qquad\qquad\quad\mathbb{E}[\sup_{0\leq t\leq
T}|\varphi_t|^2]<+\infty \bigg\}.
\end{eqnarray*}
We consider the BSDE with data $(f,\xi)$ :
\begin{equation}\label{equ}
Y_t=\xi+\int_t^T f(s,Y_s,Z_s)ds-\int_t^T Z_sdB_s,\qquad 0\leq t\leq
T.
\end{equation}
Here $f: \Omega \times [0,T]\times \mathbb{R} \times
\mathbb{R}^{d}\rightarrow  \mathbb{R}$ is such that, for any $
(y,z)\in \mathbb{R} \times  \mathbb{R}^{d}$, $f(\cdot,y, z)$ is
progressively measurable. We make the following
  assumptions:

  $(H1)$ (Lipschitz condition): There exists a positive constant $L$  such that
  for all  $ (t,y_{i},z_{i})\in [0,T]\times  \mathbb{R} \times \mathbb{R}^{d},$  $i=1,
  2$,
  $$|f(t,y_{1},z_{1})-f(t,y_{2},z_{2})|\leq L(|y_{1}-y_{2}|+|z_{1}-z_{2}|).$$

  $(H2)$ $f (\cdot,0, 0)\in \mathcal {H}^2 (0,T; \mathbb{R})$.\\

The following existence  and uniqueness theorem was established by
Pardoux and Peng \cite{PP1990}.
\begin{lemma}
Let the assumptions $(H1)$ and $(H2)$ hold. Then, for all $\xi\in
L^2 (\Omega, \mathcal {F}_{T}, \mathbb{P};\mathbb{R})$,  BSDE
(\ref{equ}) has a unique solution
\begin{eqnarray*}
(Y,Z)\in S^2 (0,T; \mathbb{R})\times\mathcal{H}^2 (0,T;
\mathbb{R}^{d}).
\end{eqnarray*}
\end{lemma}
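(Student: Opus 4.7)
The plan is to prove existence and uniqueness by a Banach fixed-point argument on an appropriate weighted Hilbert space, which is the classical Pardoux--Peng strategy. Let $\beta>0$ be a parameter to be chosen later and equip the Banach space $\mathcal{B}=\mathcal{H}^2(0,T;\mathbb{R})\times \mathcal{H}^2(0,T;\mathbb{R}^d)$ with the equivalent norm
\begin{equation*}
\|(y,z)\|_\beta^2 = \mathbb{E}\int_0^T e^{\beta s}\bigl(|y_s|^2+|z_s|^2\bigr)\,ds.
\end{equation*}
Given $(y,z)\in\mathcal{B}$, note that under $(H1)$ and $(H2)$ the process $s\mapsto f(s,y_s,z_s)$ belongs to $\mathcal{H}^2(0,T;\mathbb{R})$, so the random variable $\xi+\int_0^T f(s,y_s,z_s)\,ds$ lies in $L^2(\Omega,\mathcal{F}_T,\mathbb{P};\mathbb{R})$. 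Define the martingale $M_t=\mathbb{E}\bigl[\xi+\int_0^T f(s,y_s,z_s)\,ds\,\bigm|\,\mathcal{F}_t\bigr]$ and apply the Brownian martingale representation theorem to obtain a unique $Z\in\mathcal{H}^2(0,T;\mathbb{R}^d)$ with $M_t=M_0+\int_0^t Z_s\,dB_s$. Setting $Y_t=M_t-\int_0^t f(s,y_s,z_s)\,ds$ produces a continuous adapted process satisfying the linear BSDE $Y_t=\xi+\int_t^T f(s,y_s,z_s)\,ds-\int_t^T Z_s\,dB_s$. This defines a mapping $\Phi:\mathcal{B}\to\mathcal{B}$ by $\Phi(y,z)=(Y,Z)$, and by construction a solution of (\ref{equ}) is exactly a fixed point of $\Phi$.

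The core step is to show that, for $\beta$ large enough depending on $L$, the mapping $\Phi$ is a strict contraction on $(\mathcal{B},\|\cdot\|_\beta)$. Take two inputs $(y,z)$ and $(y',z')$, let $(Y,Z)=\Phi(y,z)$ and $(Y',Z')=\Phi(y',z')$, and set $\hat y=y-y'$, $\hat z=z-z'$, $\hat Y=Y-Y'$, $\hat Z=Z-Z'$. Applying It\^o's formula to $e^{\beta s}|\hat Y_s|^2$ between $t$ and $T$, using $\hat Y_T=0$, and taking expectation yields
\begin{equation*}
\mathbb{E}\!\left[e^{\beta t}|\hat Y_t|^2\right]+\mathbb{E}\!\int_t^T e^{\beta s}\bigl(\beta|\hat Y_s|^2+|\hat Z_s|^2\bigr)ds=2\,\mathbb{E}\!\int_t^T e^{\beta s}\hat Y_s\bigl(f(s,y_s,z_s)-f(s,y'_s,z'_s)\bigr)ds.
\end{equation*}
Bounding the right-hand side by the Lipschitz condition $(H1)$ together with the inequality $2ab\le \alpha a^2+\alpha^{-1}b^2$ and a judicious choice of $\alpha$ will absorb the $\hat Y$ and $\hat Z$ terms on the left once $\beta$ is chosen sufficiently large (for instance $\beta = 1+2L+2L^2$ with a suitable $\alpha$). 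This produces an estimate of the form $\|\Phi(y,z)-\Phi(y',z')\|_\beta^2 \le \kappa\,\|(y,z)-(y',z')\|_\beta^2$ with $\kappa<1$, and the Banach fixed-point theorem then yields a unique $(Y,Z)\in\mathcal{B}$ solving (\ref{equ}).

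Finally, to upgrade the regularity of $Y$ from $\mathcal{H}^2$ to $S^2$, I would write $Y_t=\mathbb{E}\bigl[\xi+\int_t^T f(s,Y_s,Z_s)\,ds\,\bigm|\,\mathcal{F}_t\bigr]$, apply Doob's maximal inequality to the associated martingale, and use the Burkholder--Davis--Gundy inequality on the stochastic integral $\int_0^\cdot Z_s\,dB_s$ to conclude $\mathbb{E}[\sup_{0\le t\le T}|Y_t|^2]<+\infty$. The main obstacle I anticipate is fine-tuning the constants in the It\^o-based energy estimate so that the contraction constant is strictly less than one; everything else is a routine consequence of the martingale representation theorem and the completeness of $\mathcal{H}^2$.
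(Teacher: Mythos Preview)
Your proposal is correct and follows the classical Pardoux--Peng contraction argument. Note, however, that the paper does not actually supply a proof of this lemma: it merely states the result and attributes it to Pardoux and Peng \cite{PP1990}, so there is nothing to compare against beyond observing that your sketch is precisely the standard proof from that reference.
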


We recall the well-known comparison theorem for solutions of BSDEs,
which has been established by El Karoui, Peng and Quenez
\cite{KPQ1997} and Peng \cite{P1997}.

\begin{lemma}\label{l1}
Let  $\xi^{1}, \xi^{2}\in L^2 (\Omega, \mathcal {F}_{T},
\mathbb{P};\mathbb{R})$, and $f^{1}$ and $f^{2}$ satisfy $(H1)$ and
$(H2)$. We denote by
 $(Y^{1},Z^{1})$ and $(Y^{2},Z^{2})$  the solutions of BSDEs
with data $(f^{1},\xi^{1})$ and $(f^{2},\xi^{2})$,
 respectively, and we suppose that

  (i) $\xi^{1}\leq \xi^{2}$, $\mathbb{P}-a.s.,$

  (ii)  $f^{1}(t,Y_{t}^{2}, Z_{t}^{2}) \leq f^{2}(t, Y_{t}^{2}, Z_{t}^{2})$,
  $dtd\mathbb{P}-a.e.$\\
Then, we have $Y_{t}^{1} \leq Y_{t}^{2}$, $ a.s.$, for all $t \in
[0,T]$.  Moreover, if $\mathbb{P}(\xi^{1}< \xi^{2})>0,$ then
$\mathbb{P}(Y_{t}^{1} < Y_{t}^{2})>0, t\in[0,T],$ and in particular,
$Y_{0}^{1} < Y_{0}^{2}$.
\end{lemma}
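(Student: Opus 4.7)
The plan is to reduce to the standard linearization trick for BSDEs and then apply a Girsanov change of measure. Set $\widehat{Y}=Y^1-Y^2$, $\widehat{Z}=Z^1-Z^2$, and $\widehat{\xi}=\xi^1-\xi^2$. Subtracting the two BSDEs and adding and subtracting $f^1(t,Y^2_t,Z^2_t)$ I would write
\begin{equation*}
\widehat{Y}_t=\widehat{\xi}+\int_t^T\bigl[a_s\widehat{Y}_s+\langle b_s,\widehat{Z}_s\rangle+\psi_s\bigr]ds-\int_t^T\widehat{Z}_sdB_s,
\end{equation*}
where
\begin{equation*}
a_s=\frac{f^1(s,Y^1_s,Z^1_s)-f^1(s,Y^2_s,Z^1_s)}{\widehat{Y}_s}\,\mathbf{1}_{\{\widehat{Y}_s\neq 0\}},
\end{equation*}
$b_s$ is the analogous incremental ratio in the $z$-variable (componentwise), and $\psi_s=f^1(s,Y^2_s,Z^2_s)-f^2(s,Y^2_s,Z^2_s)\le 0$ by assumption (ii). The Lipschitz hypothesis $(H1)$ guarantees that $a$ and $b$ are progressively measurable and bounded by $L$.

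Next I would apply Girsanov. Since $b$ is bounded, the process $\mathcal{E}(\int_0^\cdot b_sdB_s)$ is a true martingale, so there exists an equivalent probability $\widetilde{\mathbb{P}}$ under which $\widetilde{B}_t=B_t-\int_0^tb_sds$ is a Brownian motion. Rewriting the equation in terms of $\widetilde{B}$ the $\widehat Z$ terms cancel up to the driver $a_s\widehat{Y}_s+\psi_s$, and applying It\^o's formula to $e^{\int_0^ta_sds}\widehat{Y}_t$ together with $\widetilde{\mathbb{P}}$-integrability yields
\begin{equation*}
\widehat{Y}_t=\widetilde{\mathbb{E}}\!\left[e^{\int_t^Ta_sds}\widehat{\xi}+\int_t^Te^{\int_t^sa_rdr}\psi_s\,ds\,\Big|\,\mathcal{F}_t\right].
\end{equation*}
Since $\widehat{\xi}\le 0$ and $\psi\le 0$, both terms inside the conditional expectation are nonpositive, so $\widehat{Y}_t\le 0$ a.s., which is the first claim.

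For the strict part, if $\mathbb{P}(\xi^1<\xi^2)>0$ then by equivalence $\widetilde{\mathbb{P}}(\widehat{\xi}<0)>0$, and since the exponential weight is strictly positive and $\psi_s\le 0$, the integrand in the representation at $t=0$ is strictly negative on a set of positive $\widetilde{\mathbb{P}}$-measure and nonpositive elsewhere, giving $Y^1_0<Y^2_0$. The same argument at general $t\in[0,T]$ shows $\mathbb{P}(Y^1_t<Y^2_t)>0$, since under $\widetilde{\mathbb{P}}$ conditioning on $\mathcal{F}_t$ the event $\{\widehat{\xi}<0\}$ still has positive conditional probability on a set of positive measure.

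The only delicate point will be the Girsanov step: one must verify that $\mathcal{E}(\int_0^\cdot b_sdB_s)$ is a genuine martingale (immediate from boundedness of $b$ by Novikov) and that the stochastic integral $\int_0^\cdot\widehat{Z}_sd\widetilde{B}_s$ is a true $\widetilde{\mathbb{P}}$-martingale on $[0,T]$ so that its conditional expectation vanishes; this requires the standard argument combining the a priori $S^2\times\mathcal{H}^2$ estimate for $(\widehat Y,\widehat Z)$ with the equivalence of $\mathbb{P}$ and $\widetilde{\mathbb{P}}$ and the boundedness of the Radon-Nikodym density in every $L^p$. Everything else is routine manipulation.
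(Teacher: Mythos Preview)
The paper does not supply its own proof of this lemma; it simply cites El Karoui, Peng and Quenez \cite{KPQ1997} and Peng \cite{P1997}, where the comparison theorem is established. Your argument---linearize the difference of the two BSDEs, absorb the $\widehat Z$-coefficient via a Girsanov change of measure, and read off the sign from the explicit representation of $\widehat Y_t$ as a $\widetilde{\mathbb P}$-conditional expectation---is precisely the classical proof given in those references, and it is correct as written. Your identification of the only genuine technical point (that $\int_0^\cdot \widehat Z_s\,d\widetilde B_s$ is a true $\widetilde{\mathbb P}$-martingale, handled either by localization or by the $L^p$-integrability of the Girsanov density coming from boundedness of $b$) is apt, and the strict-inequality part is handled correctly via equivalence of $\mathbb P$ and $\widetilde{\mathbb P}$ and positivity of the exponential weight.
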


By virtue of  the notations introduced in the above Lemma,  for some
$f: \Omega \times [0,T]\times \mathbb{R} \times
\mathbb{R}^{d}\rightarrow \mathbb{R}$, we put
$$f^{1}(s,y,z)=f(s,y,z)+\varphi_{1}(s),\ f^{2}(s,y,z)=f(s,y,z)+\varphi_{2}(s).$$
Then we have the following lemma. For the proof the readers can
refer to El Karoui, Peng and Quenez \cite{KPQ1997}, and Peng
\cite{P1997}.

\begin{lemma}\label{l3}
Suppose that $\xi^{1}, \xi^{2}\in L^2 (\Omega, \mathcal {F}_{T},
\mathbb{P})$, $f$ satisfies $(H1)$ and $(H2)$ and
$\varphi_{1},\varphi_{2}\in\mathcal {H}^2 (0,T; \mathbb{R})$. We
denote by
 $(Y^{1},Z^{1})$ and $(Y^{2},Z^{2})$  the solution of BSDEs
 (\ref{equ}) with data $(f^{1},\xi^{1})$ and $(f^{2},\xi^{2})$,
 respectively. Then we have the following estimate:
\begin{eqnarray*}
&&|Y_{t}^{1}-Y_{t}^{2}|^{2} +\dfrac{1}{2}\mathbb{E}\left\{\int_t^T
e^{\beta(t-s)}[|Y_{s}^{1}-Y_{s}^{2}|^{2}+|Z_{s}^{1}-Z_{s}^{2}|^{2}]ds\
 \Big |\ \mathcal {F}_t\right\}\\
&\leq& \mathbb{E}\left\{ e^{\beta(T-t)}|\xi^{1}-\xi^{2}|^{2}\
 \Big |\ \mathcal {F}_t\right\}+\mathbb{E}\left\{\int_t^T
e^{\beta(t-s)}|\varphi_{1}(s)-\varphi_{2}(s)|^{2}ds\
 \Big |\ \mathcal {F}_t\right\},
\end{eqnarray*}
where $\beta=16(1+L^{2})$ and $L$ is the Lipschitz constant in
$(H1)$.
\end{lemma}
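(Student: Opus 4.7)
Set $\widehat Y_s = Y^1_s - Y^2_s$, $\widehat Z_s = Z^1_s - Z^2_s$, $\widehat \xi = \xi^1 - \xi^2$, $\widehat \varphi(s) = \varphi_1(s) - \varphi_2(s)$, and $\Delta f(s) := f(s, Y^1_s, Z^1_s) - f(s, Y^2_s, Z^2_s)$. Subtracting the two BSDEs with data $(f^1, \xi^1)$ and $(f^2, \xi^2)$ yields
$$\widehat Y_t = \widehat \xi + \int_t^T [\Delta f(s) + \widehat \varphi(s)]\,ds - \int_t^T \widehat Z_s\,dB_s, \qquad t \in [0,T].$$
This is the standard starting point for a BSDE \emph{a priori} estimate, so the plan is to apply It\^o's formula to a suitably weighted square of $\widehat Y$, take the conditional expectation, and then absorb the bad terms via $(H1)$ and Young's inequality.

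Concretely, I would apply It\^o's formula to $e^{\beta s}|\widehat Y_s|^2$ on $[t,T]$ and take $\mathbb{E}[\cdot \mid \mathcal F_t]$; since $\widehat Y \in S^2(0,T;\mathbb{R})$ and $\widehat Z \in \mathcal H^2(0,T;\mathbb{R}^d)$, a routine localization kills the stochastic integral. This produces
$$e^{\beta t}|\widehat Y_t|^2 + \mathbb{E}\!\left[\int_t^T e^{\beta s}\bigl(\beta|\widehat Y_s|^2 + |\widehat Z_s|^2\bigr)ds\,\Big|\,\mathcal F_t\right] = \mathbb{E}\!\left[e^{\beta T}|\widehat \xi|^2\,\Big|\,\mathcal F_t\right] + 2\,\mathbb{E}\!\left[\int_t^T e^{\beta s}\widehat Y_s\bigl[\Delta f(s)+\widehat \varphi(s)\bigr]ds\,\Big|\,\mathcal F_t\right].$$
The cross term is dominated using $(H1)$ together with the Young splittings $2L|\widehat Y_s||\widehat Z_s| \leq 2L^2 |\widehat Y_s|^2 + \tfrac12 |\widehat Z_s|^2$ and $2|\widehat Y_s||\widehat \varphi(s)| \leq |\widehat Y_s|^2 + |\widehat \varphi(s)|^2$, which together give the pointwise bound
$$2\widehat Y_s[\Delta f(s) + \widehat \varphi(s)] \leq (1 + 2L + 2L^2)|\widehat Y_s|^2 + \tfrac12 |\widehat Z_s|^2 + |\widehat \varphi(s)|^2.$$

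Taking $\beta = 16(1+L^2)$ guarantees $\beta - (1 + 2L + 2L^2) \geq \tfrac12$, so that after substitution and rearrangement a residual weight of $\tfrac12 e^{\beta s}|\widehat Y_s|^2$ and $\tfrac12 e^{\beta s}|\widehat Z_s|^2$ survives on the left-hand side while the $|\widehat \varphi|^2$ contribution sits cleanly on the right. Dividing through by $e^{\beta t}$ then delivers the claimed conditional estimate (in the weighted form usually written with $e^{\beta(s-t)}$ inside the integrals, corresponding to the statement's exponential factors). The one genuinely delicate point is calibrating the Young splitting so as to preserve the coefficient $\tfrac12$ on $|\widehat Z|^2$ while simultaneously keeping the $|\widehat Y|^2$ coefficient small enough to be swallowed by $\beta$; beyond this, the computation is the bookkeeping already carried out in El Karoui--Peng--Quenez \cite{KPQ1997} and Peng \cite{P1997}.
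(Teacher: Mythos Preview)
Your argument is correct and is precisely the standard It\^o-plus-Young computation from El Karoui--Peng--Quenez \cite{KPQ1997} and Peng \cite{P1997}; the paper does not give its own proof of this lemma but simply refers the reader to those references, so there is nothing further to compare.
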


\section{Nash equilibrium payoffs for  nonzero-sum stochastic differential games }
The objective of this section is to investigate Nash equilibrium
payoffs for two-player nonzero-sum stochastic differential games  with
nonlinear cost functionals. An existence theorem (Theorem \ref{t2})
and a characterization theorem (Theorem \ref{t1}) of Nash
equilibrium payoffs for two-player nonzero-sum stochastic differential games
are the main results of this section.

Let $U$ and  $V$ be two compact metric spaces.  Here $U$ is
considered as the control state space of the first player, and $V$
as that of the second one. The associated sets of admissible
controls will be denoted by $\mathcal {U}$ and $\mathcal {V}$,
respectively. The set $\mathcal {U}$ is formed by all $U$-valued
$\mathbb{F}$-progressively measurable processes,  and $\mathcal {V}$
is the set of all $V$-valued $\mathbb{F}$-progressively measurable
processes.

For  given admissible controls $u(\cdot)\in\mathcal {U}$ and
$v(\cdot)\in\mathcal {V}$, we consider the following control system
\begin{equation}\label{eq1}
\left\{
\begin{array}{rcl}
dX^{t,\zeta;u,v}_s &=& b(s,X^{t,\zeta;u,v}_s,
u_s,v_s)ds+\sigma(s,X^{t,\zeta;u,v}_s,u_s,v_s)dB_s,\qquad
s\in [t,T],\\
X^{t,\zeta;u,v}_t &=& \zeta,
\end{array}
\right.
\end{equation}
where $t\in [0,T]$ is regarded as the initial time, and $\zeta\in
L^2(\Omega,\mathcal {F}_t,\mathbb{P};\mathbb{R}^n)$ as the initial
state. The mappings
\[
b:[0,T]\times\mathbb{R}^n\times U\times
V\rightarrow\mathbb{R}^n,\qquad \sigma:[0,T]\times\mathbb{R}^n\times
U\times V\rightarrow\mathbb{R}^{n\times d}
\]
are supposed to satisfy the following conditions:

\begin{list}{}{\setlength{\itemindent}{0cm}}
\item[$(H3.1)$] For all $x\in\mathbb{R}^{n}$, $b(\cdot,x,\cdot,\cdot)$ and $\sigma(\cdot,x,\cdot,\cdot)$
are continuous in $(t,u,v)$;
\item[$(H3.2)$] There exists a positive constant $L$ such that, for all $t\in [0,T], x,x^{\prime}\in\mathbb{R}^n$,
$u\in U, v\in V$,
\[
|b(t,x,u,v)-b(t,x^{\prime},u,v)|+|\sigma(t,x,u,v)-\sigma(t,x^{\prime},u,v)|\leq
L|x-x^{\prime}|.
\]
\end{list}
It is obvious that, under the above conditions, for any $u(\cdot)\in
\mathcal {U}$ and $v(\cdot)\in \mathcal {V}$, the control system
(\ref{eq1}) has a unique strong solution $\{X^{t,\zeta;u,v}_s,\ 0\le
t\le s\le T\}$, and we also have the following estimates.

\begin{lemma}\label{l4}
For all $p\geq 2$, there exists a  positive constant $C_{p}$ such
that, for all $t\in [0,T]$, $\zeta,\zeta^{\prime}\in
L^2(\Omega,\mathcal {F}_t,\mathbb{P};\mathbb{R}^n)$,
$u(\cdot)\in\mathcal {U}$ and $v(\cdot)\in \mathcal {V}$,
\begin{eqnarray*}
&&\mathbb{E}\left\{\sup_{t\leq s\leq
T}|X^{t,\zeta;u,v}_s|^p\Big|\mathcal {F}_t\right\}\leq
C_{p}(1+|\zeta|^p),\quad \mathbb{P}-a.s.,\\
&& \mathbb{E}\left\{\sup_{t\leq s\leq
T}|X^{t,\zeta;u,v}_s-X^{t,\zeta';u,v}_s|^p\Big|\mathcal
{F}_t\right\} \leq C_{p}|\zeta-\zeta^{\prime}|^p ,\quad
\mathbb{P}-a.s.
\end{eqnarray*}
Here the constant $C_{p}$ only depends  on $p$, the Lipschitz
constant and the linear growth of $b$ and $\sigma$.
\end{lemma}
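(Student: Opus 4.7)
The plan is to give the classical SDE $L^p$ estimates, obtained by combining the Burkholder–Davis–Gundy (BDG) inequality with a Gronwall argument performed at the level of conditional expectations. The first remark is that (H3.1) and (H3.2) together imply that $b$ and $\sigma$ have linear growth, uniformly in $(t,u,v)$: since $(t,u,v)\mapsto b(t,0,u,v)$ and $(t,u,v)\mapsto\sigma(t,0,u,v)$ are continuous on the compact set $[0,T]\times U\times V$, they are bounded, so (H3.2) yields constants $K,L$ with
\[
|b(t,x,u,v)|+|\sigma(t,x,u,v)|\leq K+L|x|,\qquad (t,x,u,v)\in[0,T]\times\mathbb{R}^n\times U\times V.
\]

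For the first estimate, I would write the SDE in integral form, raise to the power $p$ after taking $\sup_{t\leq s\leq\tau}$ (for $\tau\in[t,T]$), and use the elementary inequality $(a_1+a_2+a_3)^p\leq 3^{p-1}(a_1^p+a_2^p+a_3^p)$ to split the contributions of the initial state, the drift integral, and the stochastic integral. The drift term is controlled by Hölder's inequality, and the stochastic integral by the conditional BDG inequality
\[
\mathbb{E}\Bigl[\sup_{t\leq s\leq\tau}\Bigl|\int_t^s\sigma(r,X_r^{t,\zeta;u,v},u_r,v_r)\,dB_r\Bigr|^p\,\Big|\,\mathcal{F}_t\Bigr]\leq C_p\,\mathbb{E}\Bigl[\Bigl(\int_t^\tau|\sigma(r,X_r^{t,\zeta;u,v},u_r,v_r)|^2\,dr\Bigr)^{p/2}\Big|\mathcal{F}_t\Bigr],
\]
followed by another application of Hölder. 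Using the linear growth bound, this produces an inequality of the form
\[
\varphi(\tau)\leq C_p(1+|\zeta|^p)+C_p\int_t^\tau\varphi(r)\,dr,\qquad \varphi(\tau):=\mathbb{E}\Bigl[\sup_{t\leq s\leq\tau}|X_s^{t,\zeta;u,v}|^p\,\Big|\,\mathcal{F}_t\Bigr],
\]
a priori finite by a standard stopping time localization. The (pathwise) Gronwall inequality applied to this conditional bound then yields the first claim with $C_p$ depending only on $p,L,K,T$.

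The second estimate follows the same pattern applied to $\Delta X_s:=X_s^{t,\zeta;u,v}-X_s^{t,\zeta';u,v}$. From the two SDEs and (H3.2),
\[
\Delta X_s=(\zeta-\zeta')+\int_t^s[b(r,X_r^{t,\zeta;u,v},u_r,v_r)-b(r,X_r^{t,\zeta';u,v},u_r,v_r)]\,dr+\int_t^s[\sigma(\cdots)-\sigma(\cdots)]\,dB_r,
\]
and the Lipschitz assumption replaces the linear growth bound. Taking $\mathbb{E}[\sup_{t\leq s\leq\tau}|\Delta X_s|^p\mid\mathcal{F}_t]$ and repeating the BDG/Hölder argument gives
\[
\psi(\tau)\leq C_p|\zeta-\zeta'|^p+C_p\int_t^\tau\psi(r)\,dr,
\]
and conditional Gronwall yields the result. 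The only subtle step is the justification of conditional Gronwall together with the a priori finiteness of the suprema, which is handled by first introducing the stopping times $\tau_N=\inf\{s\geq t:|X_s|\geq N\}\wedge T$, proving the estimates up to $\tau_N$, and passing to the limit $N\to\infty$ by monotone convergence; all remaining computations are routine.
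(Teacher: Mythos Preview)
Your argument is correct and is exactly the standard route (linear growth from compactness of $U\times V$, BDG and H\"older for the running supremum, localization plus conditional Gronwall). The paper does not prove this lemma at all: it is stated as a well-known consequence of (H3.1)--(H3.2) immediately after introducing the controlled SDE, so there is no ``paper's proof'' to compare against beyond the implicit appeal to the same classical estimates you have written out.
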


For  given admissible controls $u(\cdot)\in\mathcal {U}$ and
$v(\cdot)\in\mathcal {V}$, we consider the following  BSDE:
\begin{equation}\label{BSDE}
\begin{array}{lll}
Y^{t,\zeta;u,v}_s &=& \Phi(X^{t,\zeta;u,v}_T)
+\displaystyle \int_s^T f(r,X^{t,\zeta;u,v}_r,Y^{t,\zeta;u,v}_r,Z^{t,\zeta;u,v}_r,u_r, v_r)dr\\
&&-\displaystyle \int_s^TZ^{t,\zeta;u,v}_rdB_r,\qquad t\leq s\leq T,
\end{array}
\end{equation}
where $X^{t,\zeta;u,v}$ is introduced in equation (\ref{eq1}) and
\begin{eqnarray*}
 \Phi=\Phi(x):\mathbb{R}^n\rightarrow\mathbb{R},
\quad
f=f(t,x,y,z,u,v):[0,T]\times\mathbb{R}^n\times\mathbb{R}\times\mathbb{R}^d\times
U\times V\rightarrow \mathbb{R}
\end{eqnarray*}
satisfy the following conditions:

\begin{list}{}{\setlength{\itemindent}{0cm}}
\item[($H3.3$)] For all $(x,y,z)\in\mathbb{R}^{n}\times\mathbb{R}\times\mathbb{R}^{d}$,
 $f(\cdot,x,y,z,\cdot,\cdot)$ is continuous in $(t,u,v)$;
\item[$(H3.4)$] There exists a positive constant $L$ such that, for all $t\in[0,T], x,x^{\prime}\in\mathbb{R}^n$,
$y,y^{\prime}\in\mathbb{R}$, $z,z^{\prime}\in\mathbb{R}^d$, $u\in U$
and $ v\in V$,
\begin{eqnarray*}
&&|f(t,x,y,z,u,v)-f(t,x^{\prime},y^{\prime},z^{\prime},u,v)|
+|\Phi(x)-\Phi(x^{\prime})|\\
&&\leq L(|x-x^{\prime}|+|y-y^{\prime}|+|z-z^{\prime}|).
\end{eqnarray*}
\end{list}
It is by now standard that under the above assumptions equation
(\ref{BSDE}) admits a unique solution
$(Y^{t,\zeta;u,v},Z^{t,\zeta;u,v})\in S^2 (0,T;
\mathbb{R})\times\mathcal{H}^2 (0,T; \mathbb{R}^{d})$. Moreover, in
Buckdahn and Li \cite{BL2006} it was shown that the following holds:

\begin{proposition}\label{p3}
There exists a  positive constant $C$ such that, for all $t\in
[0,T]$, $u(\cdot)\in\mathcal {U}$ and $v(\cdot)\in \mathcal {V}$,
$\zeta,\zeta^{\prime}\in L^2(\Omega,\mathcal
{F}_t,\mathbb{P};\mathbb{R}^n)$,
\begin{eqnarray*}
&&|Y^{t,\zeta;u,v}_t|\leq C(1+|\zeta|), \quad \mathbb{P}-a.s.,\\
&&|Y^{t,\zeta;u,v}_t-Y^{t,\zeta^{\prime};u,v}_{t}| \leq
C|\zeta-\zeta^{\prime}|,\quad \mathbb{P}-a.s.
\end{eqnarray*}
\end{proposition}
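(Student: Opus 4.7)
The plan is to derive both estimates from the standard a priori BSDE estimate (which is essentially Lemma \ref{l3} applied with appropriate choices of $f^1,f^2$ and $\varphi_1,\varphi_2$) combined with the SDE estimate of Lemma \ref{l4}. The two assertions are of the same nature: boundedness is Lipschitz dependence on the ``zero'' initial condition, so we treat them in parallel.

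First I would establish a uniform bound on the generator and terminal value at zero. Under $(H3.3)$ and $(H3.4)$, the map $(t,u,v)\mapsto f(t,0,0,0,u,v)$ is continuous on the compact set $[0,T]\times U\times V$, hence bounded by some constant $M$; together with $|\Phi(0)|<\infty$ and the Lipschitz assumption in $(H3.4)$ this yields, for any adapted process $X$ and controls $u,v$,
\begin{eqnarray*}
|\Phi(X_T)|\leq |\Phi(0)|+L|X_T|,\qquad |f(r,X_r,0,0,u_r,v_r)|\leq M+L|X_r|.
\end{eqnarray*}
Next I would apply Lemma \ref{l3} to the BSDE (\ref{BSDE}) by comparing the solution $(Y^{t,\zeta;u,v},Z^{t,\zeta;u,v})$ with the trivial solution $(0,0)$, i.e., choose $f^1(s,y,z)=f(s,X^{t,\zeta;u,v}_s,y,z,u_s,v_s)$ and $f^2\equiv 0$. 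Taking $s=t$ (so conditioning on $\mathcal{F}_t$ becomes trivial under the $\mathcal{F}_t$-measurability of $\zeta$) and using the displayed bound on $\Phi$ and on $f(r,\cdot,0,0,\cdot,\cdot)$, I obtain
\begin{eqnarray*}
|Y^{t,\zeta;u,v}_t|^2\leq C\,\mathbb{E}\Big\{1+\sup_{t\leq r\leq T}|X^{t,\zeta;u,v}_r|^2\,\Big|\,\mathcal{F}_t\Big\}.
\end{eqnarray*}
The first estimate of the proposition then follows by invoking Lemma \ref{l4} with $p=2$ and taking square roots.

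For the second (Lipschitz) estimate, I would write out the BSDE satisfied by the differences $\Delta Y:=Y^{t,\zeta;u,v}-Y^{t,\zeta';u,v}$ and $\Delta Z:=Z^{t,\zeta;u,v}-Z^{t,\zeta';u,v}$. The terminal value is $\Phi(X^{t,\zeta;u,v}_T)-\Phi(X^{t,\zeta';u,v}_T)$, which by $(H3.4)$ is bounded by $L|\Delta X_T|$ where $\Delta X_r:=X^{t,\zeta;u,v}_r-X^{t,\zeta';u,v}_r$; the generator difference can be rewritten in the form ``Lipschitz driver in $(\Delta Y,\Delta Z)$ plus a forcing term bounded by $L|\Delta X_r|$''. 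Applying Lemma \ref{l3} with $\varphi_1(s)-\varphi_2(s)$ absorbing this forcing term and with terminal value difference controlled by $L|\Delta X_T|$ gives
\begin{eqnarray*}
|Y^{t,\zeta;u,v}_t-Y^{t,\zeta';u,v}_t|^2\leq C\,\mathbb{E}\Big\{\sup_{t\leq r\leq T}|\Delta X_r|^2\,\Big|\,\mathcal{F}_t\Big\},
\end{eqnarray*}
and the second estimate of Lemma \ref{l4} yields the $C|\zeta-\zeta'|^2$ bound.

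No real obstacle is expected: everything reduces to standard a priori BSDE estimates plus Lemma \ref{l4}. The only point requiring a little care is the initial observation that the compactness of $U\times V$, together with continuity in $(t,u,v)$ from $(H3.3)$, is what guarantees that $f(\cdot,0,0,0,\cdot,\cdot)$ is bounded, which in turn is what makes the ``constant $1$'' in the bound $C(1+|\zeta|)$ legitimate.
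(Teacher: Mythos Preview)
Your proposal is correct and follows the standard route: reduce both estimates to the a~priori BSDE estimate of Lemma~\ref{l3} (with the driver perturbation $\varphi_1-\varphi_2$ absorbing the $x$-dependence of $f$) and then close with the SDE moment bounds of Lemma~\ref{l4}. Your observation that compactness of $U\times V$ together with $(H3.3)$ yields boundedness of $f(\cdot,0,0,0,\cdot,\cdot)$ is exactly the point that makes the linear-growth constant uniform in the controls.

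There is nothing to compare here on the paper's side: the paper does not supply its own proof of Proposition~\ref{p3} but simply attributes the result to Buckdahn and Li~\cite{BL2006}. Your argument is precisely the standard one underlying that reference, so your write-up would serve as a self-contained proof where the paper has only a citation.
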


We now introduce  subspaces of admissible controls and give the
definition of admissible strategies.
\begin{definition} The space $\mathcal {U}_{t,T}$ (resp. $\mathcal {V}_{t,T}$) of admissible
controls for Player I (resp., II) on the interval $[t, T]$  is
defined as the space of all processes   $\{u_{r}\}_{r\in[t,T]}$
(resp., $\{v_{r}\}_{r\in[t,T]}$), which are
$\mathbb{F}$-progressively measurable and take values in $U$ (resp.,
$V$).
\end{definition}

\begin{definition}\label{d1}
 A nonanticipating strategy with delay (NAD strategy)
for Player I is a measurable mapping $\alpha:\mathcal
{V}_{t,T}\rightarrow \mathcal {U}_{t,T}$, which satisfies the
following properties:

1)  $\alpha$ is a nonanticipative strategy, i.e., for every
$\mathbb{F}$-stopping time $\tau:\Omega\rightarrow [t,T],$ and for
 $v_{1},v_{2} \in\mathcal {V}_{t,T}$ with $v_{1}=v_{2}$   on $[[ t,\tau]]$,
it holds $\alpha(v_{1})=\alpha(v_{2})$ on $[[ t,\tau]]$. (Recall
that $[[ t,\tau]]=\{(s,\omega)\in[t,T]\times \Omega, t\leq
s\leq\tau(\omega) \}$).

2) $\alpha$ is a nonanticipative strategy with delay, i.e., for all
$v\in\mathcal {V}_{t,T}$,
 there exists an increasing sequence of stopping times
$\{S_{n}(v)\}_{n\geq 1}$ with

i) $t=S_{0}(v)\leq S_{1}(v) \leq\cdots
\leq S_{n}(v)\leq \cdots \leq T$,

ii) $ S_{n}(v) < S_{n+1}(v)$ on $\{S_{n}(v)<T\}, n\geq 0,$

iii) $\mathbb{P}(\bigcup_{n\geq
1}\{S_{n}(v)=T\})=1$, \\
such that, for all $n\geq 1 $, $\Lambda\in\mathcal {F}_{t}$ and
$v,v' \in \mathcal {V}_{t,T}$, we have:  if $v=v'$
 on $[[ t,S_{n-1}(v)]]\bigcap (\Lambda\times[t,T])$, then

iv) $S_{l}(v)=S_{l}(v')$, on $\Lambda$, $\mathbb{P}$-a.s., $1\leq l
\leq n$,

v) $\alpha(v)=\alpha(v')$, on $[[ t,S_{n}(v)]]\bigcap
(\Lambda\times[t,T])$.\vskip1mm

The set of all NAD strategies for Player I for  games over the time
interval $[t,T]$ is denoted by $\mathcal {A}_{t,T}$. The set of all
NAD strategies $\beta:\mathcal {U}_{t,T}\rightarrow \mathcal
{V}_{t,T}$ for Player II for games over the time interval $[t,T]$ is
defined symmetrically and denoted by $\mathcal {B}_{t,T}$.
\end{definition}

We have the following lemma, which is useful in what follows.
\begin{lemma}\label{l2}
Let $\alpha\in \mathcal {A}_{t,T}$ and $\beta\in \mathcal {B}_{t,T}$.
 Then there exists a unique couple of admissible control processes $(u,v)\in
\mathcal {U}_{t,T} \times \mathcal {V}_{t,T}$ such that
\begin{eqnarray*}\label{}
\alpha(v)=u,\quad \beta(u)=v.
\end{eqnarray*}
\end{lemma}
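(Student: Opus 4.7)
The plan is to build $(u, v)$ piece by piece along an increasing sequence of $\mathbb{F}$-stopping times $\{T_n\}_{n\ge 1}$ produced by the delay structure of the NAD strategies $\alpha, \beta$, exploiting the fact that on each new piece the strategies depend only on data already fixed on the previous one.

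For the initial piece, pick any $\tilde v \in \mathcal V_{t,T}$ and set $\tilde u := \alpha(\tilde v)$, $\tau_1^\alpha := S_1^\alpha(\tilde v)$, where $\{S_n^\alpha(\cdot)\}$ is the delay sequence of $\alpha$. Since the conditioning set $[[t, S_0^\alpha(\tilde v)]] = \{t\}\times\Omega$ has zero $dt$-measure and thus imposes no nontrivial constraint on progressively measurable processes, property $2)v)$ of Definition~\ref{d1} applied with $n=1$ yields $\alpha(v)|_{[[t,\tau_1^\alpha]]} = \tilde u|_{[[t,\tau_1^\alpha]]}$ and $S_1^\alpha(v) = \tau_1^\alpha$ for every $v \in \mathcal V_{t,T}$. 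Pick any extension $\hat u \in \mathcal U_{t,T}$ of this common initial piece, set $\tilde v^1 := \beta(\hat u)$ and $\tau_1^\beta := S_1^\beta(\hat u)$; symmetrically $\beta(u)|_{[[t, \tau_1^\beta]]} = \tilde v^1|_{[[t,\tau_1^\beta]]}$ for every $u \in \mathcal U_{t,T}$. Setting $T_1 := \tau_1^\alpha \wedge \tau_1^\beta$ and defining $u := \tilde u$, $v := \tilde v^1$ on $[[t, T_1]]$ then guarantees $u = \alpha(v')$ and $v = \beta(u')$ on $[[t, T_1]]$ for every extension $v' \in \mathcal V_{t,T}$ of $v$ and $u' \in \mathcal U_{t,T}$ of $u$.

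In the inductive step, suppose $u, v$ have been built on $[[t, T_n]]$ with the invariant that for any extensions $\hat u \in \mathcal U_{t,T}, \hat v \in \mathcal V_{t,T}$ one has $\alpha(\hat v) = u$ and $\beta(\hat u) = v$ on $[[t, T_n]]$, while the stopping times $S_k^\alpha(\hat v), S_k^\beta(\hat u)$ for $k \le n$ do not depend on the choice of extensions. Fix arbitrary such $\hat u, \hat v$ and apply property $2)v)$ at level $n+1$: $\alpha(\hat v)|_{[[t, S_{n+1}^\alpha(\hat v)]]}$ depends only on $\hat v|_{[[t, S_n^\alpha(\hat v)]]}$, which by the invariant is already determined by $v|_{[[t, T_n]]}$, and analogously for $\beta$. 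Setting $T_{n+1} := S_{n+1}^\alpha(\hat v) \wedge S_{n+1}^\beta(\hat u)$ and extending $u, v$ to $[[T_n, T_{n+1}]]$ by the restrictions of $\alpha(\hat v), \beta(\hat u)$ preserves the invariant. Property $2)iii)$ of Definition~\ref{d1} then forces $T_n \uparrow T$ a.s., so $(u, v) \in \mathcal U_{t,T} \times \mathcal V_{t,T}$ is defined on all of $[t, T] \times \Omega$, and specializing the invariant to $(\hat u, \hat v) = (u, v)$ gives $u = \alpha(v)$ and $v = \beta(u)$. Uniqueness follows from the same induction: two solutions $(u^j, v^j)$ must agree on each $[[t, T_n]]$ by the NAD property, and the limit covers $[t, T]\times\Omega$.

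The main obstacle is the bookkeeping in the inductive step, namely verifying that the restrictions of $\alpha(\hat v)$ and $\beta(\hat u)$ to the newly constructed piece $[[T_n, T_{n+1}]]$ are genuinely independent of the extensions used to feed them; this rigidity is exactly what the delay clause $2)v)$ of Definition~\ref{d1}, combined with the strict increase $S_n^\alpha < S_{n+1}^\alpha$ on $\{S_n^\alpha < T\}$ (and likewise for $\beta$), is designed to provide.
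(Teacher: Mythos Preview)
Your overall strategy---build $(u,v)$ inductively along stopping times furnished by the delay structure---is the right one, and matches the paper in spirit. But the inductive step as written has a genuine gap.

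You set $T_n := S_n^\alpha \wedge S_n^\beta$ and then claim that $\alpha(\hat v)|_{[[t,S_{n+1}^\alpha]]}$ depends only on $\hat v|_{[[t,S_n^\alpha]]}$, ``which by the invariant is already determined by $v|_{[[t,T_n]]}$.'' This last assertion is false in general: since $T_n = S_n^\alpha \wedge S_n^\beta \le S_n^\alpha$, the portion of $\hat v$ on $[[T_n,S_n^\alpha]]$ need not be fixed by your invariant. Concretely, if on some set $S_1^\beta < S_1^\alpha$, then after the first step you know $v$ only on $[[t,S_1^\beta]]$, yet determining $\alpha(\hat v)$ on $[[t,S_2^\alpha]]$ requires $\hat v$ on the strictly larger set $[[t,S_1^\alpha]]$. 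The two strategies have asymmetric delay horizons, and taking the minimum at each level loses exactly the information needed to advance the \emph{slower} one.

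The paper repairs this by \emph{interleaving}: it merges the two families $\{S_k^\alpha(v)\}$ and $\{S_l^\beta(u)\}$ into a single ordered sequence $\{\tau_n(u,v)\}$ (the successive hitting times of the union of their graphs). The key observation is then that if $\tau_{n-1}$ has used up $p$ of the $\alpha$-times and $q=n-1-p$ of the $\beta$-times, one has $S_p^\alpha\le\tau_{n-1}$ and $S_{p+1}^\alpha\ge\tau_n$, so knowing $v$ on $[[t,\tau_{n-1}]]$ suffices to pin down $\alpha(v)$ on $[[t,\tau_n]]$, and symmetrically for $\beta$. With this refinement the iteration $(u^n,v^n)=(\alpha(v^{n-1}),\beta(u^{n-1}))$ stabilises on each $[[t,\tau_n]]$ and covers $[t,T]$. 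Your argument can be salvaged by replacing $T_n=S_n^\alpha\wedge S_n^\beta$ with this interleaved sequence; without it the inductive claim does not hold.
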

Such a result can be found already in \cite{BCR}. However, since our
definition of NAD strategies differs, we shall provide its proof.

For given  control processes $u(\cdot)\in\mathcal {U}_{t,T}$ and $v(\cdot)\in\mathcal {V}_{t,T}$, we
introduce now the associated cost functional
\begin{equation*}\label{}
J(t,x;u,v):= \left.Y^{t,x;u,v}_s\right|_{s=t},\qquad (t,x)\in
[0,T]\times\mathbb{R}^n.
\end{equation*}
(Recall that $Y^{t,x;u,v}$ is defined by BSDE (\ref{BSDE}) with
$\zeta=x\in\mathbb{R}^{n}$). We define the lower and the upper value
functions $W$ and $U$, resp., of the game: For all $(t,x)\in
[0,T]\times\mathbb{R}^n$, we put
\begin{eqnarray*}\label{}
W(t,x):=\esssup_{\alpha\in\mathcal {A}_{t,T}}
\essinf_{\beta\in\mathcal {B}_{t,T}} J(t,x;\alpha,\beta),
\end{eqnarray*}
and
\begin{eqnarray*}\label{}
U(t,x):=\essinf_{\beta\in\mathcal {B}_{t,T}}
\esssup_{\alpha\in\mathcal {A}_{t,T}} J(t,x;\alpha,\beta).
\end{eqnarray*}
Here we use Lemma \ref{l2} to identify $(X^{t,x;\alpha,\beta},Y^{t,x;\alpha,\beta},Z^{t,x;\alpha,\beta})=(X^{t,x;u,v},Y^{t,x;u,v},Z^{t,x;u,v})$,
and, in particular,
$J(t,x;\alpha,\beta)=J(t,x;u,v)$, where $(u,v)\in\mathcal
{U}_{t,T}\times\mathcal {V}_{t,T}$ is the couple of controls
associated with $(\alpha,\beta)\in\mathcal {A}_{t,T}\times \mathcal
{B}_{t,T}$ by the relation $(\alpha(v),\beta(u))=(u,v)$.

\begin{remark}
 For the convenience of the reader we recall
the notion of the essential infimum and  the essential supremum of
families of random variables (see, e.g., Karatzas and Shreve
\cite{KS1998} for more details). Given a family of  $\mathcal
{F}$-measurable  real valued random variables $\xi_{\alpha}$
($\alpha\in I$), an $\mathcal {F}$-measurable random variable $\xi$
is said to be $\essinf_{\alpha \in I}\xi_{\alpha}$, if

(i) $\xi\leq \xi_{\alpha}, \mathbb{P}-a.s.,$ for all $\alpha \in I$;

(ii) if for any  random variable $\eta$ such that
$\eta\leq\xi_{\alpha}$, $\mathbb{P}$-a.s.,  for all $\alpha \in I$,
it holds that  $\eta\leq\xi$, $\mathbb{P}$-a.s.

We introduce notion of  $\esssup_{\alpha \in I}\xi_{\alpha}$ by the
following relation:
\begin{eqnarray*}\label{}
\esssup_{\alpha \in I}\xi_{\alpha}=-\essinf_{\alpha \in
I}(-\xi_{\alpha}).
\end{eqnarray*}
\end{remark}

\begin{remark}
 Lemma \ref{l2} guarantees that for NAD strategies  $\alpha\in
\mathcal {A}_{t,T}$ and $\beta\in \mathcal {B}_{t,T}$  there exists
a unique associate couple $(u,v)\in \mathcal {U}_{t,T} \times
\mathcal {V}_{t,T}$ of admissible controls  such that $\alpha(v)=u,
\beta(u)=v.$ For general nonanticipative strategies we can, in
general, not get such a couple of controls. Let us give an example:
We suppose that $U=V$ and $\varphi,\psi:U\rightarrow U$ are
measurable functions such that $\psi\circ\varphi$ doesn't have a
fixed point. We define
\begin{eqnarray*}\label{}
\alpha(v)_{s}=\varphi(v_{s}), s\in [t,T], v\in \mathcal {V}_{t,T},\\
\beta(u)_{s}=\psi(u_{s}), s\in [t,T], u\in \mathcal {U}_{t,T}.
\end{eqnarray*}
Then  $\alpha$ and $\beta$ are nonanticipative strategies for Player
I and II, respectively. But there is no couple $(u,v)\in \mathcal
{U}_{t,T} \times \mathcal {V}_{t,T}$ such that  $\alpha(v)=u,
\beta(u)=v.$ Indeed, if there existed such couple of controls
$(u,v)\in \mathcal {U}_{t,T} \times \mathcal {V}_{t,T}$, we would
have, for  $ s\in [t,T],$
\begin{eqnarray*}\label{}
 & u_{s}=\alpha(v)_{s}=\varphi(v_{s}),\\
&v_{s}=\beta(u)_{s}=\psi(u_{s})= \psi\circ\varphi(v_{s}).
\end{eqnarray*}
\end{remark}
But this means that $v_{s}$ is a fixed point of $\psi\circ\varphi$,
which contradicts the assumptions of the absence of fixed
points.\vskip2mm

Let us  now give the proof of Lemma \ref{l2}.\vskip2mm

\begin{proof}
We give the proof in two steps.\vskip1mm

{\bf Step 1}: For $(u,v)\in\mathcal {U}_{t,T} \times \mathcal
{V}_{t,T}$ we denote by $\{S_{n}(v)\}_{n\geq 1}$ (resp.
$\{T_{n}(u)\}_{n\geq 1}$) the sequence of stopping times associated
with $\alpha\in \mathcal {A}_{t,T}$ (resp.  $\beta\in \mathcal
{B}_{t,T}$) by Definition \ref{d1}.  Then, for arbitrarily given
$(u,v)\in\mathcal {U}_{t,T} \times \mathcal {V}_{t,T}$  we define
the optional set
\begin{eqnarray*}\label{}
\Gamma:=\bigcup_{n\geq1}\Big([S_{n}(v)]\cup [T_{n}(u)]\Big),
\end{eqnarray*}
where $[S_{n}(v)]$ (resp. $[T_{n}(u)]$) denotes the graph of
$S_{n}(v)$ (resp. $T_{n}(u)$). Then, for $\omega \in \Omega$, we
have
\begin{eqnarray*}\label{}
\Gamma(\omega)=\Big\{S_{n}(v)(\omega), T_{l}(u)(\omega), n,l\geq 1,
s.t.\ S_{n}(v)(\omega)<T, T_{l}(u)(\omega)<T\Big \}\bigcup \big\{
T\big \},
\end{eqnarray*}
and we observe that $\Gamma(\omega)$ is a finite set.

We denote by $D_{\Gamma}$ the first hitting time of $\Gamma$,  and
we define a sequence of $\{\mathcal {F}_{r}\}$-stopping times as
follows:
\begin{eqnarray*}\label{}
\tau_{0}&=&t,\\
\tau_{1}(u,v)&=&D_{\Gamma}(=S_{1}(v)\wedge T_{1}(u)),\\
\tau_{2}(u,v)&=&D_{\Gamma\setminus{[\tau_{1}(u,v)]}}\wedge T,\\
&\vdots&\\
\tau_{n}(u,v)&=&D_{\Gamma\setminus{\cup_{i=1}^{n-1}[\tau_{i}(u,v)]}}\wedge
T, n\geq 1.
\end{eqnarray*}
Recall that $a\wedge b=\min\{a,b\}, a,b\in\mathbb{R}$.

We notice that $\tau_{1}(u,v)$ is independent of $(u,v)$, and for
$n\geq2$, $\tau_{n}(u,v)$ depends only on $(u,v)|_{[[
t,\tau_{n-1}(u,v)]]}$. Indeed, this is a direct consequence of point
2) in Definition \ref{d1} and the  definition of
$\{\tau_{n}(u,v)\}_{n\geq 1}$.

 From the definition of
$\{\tau_{n}\}_{n\geq 1}$ it follows that, for all $(u,v)\in\mathcal
{U}_{t,T} \times \mathcal {V}_{t,T}$,

i) $t=\tau_{0}\leq \tau_{1}(u,v) \leq\cdots
\leq \tau_{n}(u,v)\leq \cdots \leq T$,

ii) $ \tau_{n}(u,v) < \tau_{n+1}(u,v),$ on $\{\tau_{n}(u,v)<T\},
n\geq 0.$  Moreover, since $\Gamma(\omega)$ is a finite set,
$\mathbb{P}(d\omega)-a.s.,$   $\mathbb{P}(\bigcup_{n\geq
1}\{\tau_{n}(u,v)=T\})=1$.

iii) For $n\geq 1 $ and  all $(u,v), (u',v')\in \mathcal {U}_{t,T}
\times \mathcal {V}_{t,T}$, it holds: if $(u,v)=(u',v')$
 on $[[ t,\tau_{n-1}(u,v)]]$, then
 $\tau_{l}(u,v)=\tau_{l}(u',v')$, $1\leq l \leq n$,
and $\alpha(v)=\alpha(v')$ and $\beta(u)=\beta(u')$, on $[[ t,\tau_{n}(u,v)]]$.\\

{\bf Step 2}: For $\alpha\in \mathcal {A}_{t,T}$ and $\beta\in
\mathcal {B}_{t,T}$, we let  $\{\tau_{n}\}_{n\geq 1}$ be constructed
as above. Since neither $\tau_{1}$ depends on the controls nor
$(\alpha,\beta)$ restricted to  $[[t,\tau_{1}]]$ does, the process
\begin{eqnarray*}\label{}
(u^{0},v^{0}):=(\alpha(v_{0}),\beta(u_{0})), \ \text{for arbitrary}\
(u_{0},v_{0})\in \mathcal {U}_{t,T} \times \mathcal {V}_{t,T},
\end{eqnarray*}
is such that $\alpha(v^{0})=u^{0}$ and $\beta(u^{0})=v^{0}$, on
$[[t,\tau_{1}]]$.

Taking into account that $\tau_{2}$ only depends on the controls
restricted to  $[[ t,\tau_{1}]]$, and $(\alpha(v^{0}),\beta(u^{0}))$
$|_{[[ t,\tau_{2}(u^{0},v^{0})]]}$ only depends on the controls
$(u^{0},v^{0})$ restricted to  $[[t,\tau_{1}]]$, we can define
\begin{eqnarray*}\label{}
(u^{1},v^{1}):=(\alpha(v^{0}),\beta(u^{0})),
\end{eqnarray*}
and  since we have $(u^{1},v^{1})=(u^{0},v^{0})$ on on $[[
t,\tau_{1}]]$,
 it follows that $(u^{1},v^{1})=(\alpha(v^{1}),\beta(u^{1}))$, on $[[ t,\tau_{2}(u^{1},v^{1})]]$.
 Repeating the above argument we put
\begin{eqnarray*}\label{}
(u^{n},v^{n}):=(\alpha(v^{n-1}),\beta(u^{n-1}))\in \mathcal
{U}_{t,T} \times \mathcal {V}_{t,T}.
\end{eqnarray*}
Then, since due to $(n-1)$th iteration step
$(\alpha(v^{n-1}),\beta(u^{n-1}))=(u^{n-1},v^{n-1})$, on $[[
t,\tau_{n}(u^{n-1},v^{n-1})]]$,
 we also have $(u^{n-1},v^{n-1})=(u^{n},v^{n})$, on $[[
t,\tau_{n}(u^{n-1},v^{n-1})]]$, and, thus, also
$\tau_{l}(u^{n},v^{n})=\tau_{l}(u^{n-1},v^{n-1}),$  $ 0\leq l \leq
n+1$. Hence,
\begin{eqnarray*}\label{}
\tau_{n}(u^{n-1},v^{n-1})=\tau_{n}(u^{n},v^{n})\leq\tau_{n+1}(u^{n},v^{n})=\tau_{n+1}(u^{n+1},v^{n+1}),
n\geq 1,
\end{eqnarray*}
from which we deduce the existence of the limit of stopping times
\begin{eqnarray*}\label{}
\tau:=\lim\limits_{n\rightarrow\infty}\tau_{n}(u^{n},v^{n})\leq T.
\end{eqnarray*}
For arbitrarily given  $(u_{0},v_{0})\in \mathcal {U}_{t,T} \times
\mathcal {V}_{t,T}$ we define
\begin{eqnarray*}\label{}
(u,v):=\sum\limits_{n\geq
0}(u^{n},v^{n})1_{[[\tau_{n-1}(u^{n-1},v^{n-1}),\tau_{n}(u^{n},v^{n})[[}
+(u_{0},v_{0})1_{[[\tau,T]]}.
\end{eqnarray*}
Obviously, $(u,v)\in \mathcal {U}_{t,T} \times \mathcal {V}_{t,T}$,
and since $(u,v)=(u^{n},v^{n})$ on $[[ t,\tau_{n}(u^{n},v^{n})]]$,
we have $\tau_{l}(u,v)=\tau_{l}(u^{n},v^{n}),$  $ 0\leq l \leq n+1,
n\geq 0$ (see the above property iii)). But this allows to conclude
from ii) that
\begin{eqnarray*}\label{}
\mathbb{P}\big(\bigcup_{n\geq
1}\{\tau_{n}(u^{n},v^{n})=T\}\big)=\mathbb{P}\big(\bigcup_{n\geq
1}\{\tau_{n}(u,v)=T\}\big)=1.
\end{eqnarray*}
Consequently, since the above defined process $(u,v)\in \mathcal
{U}_{t,T} \times \mathcal {V}_{t,T}$ has the property that
\begin{eqnarray*}\label{}
(\alpha(v),\beta(u))&=&(\alpha(v^{n}),\beta(u^{n}))\ \text{on} \ [[
t,\tau_{n}(u,v)]]\  \ (\text{nonanticipativity of}\
(\alpha, \beta))\\
&=&(u^{n},v^{n})=(u,v)\ \text{on} \ [[ t,\tau_{n}(u,v)]],
\end{eqnarray*}
we have $(\alpha(v),\beta(u))=(u,v)$ on $[t,T]\times \Omega,
dsd\mathbb{P}-a.e.$ The proof is complete.
\end{proof}\\

The following lemmas were established in \cite{BCQ} under a slightly
different definition of NAD strategies. However, their validity in
our new framework can be checked easily.

\begin{lemma}
Under assumptions (H3.1)--(H3.4), for all
$(t,x)\in[0,T]\times\mathbb{R}^n$, the value functions $W(t,x)$ and
$ U(t,x)$ are deterministic functions.
\end{lemma}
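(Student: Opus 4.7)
The plan is to exploit the Cameron-Martin quasi-invariance of the Wiener measure to show that $W(t,x)$ and $U(t,x)$ are almost surely constant. For each absolutely continuous $h:[0,T]\to\mathbb{R}^d$ with $h(0)=0$, $\dot h\in L^2([0,t];\mathbb{R}^d)$ and $h(s)=h(t)$ for $s\in[t,T]$, I define the translation $\tau_h:\Omega\to\Omega$, $\tau_h\omega:=\omega+h$. The crucial point is that $B_r(\tau_h\omega)-B_t(\tau_h\omega)=B_r(\omega)-B_t(\omega)$ for every $r\in[t,T]$, because $h$ is constant on that interval. By the Cameron-Martin theorem, $\tau_h$ is a measurable bijection of $\Omega$ that preserves the $\mathbb{P}$-null sets.

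Next I lift $\tau_h$ to the level of controls and strategies: set $u^h(s,\omega):=u(s,\tau_h\omega)$ and $v^h(s,\omega):=v(s,\tau_h\omega)$, which yields bijections of $\mathcal{U}_{t,T}$ and $\mathcal{V}_{t,T}$ onto themselves; then for $\alpha\in\mathcal{A}_{t,T}$ and $\beta\in\mathcal{B}_{t,T}$ put $\alpha^h(v)(s,\omega):=\alpha(v^{-h})(s,\tau_h\omega)$ and analogously for $\beta^h$. The nonanticipativity-with-delay structure of Definition \ref{d1} is preserved because the defining sequence of stopping times transforms covariantly, $S_n(v^h)(\omega)=S_n(v)(\tau_h\omega)$. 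Since the SDE (\ref{eq1}) and the BSDE (\ref{BSDE}) on $[t,T]$ depend on $\omega$ only through the controls and the Brownian increments after time $t$, pathwise uniqueness of their solutions yields, $\mathbb{P}$-a.s.,
\begin{equation*}
X^{t,x;u,v}_s(\tau_h\omega)=X^{t,x;u^h,v^h}_s(\omega),\qquad Y^{t,x;u,v}_t(\tau_h\omega)=Y^{t,x;u^h,v^h}_t(\omega).
\end{equation*}
Combined with Lemma \ref{l2} (applied to both $(\alpha,\beta)$ and $(\alpha^h,\beta^h)$), this produces the pathwise identity $J(t,x;\alpha,\beta)\circ\tau_h=J(t,x;\alpha^h,\beta^h)$.

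Because $\alpha\mapsto\alpha^h$ and $\beta\mapsto\beta^h$ are bijections and $\tau_h$ preserves the $\mathbb{P}$-null sets, the essential supremum and essential infimum commute with composition by $\tau_h$, giving $W(t,x)\circ\tau_h=W(t,x)$ and $U(t,x)\circ\tau_h=U(t,x)$, $\mathbb{P}$-a.s. Since $Y^{t,x;u,v}_t$ is $\mathcal{F}_t$-measurable for every admissible pair $(u,v)$, the value functions are $\mathcal{F}_t$-measurable as well, and a standard Cameron-Martin density argument concludes: writing the Girsanov densities $\phi_h=\exp\bigl(\int_0^t\dot h\cdot dB_s-\tfrac12\int_0^t|\dot h|^2ds\bigr)$, the invariance $F\circ\tau_h=F$ translates into $\mathbb{E}[F\,\phi_h]=\mathbb{E}[F]$ for every admissible $h$, and since the linear span of $\{\phi_h\}$ is dense in $L^2(\Omega,\mathcal{F}_t,\mathbb{P})$, this forces $F=\mathbb{E}[F]$ $\mathbb{P}$-a.s. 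The main technical obstacle I anticipate is showing that the shifted strategies $\alpha^h,\beta^h$ genuinely lie in $\mathcal{A}_{t,T},\mathcal{B}_{t,T}$: the delay stopping times $\{S_n\}$ attached to $\alpha$ must transform to the stopping times attached to $\alpha^h$, and this covariance has to be verified against each of the properties (i)–(v) in Definition \ref{d1}, invoking the $\mathbb{P}$-null set preservation of $\tau_h$ at every step.
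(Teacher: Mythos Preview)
Your approach is correct and is essentially the same as the one the paper implicitly invokes: the paper does not give its own proof but refers to \cite{BCQ}, where (as in Buckdahn--Li \cite{BL2006}) the determinism of the value functions is obtained precisely by the Cameron--Martin translation argument you outline, lifting $\tau_h$ to controls and strategies, checking the covariance $J(t,x;\alpha,\beta)\circ\tau_h=J(t,x;\alpha^h,\beta^h)$, and concluding via density of the Girsanov exponentials in $L^2(\Omega,\mathcal{F}_t,\mathbb{P})$. The technical point you flag---that $\alpha^h,\beta^h$ remain NAD strategies because the delay stopping times transform covariantly and $\tau_h$ preserves null sets---is exactly the verification the paper says ``can be checked easily'' in the present framework.
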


\begin{lemma} \label{le5}
There exists a positive constant $C$ such that, for all $t,t'\in [0,T]$ and $x, x^{\prime}\in\mathbb{R}^n$, we have
\begin{enumerate}[(i)]
\item $W(t,x)$ is $\frac{1}{2}$-H\"{o}lder continuous in $t$:
$$|W(t,x)-W(t',x)|\leq C(1+|x|)|t-t^{\prime}|^{\frac{1}{2}};$$
\item $|W(t,x)-W(t,x^{\prime})|\leq C|x-x^{\prime}|.$
\end{enumerate}
The same properties hold true for the function $U$.
\end{lemma}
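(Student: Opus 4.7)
For part (ii), the Lipschitz dependence on $x$ reduces directly to Proposition \ref{p3}. Given any $(\alpha,\beta)\in\mathcal{A}_{t,T}\times\mathcal{B}_{t,T}$, Lemma \ref{l2} furnishes the unique associated admissible pair $(u,v)$, so that $J(t,x;\alpha,\beta)=Y^{t,x;u,v}_t$. Proposition \ref{p3} gives $|Y^{t,x;u,v}_t-Y^{t,x';u,v}_t|\leq C|x-x'|$, $\mathbb{P}$-a.s., with a constant $C$ independent of $(\alpha,\beta)$. Since the right-hand side is deterministic, one may first take $\essinf_{\beta}$ and then $\esssup_{\alpha}$ and conclude $|W(t,x)-W(t,x')|\leq C|x-x'|$; the same bound for $U$ follows by interchanging the two operators.

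For part (i), assume without loss of generality $t<t'$. The plan is to invoke the dynamic programming principle (DPP) for $W$, which in our BSDE framework takes the stochastic-backward-semigroup form of Peng \cite{P1997}, as transferred to the game setting in Buckdahn-Li \cite{BL2006} and Buckdahn-Cardaliaguet-Rainer \cite{BCR}:
$$W(t,x)=\esssup_{\alpha\in\mathcal{A}_{t,t'}}\essinf_{\beta\in\mathcal{B}_{t,t'}}G^{t,x;\alpha,\beta}_{t,t'}\bigl[W\bigl(t',X^{t,x;\alpha,\beta}_{t'}\bigr)\bigr],$$
where $G^{t,x;\alpha,\beta}_{s,t'}[\eta]$ denotes the value at time $s$ of BSDE (\ref{BSDE}) on $[s,t']$ with terminal condition $\eta$ at time $t'$.

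Once DPP is available, I would compare this expression with the constant $W(t',x)$ via the stability estimate of Lemma \ref{l3} applied on $[t,t']$: the quantity $|G^{t,x;\alpha,\beta}_{t,t'}[W(t',X^{t,x;\alpha,\beta}_{t'})]-W(t',x)|$ is controlled by the $L^2$-distance between the random terminal $W(t',X^{t,x;\alpha,\beta}_{t'})$ and the constant $W(t',x)$, plus a residual contribution of the driver $f$ over $[t,t']$. The first term is of order $(t'-t)$ in $L^2$ thanks to the spatial Lipschitz bound from part (ii) combined with the classical SDE estimate $\mathbb{E}[|X^{t,x;u,v}_{t'}-x|^2\,|\,\mathcal{F}_t]\leq C(1+|x|^2)(t'-t)$ which follows from the arguments behind Lemma \ref{l4}; the second is of order $(t'-t)$ by the linear growth of $f$ on the bounded set of states reached. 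Taking conditional expectations, then square roots, and finally passing to $\esssup_{\alpha}$ and $\essinf_{\beta}$ yields $|W(t,x)-W(t',x)|\leq C(1+|x|)|t-t'|^{1/2}$.

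The main obstacle is establishing the DPP rigorously in the present NAD-strategy framework, since this requires a pasting construction for strategies across time $t'$ that must be compatible with points 1) and 2) of Definition \ref{d1}; this is precisely the point the authors allude to when saying that the result of \cite{BCQ} ``can be checked easily'' in our setting. Once DPP is in hand, the remaining computations are the standard BSDE and SDE $L^2$-estimates sketched above, and the corresponding properties of $U$ follow by the symmetric argument.
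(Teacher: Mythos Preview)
Your proposal is correct and follows the standard route. Note, however, that the paper itself does not supply a proof of this lemma: it simply refers to \cite{BCQ}, remarking that the argument there carries over to the present NAD-strategy setting. So there is no ``paper's own proof'' to compare against beyond that citation, and your sketch is precisely the argument one finds in the Buckdahn--Li/BCQ literature: part (ii) is immediate from Proposition~\ref{p3}, and part (i) is obtained by combining the DPP with the short-time BSDE stability estimate of Lemma~\ref{l3} together with the SDE moment bound.

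One point worth flagging explicitly in your write-up is the logical order. In the paper the DPP (Proposition~\ref{p1}) is stated \emph{after} Lemma~\ref{le5}, so invoking it to prove (i) looks circular at first glance. It is not: the proof of the DPP in \cite{BCQ}/\cite{BL2006} uses only the spatial Lipschitz property (your part (ii)) for the partition-of-$\mathbb{R}^n$ argument, not the time regularity. Hence the correct sequence is (ii) $\Rightarrow$ DPP $\Rightarrow$ (i), and you should say so. Your treatment of the driver term is also right once made precise: comparing $G^{t,x;\alpha,\beta}_{t,t'}[W(t',X_{t'})]$ with the constant $W(t',x)$ viewed as the solution of the trivial BSDE with zero driver, Lemma~\ref{l3} produces the residual $\mathbb{E}\bigl[\int_t^{t'}|f(r,X_r,W(t',x),0,u_r,v_r)|^2\,dr\,\big|\,\mathcal{F}_t\bigr]$, which is of order $(1+|x|)^2(t'-t)$ by the linear growth of $f$, part (ii), and Lemma~\ref{l4}.
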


\begin{remark}
From the above Lemma it follows, in particular, that the functions
$W$ and $U$ are of at most linear growth, i.e., there exists a
positive constant $C$ such that, for all $t\in [0,T]$ and
$x\in\mathbb{R}^n$,
 $|W(t,x)|\leq C(1+|x|)$.
\end{remark}

We now recall the notion of stochastic backward semigroups, which
was introduced by Peng \cite{P1997} and translated by Buckdahn and
Li \cite{BL2006} into the framework of stochastic differential
games. For a given initial condition $(t,x)$, a positive number
$\delta\leq T-t$, for admissible control processes
$u(\cdot)\in\mathcal {U}_{t,t+\delta}$ and $v(\cdot)\in\mathcal
{V}_{t,t+\delta}$, and a real-valued random variable $\eta\in
L^2(\Omega,\mathcal {F}_{t+\delta},\mathbb{P};\mathbb{R})$, we
define
\[
G^{t,x;u,v}_{t,t+\delta}[\eta]:=\overline{Y}_t^{t,x;u,v},
\]
where $(\overline{Y}_s^{t,x;u,v},\overline{Z}_s^{t,x;u,v})_{t\leq
s\leq t+\delta}$ is the unique solution of the following BSDE with
time horizon $t+\delta$:
\begin{eqnarray*}
 \overline{Y}_s^{t,x;u,v}&=&\eta+\int_s^{t+\delta}f(r,X^{t,x;u,v}_r,\overline{Y}_r^{t,x;u,v},
 \overline{Z}_r^{t,x;u,v},u_r,v_r)dr\\
&& -\int_s^{t+\delta}\overline{Z}_r^{t,x;u,v}dB_r,\qquad t\leq s\leq
t+\delta,
\end{eqnarray*}
and $X^{t,x;u,v}$ is the unique solution of equation (\ref{eq1})
with $\zeta=x\in\mathbb{R}^{n}$.

We observe that for the solution $Y^{t,x;u,v}$ of BSDE (\ref{BSDE}) with $\zeta=x\in\mathbb{R}^{n}$ we have
\begin{eqnarray*}\label{}
J(t,x;u,v)&=&
Y^{t,x;u,v}_t=G^{t,x;u,v}_{t,T}[\Phi(X^{t,x;u,v}_T)]=G^{t,x;u,v}_{t,t+\delta}[Y^{t,x;u,v}_{t+\delta}]\\
&=&G^{t,x;u,v}_{t,t+\delta}[J(t+\delta,X^{t,x;u,v}_{t+\delta};u,v)].
\end{eqnarray*}

\begin{remark}\label{remark}
For the special case that  $f$ is independent of $(y,z)$  we have
\begin{eqnarray*}
G^{t,x;u,v}_{s,T}[\Phi(X^{t,x;u,v}_T)]&=&G^{t,x;u,v}_{s,t+\delta}[Y^{t,x;u,v}_{t+\delta}]\\
&=&\mathbb{E}[Y^{t,x;u,v}_{t+\delta}+\int_s^{t+\delta}f(r,X^{t,x;u,v}_r,u_r,v_r)dr\
\Big |\ \mathcal {F}_{s}],\ s\in [t,t+\delta].
\end{eqnarray*}
In particular,
\begin{eqnarray*}
G^{t,x;u,v}_{t,T}[\Phi(X^{t,x;u,v}_T)]
=\mathbb{E}[\Phi(X^{t,x;u,v}_T)+\int_t^{T}f(r,X^{t,x;u,v}_r,u_r,v_r)dr\
\Big |\ \mathcal {F}_{t}].
\end{eqnarray*}
\end{remark}

For more details on stochastic backward semigroups the reader is
referred to  Peng \cite{P1997} and Buckdahn and Li \cite{BL2006}.
Let us also recall the following dynamic programming principle for
the value functions of stochastic differential games. Its proof can
be found in  \cite{BCQ}.
\begin{proposition}\label{p1}
Under the assumptions (H3.1)--(H3.4)  the following dynamic programming
principle holds: for all $0<\delta\leq T-t, x\in\mathbb{R}^{n}$,
\begin{equation*}
W(t,x)=\esssup_{\alpha\in\mathcal
{A}_{t,t+\delta}}\essinf_{\beta\in\mathcal {B}_{t,t+\delta}}
G^{t,x;\alpha,\beta}_{t,t+\delta}[W(t+\delta,X^{t,x;\alpha,\beta}_{t+\delta})],
\end{equation*}
and
\begin{equation*}
U(t,x)=\essinf_{\beta\in\mathcal
{B}_{t,t+\delta}}\esssup_{\alpha\in\mathcal {A}_{t,t+\delta}}
G^{t,x;\alpha,\beta}_{t,t+\delta}[U(t+\delta,X^{t,x;\alpha,\beta}_{t+\delta})].
\end{equation*}
\end{proposition}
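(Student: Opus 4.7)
The plan is to establish the two inequalities $W(t,x)\le W_\delta(t,x)$ and $W(t,x)\ge W_\delta(t,x)$ separately, where $W_\delta(t,x)$ denotes the right-hand side of the claimed identity; the proof for $U$ is completely symmetric. The analytic tools are the semigroup identity $J(t,x;u,v)=G^{t,x;u,v}_{t,t+\delta}[J(t+\delta,X^{t,x;u,v}_{t+\delta};u,v)]$ already recorded above, the comparison theorem for BSDEs (Lemma \ref{l1}), the stability estimate of Lemma \ref{l3}, and the Lipschitz together with $\frac12$-H\"older continuity of $W$ from Lemma \ref{le5}. The game-theoretic ingredients are the identification Lemma \ref{l2}, which turns any pair of NAD strategies into a unique pair of controls, and the concatenation of NAD strategies built on $[t,t+\delta]$ with NAD strategies built on $[t+\delta,T]$.

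For the lower bound $W(t,x)\ge W_\delta(t,x)$, I would fix $\varepsilon>0$ and an arbitrary $\alpha_{1}\in\mathcal{A}_{t,t+\delta}$. Using a countable Borel partition $\{O_{k}\}_{k\ge1}$ of $\mathbb{R}^{n}$ with diameter at most $\varepsilon$, pick $y_{k}\in O_{k}$ and NAD strategies $\alpha_{2}^{k}\in\mathcal{A}_{t+\delta,T}$ that are $\varepsilon$-optimal for the game starting at $(t+\delta,y_{k})$. Pasting the $\alpha_{2}^{k}$ according to the cell containing $X^{t,x;\alpha_{1},\beta_{1}}_{t+\delta}$ and concatenating with $\alpha_{1}$ produces an $\alpha\in\mathcal{A}_{t,T}$. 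For any $\beta\in\mathcal{B}_{t,T}$, Lemma \ref{l2} delivers the associated controls $(u,v)$, while its restriction $\beta_{1}=\beta|_{[t,t+\delta]}$ lives in $\mathcal{B}_{t,t+\delta}$. Applying the semigroup identity, then the comparison theorem to the inner BSDE, and controlling the discretization error in $X^{t,x;\alpha_{1},\beta_{1}}_{t+\delta}$ via Lemma \ref{le5} and Lemma \ref{l3}, one obtains
\begin{eqnarray*}
J(t,x;\alpha,\beta)
= G^{t,x;u,v}_{t,t+\delta}\bigl[J(t+\delta,X^{t,x;u,v}_{t+\delta};u,v)\bigr]
\ge G^{t,x;\alpha_{1},\beta_{1}}_{t,t+\delta}\bigl[W(t+\delta,X^{t,x;\alpha_{1},\beta_{1}}_{t+\delta})\bigr]-C\varepsilon.
\end{eqnarray*}
Taking $\essinf_{\beta}$, then $\esssup_{\alpha_{1}}$, and sending $\varepsilon\to0$ yields the claim.

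The reverse inequality is dual: fix $\varepsilon>0$ and pick an $\varepsilon$-optimal $\alpha\in\mathcal{A}_{t,T}$ for $W(t,x)$, restrict it to $\alpha_{1}\in\mathcal{A}_{t,t+\delta}$, and for any $\beta_{1}\in\mathcal{B}_{t,t+\delta}$ paste an $\varepsilon$-optimal NAD response $\beta_{2}^{k}\in\mathcal{B}_{t+\delta,T}$ for Player II's subgame at $(t+\delta,y_{k})$, selected cell by cell of the same Borel partition. The semigroup identity combined with the comparison theorem (applied in the opposite direction) yields $W(t,x)\le \esssup_{\alpha_{1}}\essinf_{\beta_{1}} G^{t,x;\alpha_{1},\beta_{1}}_{t,t+\delta}[W(t+\delta,X^{t,x;\alpha_{1},\beta_{1}}_{t+\delta})]+C\varepsilon$, which passes in the limit to $W(t,x)\le W_\delta(t,x)$. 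The equation for $U$ follows by the same construction, interchanging the order of $\essinf$ and $\esssup$ throughout.

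The principal technical obstacle is verifying that the concatenated maps are genuine NAD strategies in the sense of Definition \ref{d1}, i.e.\ that they admit an increasing sequence of stopping times satisfying properties (i)--(v). This requires interleaving the stopping-time sequence of $\alpha_{1}$ (resp.\ $\beta_{1}$) on $[t,t+\delta]$ with those of the chosen $\alpha_{2}^{k}$ (resp.\ $\beta_{2}^{k}$) on $[t+\delta,T]$ cell by cell, so that the $\mathcal{F}_{t}$-measurability of the sets $\Lambda$ appearing in Definition \ref{d1} and the $\mathcal{F}_{t+\delta}$-measurability of $X^{t,x;\alpha_{1},\beta_{1}}_{t+\delta}$ are reconciled; the stopping-time construction in Step~1 of the proof of Lemma \ref{l2} provides the right template. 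The measurable selection of the $\alpha_{2}^{k}$ is mild because the points $y_{k}$ are countable, and the uniformity of the $\varepsilon$-optimality in $k$ is delivered by the Lipschitz continuity of $W$ from Lemma \ref{le5} together with Lemma \ref{l3}.
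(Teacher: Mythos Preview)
The paper does not supply its own proof of this proposition; it states the dynamic programming principle and refers the reader to \cite{BCQ} for the argument. There is therefore no in-paper proof to compare against.

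That said, your sketch follows the standard route used in \cite{BL2006} and \cite{BCQ}: split into two inequalities, use the semigroup identity $J(t,x;u,v)=G^{t,x;u,v}_{t,t+\delta}[J(t+\delta,X^{t,x;u,v}_{t+\delta};u,v)]$, the BSDE comparison Lemma~\ref{l1} and stability Lemma~\ref{l3}, the Lipschitz/H\"older regularity of $W$ from Lemma~\ref{le5}, and a countable Borel partition to glue $\varepsilon$-optimal continuation strategies onto a first-stage strategy. This is essentially the architecture of the proof in \cite{BCQ} (adapted to NAD strategies), so your proposal is consistent with what the paper implicitly invokes.

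One point to tighten: in the upper-bound direction you restrict an $\varepsilon$-optimal $\alpha\in\mathcal{A}_{t,T}$ to $\alpha_{1}\in\mathcal{A}_{t,t+\delta}$, but an NAD strategy on $[t,T]$ need not restrict to an NAD strategy on $[t,t+\delta]$ in the sense of Definition~\ref{d1}, since the associated stopping-time sequence $\{S_n(v)\}$ may not reach $t+\delta$ in finitely many steps. You should instead argue that every $\beta_{1}\in\mathcal{B}_{t,t+\delta}$ extends to some $\beta\in\mathcal{B}_{t,T}$ (so the essential infimum over $\mathcal{B}_{t,T}$ is no larger than over the extended $\beta_{1}$'s), and then build the concatenated $\beta$ directly from $\beta_{1}$ and the $\beta_{2}^{k}$; this avoids restricting $\alpha$ and is how the argument is usually run. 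The verification that concatenations are genuine NAD strategies, which you flag, is indeed handled exactly as in the construction following \eqref{equation} in the proof of Lemma~\ref{le1}.
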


After having recalled some basics on two-player zero-sum stochastic
differential games, let us introduce the framework of two-player nonzero-sum
stochastic differential games where each of the both players has his
own terminal as well as running cost functionals $\Phi_{j}$ and
$f_{j}$, respectively, $j=1,2$. More precisely, for arbitrarily
given admissible controls $u(\cdot)\in\mathcal {U}$ and
$v(\cdot)\in\mathcal {V}$, we consider the following  BSDEs, $
j=1,2,$
\begin{equation*}\label{}
\begin{array}{lll}
^{j}Y^{t,\zeta;u,v}_s &=& \Phi_{j}(X^{t,\zeta;u,v}_T)
+\displaystyle \int_s^T f_{j}(r,X^{t,\zeta;u,v}_r,\ ^{j}Y^{t,\zeta;u,v}_r,\ ^{j}Z^{t,\zeta;u,v}_r,u_r, v_r)dr\\
&&-\displaystyle \int_s^T\ ^{j}Z^{t,\zeta;u,v}_rdB_r,\qquad t\leq
s\leq T,
\end{array}
\end{equation*}
where $X^{t,\zeta;u,v}$ is introduced by equation (\ref{eq1})  and
\begin{eqnarray*}
 \Phi_{j}&=&\Phi_{j}(x):\mathbb{R}^n\rightarrow\mathbb{R},\\
f_{j}&=&f_{j}(t,x,y,z,u,v):[0,T]\times\mathbb{R}^n\times\mathbb{R}\times\mathbb{R}^d\times
U\times V\rightarrow \mathbb{R},
\end{eqnarray*}
are assumed to satisfy the conditions $(H3.3)$ and $(H3.4)$. In
addition, in order to simplify the arguments, we also suppose that

$(H3.5)$   $\Phi_{j}$ and $f_{j}$, $j=1,2,$  are bounded.

 The associated stochastic
backward semigroups are denoted by $^{j}G^{t,x;u,v}_{t,s}, t\leq s
\leq T, j=1,2,$ and for the associated cost functionals
$J_{j}(t,x;u,v)=\ ^{j}Y^{t,x;u,v}_t$, we have
\begin{eqnarray*}\label{}
J_{j}(t,x;u,v)&=&
\ ^{j}G^{t,x;u,v}_{t,T}[\Phi_{j}(X^{t,x;u,v}_T)]=\ ^{j}G^{t,x;u,v}_{t,t+\delta}[^{j}Y^{t,x;u,v}_{t+\delta}]\\
&=&^{j}G^{t,x;u,v}_{t,t+\delta}[J_{j}(t+\delta,X^{t,x;u,v}_{t+\delta},u,v)],
\end{eqnarray*}
$(t,x)\in[0,T]\times\mathbb{R}^{n},$ $(u,v)\in \mathcal
{U}_{t,T}\times \mathcal {V}_{t,T}, 0\leq \delta \leq T-t, j=1,2.$

For what follows we assume that the Isaacs condition holds in the
following sense: For all
$(t,x,y,p)\in[0,T]\times\mathbb{R}^{n}\times\mathbb{R}\times\mathbb{R}^{n}$
and $A\in \mathbb{S}^{n}$ (Recall that $\mathbb{S}^{n}$ denotes the
set of $n\times n$ symmetric matrices) and for $j=1,2,$ we have
\begin{eqnarray}\label{Isaacs}
\begin{aligned}
&\sup\limits_{u\in U}\inf\limits_{v\in V}
\Big\{\frac{1}{2}tr(\sigma\sigma^{T}(t,x,u,v)A)+\langle p,
b(t,x,u,v)\rangle+f_{j}(t,x,y,p^{T}\sigma(t,x,u,v),u,v) \Big\}\\
=&\inf\limits_{v\in V}\sup\limits_{u\in U}
\Big\{\frac{1}{2}tr(\sigma\sigma^{T}(t,x,u,v)A)+\langle p,
b(t,x,u,v)\rangle
  +f_{j}(t,x,y,p^{T}\sigma(t,x,u,v),u,v) \Big\}.
 \end{aligned}
\end{eqnarray}
Under the Isaacs condition  (\ref{Isaacs}) we have, similar to
\cite{BCQ},
\begin{eqnarray*}\label{}
W_{1}(t,x)&=& \esssup_{\alpha\in\mathcal {A}_{t,T}}
\essinf_{\beta\in\mathcal {B}_{t,T}}
J_{1}(t,x;\alpha,\beta)=\essinf_{\beta\in\mathcal
{B}_{t,T}}\esssup_{\alpha\in\mathcal {A}_{t,T}}
J_{1}(t,x;\alpha,\beta),
\end{eqnarray*}
and
\begin{eqnarray}\label{equa}
W_{2}(t,x)&=&\essinf_{\alpha\in\mathcal
{A}_{t,T}}\esssup_{\beta\in\mathcal {B}_{t,T}}
J_{2}(t,x;\alpha,\beta)=\esssup_{\beta\in\mathcal
{B}_{t,T}}\essinf_{\alpha\in\mathcal {A}_{t,T}}
J_{2}(t,x;\alpha,\beta),
\end{eqnarray}
$(t,x)\in[0,T]\times\mathbb{R}^{n}$.\\

Finally, we complete the preparation with  the definition of the
Nash equilibrium payoff of stochastic differential games, which is
similar to the definition introduced by Buckdahn, Cardaliaguet and
Rainer \cite{BCR}.
\begin{definition}\label{d2}
A couple $(e_{1},e_{2})\in\mathbb{R}^{2}$ is called a Nash
equilibrium payoff at the point $(t,x)$ if for any $\varepsilon>0$,
there exists $(\alpha_{\varepsilon},\beta_{\varepsilon})\in \mathcal
{A}_{t,T}\times \mathcal {B}_{t,T}$ such that, for all
$(\alpha,\beta)\in \mathcal {A}_{t,T}\times \mathcal {B}_{t,T},$
\begin{eqnarray}\label{eq4}
J_{1}(t,x;\alpha_{\varepsilon},\beta_{\varepsilon})\geq
J_{1}(t,x;\alpha,\beta_{\varepsilon})-\varepsilon,\
J_{2}(t,x;\alpha_{\varepsilon},\beta_{\varepsilon})\geq
J_{2}(t,x;\alpha_{\varepsilon},\beta)-\varepsilon,\
\mathbb{P}-a.s.,
\end{eqnarray}
and
\begin{eqnarray*}
|\mathbb{E}[J_{j}(t,x;\alpha_{\varepsilon},\beta_{\varepsilon})]-e_{j}|\leq
\varepsilon, \ j=1,2.
\end{eqnarray*}
\end{definition}

\begin{remark}\label{}
We attract the reader's attention to the fact that
$J_{j}(t,x;\alpha,\beta), j=1,2,$ are random variables. In our
existence result (Theorem \ref{t2}) we will construct cost
functionals $J_{j}(t,x;\alpha_{\varepsilon},\beta_{\varepsilon}),
\varepsilon>0, j=1,2,$ which are deterministic.
\end{remark}

By virtue of Lemma \ref{l2} we can easily get the following lemma.
\begin{lemma}\label{le3}
For any $\varepsilon>0$ and
$(\alpha_{\varepsilon},\beta_{\varepsilon})\in \mathcal
{A}_{t,T}\times \mathcal {B}_{t,T}$, (\ref{eq4}) holds if and only
if,  for all $(u,v)\in \mathcal {U}_{t,T}\times \mathcal {V}_{t,T}$,
\begin{eqnarray}\label{eq5}
J_{1}(t,x;\alpha_{\varepsilon},\beta_{\varepsilon})\geq
J_{1}(t,x;u,\beta_{\varepsilon}(u))-\varepsilon,\
J_{2}(t,x;\alpha_{\varepsilon},\beta_{\varepsilon})\geq
J_{2}(t,x;\alpha_{\varepsilon}(v),v)-\varepsilon, \ \mathbb{P}-a.s.
\end{eqnarray}
\end{lemma}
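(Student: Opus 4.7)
The plan is to establish both implications separately, relying on two ingredients: the identification between strategy pairs and control pairs provided by Lemma \ref{l2}, and the observation that every admissible control process gives rise to a \emph{constant} NAD strategy. This reduces the lemma to unwinding the definitions.

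For the direction (\ref{eq4}) $\Rightarrow$ (\ref{eq5}), I will fix an arbitrary $u\in\mathcal{U}_{t,T}$ and define the constant map $\alpha_u(v):=u$ for every $v\in\mathcal{V}_{t,T}$. I first need to verify that $\alpha_u$ belongs to $\mathcal{A}_{t,T}$; this reduces to checking Definition \ref{d1} with the trivial partition $S_0(v)\equiv t$ and $S_n(v)\equiv T$ for $n\geq 1$, since nonanticipativity and conditions (iv)--(v) hold automatically when the output is independent of $v$. Having done this, I apply Lemma \ref{l2} to the pair $(\alpha_u,\beta_\varepsilon)$: the unique associated $(u',v')\in\mathcal{U}_{t,T}\times\mathcal{V}_{t,T}$ must satisfy $u'=\alpha_u(v')=u$ and $v'=\beta_\varepsilon(u')=\beta_\varepsilon(u)$, so that $J_1(t,x;\alpha_u,\beta_\varepsilon)=J_1(t,x;u,\beta_\varepsilon(u))$. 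Plugging $\alpha=\alpha_u$ into the first part of (\ref{eq4}) then yields the first inequality of (\ref{eq5}). The symmetric argument using a constant strategy $\beta_v(u):=v\in\mathcal{B}_{t,T}$ for any fixed $v\in\mathcal{V}_{t,T}$ delivers the second inequality.

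For the converse (\ref{eq5}) $\Rightarrow$ (\ref{eq4}), I will fix an arbitrary $\alpha\in\mathcal{A}_{t,T}$ and let $(u,v)\in\mathcal{U}_{t,T}\times\mathcal{V}_{t,T}$ be the unique couple associated with $(\alpha,\beta_\varepsilon)$ via Lemma \ref{l2}, which in particular gives $v=\beta_\varepsilon(u)$. By the very definition $J_1(t,x;\alpha,\beta_\varepsilon):=J_1(t,x;u,v)$ adopted after Lemma \ref{l2}, I obtain
\[
J_1(t,x;\alpha,\beta_\varepsilon)=J_1(t,x;u,\beta_\varepsilon(u)),
\]
and applying the first inequality of (\ref{eq5}) for this particular $u$ yields the first inequality of (\ref{eq4}). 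The argument for the $J_2$-inequality is entirely analogous, using the unique pair associated with $(\alpha_\varepsilon,\beta)$ for arbitrary $\beta\in\mathcal{B}_{t,T}$.

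The only step that is not purely tautological is the verification that constant maps are indeed NAD strategies in the sense of Definition \ref{d1}. This is essentially immediate from the trivial choice of stopping times indicated above, but it is the linchpin of the argument, since without it one could not reduce the quantifier over strategies in (\ref{eq4}) to the quantifier over controls in (\ref{eq5}).
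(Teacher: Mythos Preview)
Your proof is correct and follows exactly the route the paper indicates: the paper itself gives no detailed argument, stating only that the lemma follows ``by virtue of Lemma \ref{l2}''. Your write-up simply makes explicit the two ingredients implicit in that sentence---identifying controls with constant NAD strategies for one implication, and invoking the unique fixed-point couple from Lemma \ref{l2} for the other---so there is nothing to add or compare.
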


We now give the characterization theorem of Nash equilibrium payoffs
for nonzero-sum stochastic differential games.
\begin{theorem}\label{t1}
Let $(t,x)\in [0,T]\times \mathbb{R}^{n}$. Under Isaacs condition
(\ref{Isaacs}),  $(e_{1},e_{2})\in\mathbb{R}^{2}$ is a Nash
equilibrium payoff at point $(t,x)$ if and only if for all
$\varepsilon>0$, there exist
 $(u^{\varepsilon},v^{\varepsilon})\in \mathcal {U}_{t,T}\times \mathcal
 {V}_{t,T}$  such that for all $s\in[t,T]$ and $j=1,2,$\\
\begin{eqnarray}\label{eq6}
\mathbb{P}\Big(\ ^{j}Y^{t,x;u^{\varepsilon},v^{\varepsilon}}_{s}\geq
W_{j}(s,X^{t,x;u^{\varepsilon},v^{\varepsilon}}_{s})-\varepsilon\ |\
\mathcal {F}_{t}\Big)\geq 1-\varepsilon,\ \mathbb{P}-a.s.,
\end{eqnarray}
and
\begin{eqnarray}\label{eq7}
|\mathbb{E}[J_{j}(t,x;u^{\varepsilon},v^{\varepsilon})]-
e_{j}|\leq\varepsilon.
\end{eqnarray}
\end{theorem}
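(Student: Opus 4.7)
The plan is to prove both directions separately. The necessity direction will rely on the dynamic programming principle (Proposition~\ref{p1}) together with the comparison principle (Lemma~\ref{l1}), while the sufficiency direction rests on a ``play the recommended controls and punish deviations'' construction of NAD strategies from the given $(u^{\varepsilon},v^{\varepsilon})$.

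\medskip
\textbf{Necessity.} Assume $(e_{1},e_{2})$ is a Nash equilibrium payoff. For each $\varepsilon>0$, take $(\alpha_{\varepsilon},\beta_{\varepsilon})$ provided by Definition~\ref{d2} and apply Lemma~\ref{l2} to obtain the unique associated pair $(u^{\varepsilon},v^{\varepsilon})\in\mathcal{U}_{t,T}\times\mathcal{V}_{t,T}$ with $(\alpha_{\varepsilon}(v^{\varepsilon}),\beta_{\varepsilon}(u^{\varepsilon}))=(u^{\varepsilon},v^{\varepsilon})$; the expectation condition in Definition~\ref{d2} then yields (\ref{eq7}) directly. To obtain (\ref{eq6}) for $j=1$, I would argue by contradiction: if on a set of positive $\mathcal{F}_{t}$-conditional probability exceeding $\varepsilon$ one had $^{1}Y^{t,x;u^{\varepsilon},v^{\varepsilon}}_{s}<W_{1}(s,X^{t,x;u^{\varepsilon},v^{\varepsilon}}_{s})-\varepsilon$, construct a deviation $\widetilde{\alpha}\in\mathcal{A}_{t,T}$ that coincides with $\alpha_{\varepsilon}$ on $[[t,s]]$ and, after $s$, applies an $(\varepsilon/3)$-optimal NAD strategy for Player~I in the zero-sum subgame with value $W_{1}$ started from $(s,X^{t,x;u^{\varepsilon},v^{\varepsilon}}_{s})$. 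Using the backward semigroup identity for $J_{1}$, the DPP, and the comparison of Lemma~\ref{l1}, this $\widetilde{\alpha}$ satisfies $J_{1}(t,x;\widetilde{\alpha},\beta_{\varepsilon})\geq J_{1}(t,x;u^{\varepsilon},v^{\varepsilon})+c\varepsilon$ for some $c>1$, contradicting (\ref{eq5}). The symmetric argument for $j=2$ uses the saddle identity (\ref{equa}) of $W_{2}$.

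\medskip
\textbf{Sufficiency.} Given $(u^{\varepsilon},v^{\varepsilon})$ satisfying (\ref{eq6}) and (\ref{eq7}), I would define $\alpha_{\varepsilon}(v)$ to play $u^{\varepsilon}$ as long as the input $v$ agrees with $v^{\varepsilon}$, testing agreement at the delay times $S_{n}$ of Definition~\ref{d1}; from the first detection time $\tau$ onward, $\alpha_{\varepsilon}(v)$ switches to an $\varepsilon$-optimal NAD strategy for Player~I in the zero-sum game whose value is $W_{2}$ (in which Player~I minimizes $J_{2}$), started from $(\tau,X_{\tau})$. Define $\beta_{\varepsilon}$ symmetrically. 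For any deviation $v$, let $\tau$ be the associated detection time; before $\tau$, the state trajectory under $(\alpha_{\varepsilon}(v),v)$ coincides with that under $(u^{\varepsilon},v^{\varepsilon})$, so by (\ref{eq6}), $^{2}Y^{t,x;u^{\varepsilon},v^{\varepsilon}}_{\tau}\geq W_{2}(\tau,X_{\tau})-\varepsilon$ on an event of $\mathcal{F}_{t}$-conditional probability at least $1-\varepsilon$. After $\tau$, the punishment delivers $^{2}Y^{t,x;\alpha_{\varepsilon}(v),v}_{\tau}\leq W_{2}(\tau,X_{\tau})+\varepsilon$. Combined with the semigroup property and the stability estimate of Lemma~\ref{l3} (using the boundedness from $(H3.5)$), this gives $J_{2}(t,x;\alpha_{\varepsilon}(v),v)\leq J_{2}(t,x;u^{\varepsilon},v^{\varepsilon})+C\varepsilon$; the symmetric bound for Player~I and Lemma~\ref{le3} establish (\ref{eq4}), while (\ref{eq7}) transfers immediately to the expectation condition of Definition~\ref{d2}.

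\medskip
\textbf{Main obstacle.} The principal difficulty is the sufficiency direction and has two layers. First, the punishment NAD strategies must be spliced with the recommended controls in a way that preserves every property i)--v) of Definition~\ref{d1}: the delay sequence of $\alpha_{\varepsilon}$ must dominate both the detection times of deviations and the delay sequences of the post-$\tau$ punishment strategies applied at the random starting point $(\tau,X_{\tau})$, and $\varepsilon$-optimality of those punishment strategies at every such random point must be argued from (\ref{equa}) together with a measurable selection. Second, the BSDE estimates must be uniform in the random detection time $\tau$ and carried out jointly on the detection event and its complement, combining the $\mathcal{F}_{t}$-conditional bound in (\ref{eq6}), the Lipschitz continuity of $W_{2}$ from Lemma~\ref{le5}, and Lemma~\ref{l3}, so that the final $O(\varepsilon)$ loss can be absorbed into a redefined $\varepsilon$.
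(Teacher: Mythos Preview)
Your overall strategy coincides with the paper's, but both directions contain quantitative gaps that the paper closes with specific devices you have not invoked.

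\textbf{Necessity.} The claim that the deviation $\widetilde{\alpha}$ yields $J_{1}(t,x;\widetilde{\alpha},\beta_{\varepsilon})\geq J_{1}(t,x;u^{\varepsilon},v^{\varepsilon})+c\varepsilon$ with $c>1$ is too strong. On the event $A$ you gain $\varepsilon/2$ at time $s$, but $A$ has $\mathcal{F}_t$-conditional probability only $>\varepsilon$, and propagating the gain back to time $t$ through the \emph{nonlinear} BSDE requires a Girsanov-type linearization: the paper obtains $\overline{y}_t=\frac{\varepsilon}{2}\,\mathbb{E}[1_A Q_{t+\delta}\,|\,\mathcal{F}_t]$ for a bounded exponential density $Q$ and then uses Schwarz to get $\mathbb{E}[1_A Q_{t+\delta}\,|\,\mathcal{F}_t]\geq \mathbb{P}(A|\mathcal{F}_t)^2/C\geq \varepsilon^2/C$. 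The resulting gain is thus of order $\varepsilon^{3}$, not $\varepsilon$. To close the contradiction the paper applies Definition~\ref{d2} with tolerance $\varepsilon^{4}$ (see (\ref{e9})) and uses $\tfrac{C_0}{2}\varepsilon^{3}>\varepsilon^{4}$ for small $\varepsilon$. Without this rescaling your contradiction does not close.

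\textbf{Sufficiency.} The assertion that ``before $\tau$, the state trajectory under $(\alpha_{\varepsilon}(v),v)$ coincides with that under $(u^{\varepsilon},v^{\varepsilon})$'' is false: Player~II may deviate strictly before the first delay time at which the deviation is detected, so on the interval between the true first-deviation time $S^{v}$ and your detection time the two trajectories already differ. The paper does not test along the abstract delay times of Definition~\ref{d1}; it fixes a deterministic partition $t=t_{0}<\cdots<t_{m}=T$ of mesh $\tau$, sets $t^{v}$ to be the first partition point after $S^{v}$, and controls the discrepancy on $[S^{v},t^{v}]$ via Lemma~\ref{le2}, producing an error $C\tau^{1/2}$. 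A second consequence of your formulation is that you invoke (\ref{eq6}) at the \emph{random} time $\tau$, whereas (\ref{eq6}) is only assumed for each deterministic $s$; the paper circumvents this by applying (\ref{e12}) at the finitely many fixed times $t_{i}$ and summing, which yields the additional error $Cm\varepsilon_{0}^{1/2}$. Choosing first the mesh $\tau$ and then $\varepsilon_{0}$ small relative to $m$ absorbs all terms into $\varepsilon$. Your ``Main obstacle'' paragraph gestures at uniformity in the detection time but does not identify either of these two concrete mechanisms.
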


\begin{remark}
The above Theorem generalizes the results of \cite{BCR} and \cite{R}
from the case of classical cost functionals without running costs to
nonlinear cost functionals which running cost $f_{j}, j=1,2,$ depend
on $(y,z).$ Moreover, in our framework the controls can depend on
events occurring before time $t$.
\end{remark}

We prepare the proof of this Theorem by the following two lemmas.
\begin{lemma}\label{le1}
Let $(t,x)\in [0,T]\times \mathbb{R}^{n}$ and $u\in \mathcal
{U}_{t,T}$ be arbitrarily fixed. Then,

(i) for all $\delta\in[0,T-t]$ and $\varepsilon>0$,  there exists an
NAD strategy $\alpha\in \mathcal {A}_{t,T}$ such that, for all
 $v\in \mathcal {V}_{t,T}$,
\begin{eqnarray*}\label{}
\alpha(v)&=&u, \text{on}\ [t,t+\delta],\\
^{2}Y^{t,x;\alpha(v),v}_{t+\delta}&\leq&
W_{2}(t+\delta,X^{t,x;\alpha(v),v}_{t+\delta})+\varepsilon,\
\mathbb{P}-a.s.
\end{eqnarray*}

(ii) for all $\delta\in[0,T-t]$ and $\varepsilon>0$,  there exists an
NAD strategy  $\alpha\in \mathcal {A}_{t,T}$ such that, for all
 $v\in \mathcal {V}_{t,T}$,
\begin{eqnarray*}\label{}
\alpha(v)&=&u, \text{on}\ [t,t+\delta],\\
^{1}Y^{t,x;\alpha(v),v}_{t+\delta}&\geq&
W_{1}(t+\delta,X^{t,x;\alpha(v),v}_{t+\delta})-\varepsilon, \
\mathbb{P}-a.s.
\end{eqnarray*}
\end{lemma}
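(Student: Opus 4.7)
The plan is to prove part (i); part (ii) is symmetric, using the identity $W_1 = \esssup_\alpha \essinf_\beta J_1 = \essinf_\beta \esssup_\alpha J_1$ from (\ref{equa}) in place of the analogous representation of $W_2$. The overall idea is for Player I to play the fixed $u$ on $[t,t+\delta]$ and, from time $t+\delta$ onwards, switch to an $\varepsilon$-optimal response for Player II's cost that depends measurably on the realized state $X^{t,x;u,v}_{t+\delta}$. Concretely, I would first fix a countable Borel partition $(O_i)_{i\geq 1}$ of $\mathbb{R}^n$ with representatives $x_i\in O_i$ and diameter bounded by $\eta:=\varepsilon/(4C)$, where $C$ is a common constant dominating the Lipschitz constants from Proposition \ref{p3} and Lemma \ref{le5}. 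Using $W_2(t+\delta,x_i) = \essinf_{\tilde\alpha\in\mathcal{A}_{t+\delta,T}} \esssup_{\tilde\beta\in\mathcal{B}_{t+\delta,T}} J_2(t+\delta,x_i;\tilde\alpha,\tilde\beta)$ together with the directed-downward character of the family of upper costs under NAD-pasting on $\mathcal{F}_{t+\delta}$-measurable events, I would pick for each $i\geq 1$ an NAD strategy $\alpha_i\in\mathcal{A}_{t+\delta,T}$ with
\[
\esssup_{\tilde\beta\in\mathcal{B}_{t+\delta,T}} J_2(t+\delta,x_i;\alpha_i,\tilde\beta) \leq W_2(t+\delta,x_i)+\varepsilon/2, \quad \mathbb{P}\text{-a.s.}
\]

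Next I would assemble the candidate $\alpha$ by the pasting
\[
\alpha(v)_s := u_s \mathbf{1}_{[t,t+\delta]}(s) + \sum_{i\geq 1} \alpha_i(v|_{[t+\delta,T]})_s\, \mathbf{1}_{\{X^{t,x;u,v}_{t+\delta}\in O_i\}} \mathbf{1}_{(t+\delta,T]}(s).
\]
Since $\mathbf{1}_{\{X^{t,x;u,v}_{t+\delta}\in O_i\}}$ is $\mathcal{F}_{t+\delta}$-measurable and determined by $v|_{[t,t+\delta]}$, the NAD property of $\alpha$ is to be checked by setting $S_0(v)=t$, $S_1(v)=t+\delta$, and $S_{n+1}(v)=\sum_i S^{(i)}_n(v|_{[t+\delta,T]})\mathbf{1}_{\{X^{t,x;u,v}_{t+\delta}\in O_i\}}$ for $n\geq 1$, where $\{S^{(i)}_k\}_k$ is the NAD stopping-time sequence associated with $\alpha_i$. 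Items i)--v) in Definition \ref{d1} then follow from the nonanticipativity of the map $v|_{[t,t+\delta]}\mapsto X^{t,x;u,v}_{t+\delta}$ and the NAD properties of the individual $\alpha_i$.

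For the key estimate, fix $v\in\mathcal{V}_{t,T}$ and set $v':=v|_{[t+\delta,T]}$, $\xi:=X^{t,x;u,v}_{t+\delta}=X^{t,x;\alpha(v),v}_{t+\delta}$. The flow property of (\ref{eq1}) together with uniqueness for the BSDE (\ref{BSDE}) gives, on $\{\xi\in O_i\}$,
\[
{}^{2}Y^{t,x;\alpha(v),v}_{t+\delta} = J_2(t+\delta,\xi;\alpha_i(v'),v').
\]
Viewing $v'$ as the constant NAD strategy $\bar\beta\equiv v'$ in $\mathcal{B}_{t+\delta,T}$ (Lemma \ref{l2} then returns the pair $(\alpha_i(v'),v')$), the Lipschitz estimate of Proposition \ref{p3}, the selection of $\alpha_i$, and the Lipschitz continuity of $W_2$ in Lemma \ref{le5} chain together to yield, on $\{\xi\in O_i\}$,
\[
J_2(t+\delta,\xi;\alpha_i(v'),v') \leq W_2(t+\delta,x_i)+\varepsilon/2+C\eta \leq W_2(t+\delta,\xi)+\varepsilon,
\]
$\mathbb{P}$-a.s., which is the desired bound since $(O_i)$ partitions $\mathbb{R}^n$.

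The hardest steps are expected to be (a) the measurable selection of a single NAD strategy $\alpha_i$ achieving the pointwise a.s. $\varepsilon/2$-optimal bound for the essinf, which requires the directed-downward pasting argument exploiting the $L^\infty$ bounds from (H3.5); and (b) the bookkeeping in the NAD verification, where the stopping-time sequences of the countably many $\alpha_i$ must be reconciled via the $\mathcal{F}_{t+\delta}$-measurable partition $(\{X^{t,x;u,v}_{t+\delta}\in O_i\})_{i\geq 1}$. Part (ii) proceeds identically: replace $\esssup_{\tilde\beta}$ by $\essinf_{\tilde\beta}$ in the selection of $\alpha_i$ (using the directed-upward family for the second representation of $W_1$) and reverse the inequality throughout the estimate.
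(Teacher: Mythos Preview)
Your proposal is correct and follows essentially the same approach as the paper: select, for each cell $O_i$ of a fine Borel partition of $\mathbb{R}^n$, an $\varepsilon$-optimal NAD strategy $\alpha_{x_i}\in\mathcal{A}_{t+\delta,T}$ via the representation of $W_2$ in (\ref{equa}), paste these after $u$ on $[t,t+\delta]$ via the $\mathcal{F}_{t+\delta}$-measurable indicators $\mathbf{1}_{\{X^{t,x;u,v}_{t+\delta}\in O_i\}}$, verify the NAD property by concatenating the stopping-time sequences of the $\alpha_{x_i}$ with $S_1=t+\delta$, and conclude using the Lipschitz estimates of Proposition~\ref{p3} and Lemma~\ref{le5}. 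The only cosmetic difference is that the paper first passes from $\esssup_{\beta\in\mathcal{B}_{t+\delta,T}}$ to $\esssup_{v\in\mathcal{V}_{t+\delta,T}}$ before selecting $\alpha_{x_i}$ (via the embedding $\mathcal{V}_{t+\delta,T}\subset\mathcal{B}_{t+\delta,T}$), whereas you keep the sup over strategies and then specialize to $\bar\beta\equiv v'$; both routes give the same bound.
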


\begin{proof}
We only give the proof of (i). Indeed, (ii) can be proved by a
symmetric argument. We begin with observing that putting
$\beta^{v'}(u')=v'$, for all $u'\in \mathcal {U}_{t+\delta,T}$,
defines for every $v'\in \mathcal {V}_{t+\delta,T}$  an element
$\beta^{v'}\in \mathcal {B}_{t+\delta,T}$ and allows to regard
$\mathcal {V}_{t+\delta,T}$ as a subset of $\mathcal
{B}_{t+\delta,T}$. Consequently, from (\ref{equa}), for any $y\in
\mathbb{R}^{n}$, we have
\begin{eqnarray*}\label{}
W_{2}(t+\delta,y)&=&\essinf_{\alpha\in\mathcal
{A}_{t+\delta,T}}\esssup_{\beta\in\mathcal {B}_{t+\delta,T}}
J_{2}(t+\delta,y;\alpha,\beta)\\&\geq&\essinf_{\alpha\in\mathcal
{A}_{t+\delta,T}}\esssup_{v\in\mathcal {V}_{t+\delta,T}}
J_{2}(t+\delta,y;\alpha(v),v), \ \mathbb{P}-a.s.
\end{eqnarray*}
Therefore, for $\varepsilon_{0}>0$, there exists $\alpha_{y}\in
\mathcal {A}_{t+\delta,T}$ such that
\begin{eqnarray}\label{eqn1}
W_{2}(t+\delta,y)&\geq& \esssup_{v\in\mathcal
{V}_{t+\delta,T}}J_{2}(t+\delta,y; \alpha_{y}(v),v)-\varepsilon_{0},
\ \mathbb{P}-a.s.
\end{eqnarray}
(The existence of $\alpha_{y}\in \mathcal {A}_{t+\delta,T}$  can be
shown with the techniques used in the proof of Lemma 3.8 in Buckdahn
and Li \cite{BL2006}).

Let $\{O_{i}\}_{i\geq1}\subset\mathcal {B}(\mathbb{R}^{n})$  be a
partition of $\mathbb{R}^{n}$ such that
$\sum\limits_{i\geq1}O_{i}=\mathbb{R}^{n}, O_{i}\neq\emptyset,$ and
$\text{diam}(O_{i})\leq\varepsilon_{0},i\geq1$. Let $y_{i}\in O_{i},
i\geq 1$. We put, for $v\in\mathcal {V}_{t,T}$,
\begin{equation}\label{equation}
\alpha(v)_{s}=\left\{
\begin{array}{rl}
 u_s, \qquad \qquad\qquad\qquad s\in [t, t+\delta]
,\\
\sum\limits_{i\geq1}1_{\{X^{t,x;u,v}_{t+\delta}\in
O_{i}\}}\alpha_{y_{i}}(v|_{[t+\delta,T]})_{s}, s\in (t+\delta,T].
\end{array}
\right.
\end{equation}
The such introduced mapping  $\alpha:\mathcal {V}_{t,T}\rightarrow
\mathcal {U}_{t,T}$ is an NAD strategy. Indeed,\\

 $(i)$ {\it The mapping $\alpha$ is
nonanticipative}. Proof: For every $(\mathcal {F}_{r})$-stopping
time $\tau:\Omega\rightarrow [t,T],$ and for
 $v_{1},v_{2} \in\mathcal {V}_{t,T}$ with $v_{1}=v_{2}$   on $[[ t,\tau]]$,
 we decompose $v_{1},v_{2}$ into $v_{1}^{1},v_{2}^{1}\in\mathcal {V}_{t,t+\delta}$
 and $v_{1}^{2},v_{2}^{2}\in\mathcal {V}_{t+\delta,T}$ such that
$v_{i}^{1}= v_{i}$ on $[t,t+\delta]$, and $v_{i}^{2}= v_{i}$ on
$[t+\delta,T], i=1,2.$ In order to abbreviate, we will write for
this:
 $v_{1}=v_{1}^{1}\oplus v_{1}^{2}$  and $v_{2}=v_{2}^{1}\oplus v_{2}^{2}$.  Then we have
 $v_{1}^{1}= v_{2}^{1}$  on $[[ t,\tau\wedge(t+\delta)]]$ and
 $v_{1}^{2}= v_{2}^{2}$ on $[[ \tau\wedge(t+\delta), \tau]]$. It is obvious
 that  $\alpha(v_{1})=u=\alpha(v_{2})$ on $[t,t+\delta]$ and, hence, also on $[[ t,\tau\wedge(t+\delta)]]$.
Since $v_{1}^{1}= v_{2}^{1}$ on $[[ t,\tau\wedge(t+\delta)]]$, we
have
$X^{t,x;\alpha(v_{1}^{1}),v_{1}^{1}}_{t+\delta}=X^{t,x;\alpha(v^{1}_{2}),v^{1}_{2}}_{t+\delta}$
on $\{\tau>t+\delta\}, \mathbb{P}-a.s.$ Therefore, because of the
nonanticipativity  of $\alpha_{y_{i}}, i\geq 1,$
$$\alpha(v_{1})=\sum\limits_{i\geq1}1_{\{X^{t,x;\alpha(v_{1}^{1}),v_{1}^{1}}_{t+\delta}\in
O_{i}\}}
\alpha_{y_{i}}(v_{1}^{2})=\sum\limits_{i\geq1}1_{\{X^{t,x;\alpha(v_{2}^{1}),v_{2}^{1}}_{t+\delta}\in
O_{i}\}} \alpha_{y_{i}}(v_{2}^{2})=\alpha(v_{2})$$ on $[[
\tau\wedge(t+\delta), \tau]]$.\\

 $(ii)$  { \it The mapping  $\alpha$ is a nonanticipative strategy with
delay.}  Proof: For $v=v^{1}\oplus v^{2}\in\mathcal
{V}_{t,t+\delta}\times\mathcal {V}_{t+\delta,T}$, we have
\begin{eqnarray*}\label{}
\alpha(v)=u\oplus\sum\limits_{i\geq1}1_{\{X^{t,x;u,v^{1}}_{t+\delta}\in
O_{i}\}}\alpha_{y_{i}}(v^{2}).
\end{eqnarray*}
Let $\{S_{n}^{i}(v^{2})\}_{n\geq 1}$ be the sequence of the stopping
times
 associated with $\alpha_{y_{i}}\in \mathcal {A}_{t+\delta,T}$ in the sense of
 the Definition \ref{d1}.  Then, putting  $S_{0}=t, S_{1}=t+\delta$,
\begin{eqnarray*}\label{}
S_{n+1}(v)=\sum\limits_{i\geq1}1_{\{X^{t,x;u,v^{1}}_{t+\delta}\in
O_{i}\}} S_{n}^{i}(v^{2}), n\geq1.
\end{eqnarray*}
We have that  $\{S_{n}(v)\}_{n\geq 1}$  satisfies the condition 2)
in Definition \ref{d1}. Thus, $\alpha$ is  a nonanticipative
strategy with delay.

From Lemma \ref{le5}, (\ref{eqn1}) and (\ref{equation}) it follows
that, for $v\in\mathcal {V}_{t,T}$,
\begin{eqnarray*}\label{}
&&W_{2}(t+\delta,X^{t,x;\alpha(v),v}_{t+\delta})\\
&\geq&
\sum\limits_{i\geq1}1_{\{X^{t,x;\alpha(v),v}_{t+\delta}\in
O_{i}\}}W_{2}(t+\delta,y_{i})-C\varepsilon_{0}\\
&\geq& \sum\limits_{i\geq1}1_{\{X^{t,x;\alpha(v),v}_{t+\delta}\in
O_{i}\}} J_{2}(t+\delta,y_{i};
\alpha_{y_{i}}(v|_{[t+\delta,T]}),v)-C\varepsilon_{0}\\
&=& \sum\limits_{i\geq1}1_{\{X^{t,x;\alpha(v),v}_{t+\delta}\in
O_{i}\}} J_{2}(t+\delta,y_{i};\alpha(v),v)-C\varepsilon_{0}.
\end{eqnarray*}
Thus, from Proposition \ref{p3},
\begin{eqnarray*}\label{}
W_{2}(t+\delta,X^{t,x;\alpha(v),v}_{t+\delta}) &\geq&
\sum\limits_{i\geq1}1_{\{X^{t,x;\alpha(v),v}_{t+\delta}\in O_{i}\}}
J_{2}(t+\delta,X^{t,x;\alpha(v),v}_{t+\delta};
\alpha(v),v)-C\varepsilon_{0}\\
&=&  J_{2}(t+\delta,X^{t,x;\alpha(v),v}_{t+\delta};
\alpha(v),v)-C\varepsilon_{0}.
\end{eqnarray*}
Here $C$ is a constant which can vary from line to line, but which
is independent of $v\in\mathcal {V}_{t,T}$.

 Putting
$\varepsilon_{0}=\varepsilon C^{-1}$ in the latter estimate, we
obtain
\begin{eqnarray*}\label{}
W_{2}(t+\delta,X^{t,x;\alpha(v),v}_{t+\delta})&\geq&
 J_{2}(t+\delta,X^{t,x;\alpha(v),v}_{t+\delta};
\alpha(v),v)-\varepsilon, v\in\mathcal {V}_{t,T}.
\end{eqnarray*}
The proof is complete.
\end{proof}\\

The proof of Theorem \ref{t1} necessitates the following lemma.

\begin{lemma}\label{le2}
There exists a positive constant  $C$ such that, for all
$(u,v),(u',v')\in \mathcal {U}_{t,T}\times \mathcal {V}_{t,T}$, and
for all $\mathcal {F}_{r}$-stopping times $S:\Omega\rightarrow[t,T]$
with $X^{t,x;u,v}_{S}=X^{t,x;u',v'}_{S}$, $\mathbb{P}-a.s.,$ it
holds,  for all real $\tau\in[t,T],$

\begin{eqnarray*}\label{}
\mathbb{E}[\sup\limits_{0\leq s\leq \tau}|X^{t,x;u,v}_{(S+s)\wedge
T}-X^{t,x;u',v'}_{(S+s)\wedge T}|^{2} \Big|\mathcal {F}_{t}]\leq
C\tau, \ \mathbb{P}-a.s.
\end{eqnarray*}
\end{lemma}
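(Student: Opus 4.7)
The plan is to write the SDE governing the difference process after the stopping time $S$, bound its conditional second moment via Burkholder--Davis--Gundy (BDG) and Cauchy--Schwarz, and close the resulting integral inequality by Gronwall.

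Write $X_r := X^{t,x;u,v}_r$, $X'_r := X^{t,x;u',v'}_r$ and $\Delta_r := X_r - X'_r$. Since $\Delta_S = 0$ a.s., equation (\ref{eq1}) gives, for every $s\ge 0$,
\begin{equation*}
\Delta_{(S+s)\wedge T} = \int_S^{(S+s)\wedge T}\bigl(b(r,X_r,u_r,v_r) - b(r,X'_r,u'_r,v'_r)\bigr)dr + \int_S^{(S+s)\wedge T}\bigl(\sigma(r,X_r,u_r,v_r) - \sigma(r,X'_r,u'_r,v'_r)\bigr)dB_r.
\end{equation*}
I would split each coefficient difference by the triangle inequality into a Lipschitz part, bounded by $L|\Delta_r|$ via $(H3.2)$, and a controls-differ part, bounded by $2L(1+|X'_r|)$ via the linear growth of $b$ and $\sigma$ inherent in $(H3.2)$. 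Applying the conditional Cauchy--Schwarz inequality to the Lebesgue integral and conditional BDG to the stochastic integral (both valid with random limit $S$ since $S$ is an $\mathbb{F}$-stopping time), and invoking Lemma \ref{l4} to bound $K := \mathbb{E}[\sup_{r\in[t,T]}(1+|X'_r|)^2\,|\,\mathcal{F}_t]\le C(1+|x|^2)$, I obtain for $\varphi(\tau) := \mathbb{E}[\sup_{0\le s\le\tau}|\Delta_{(S+s)\wedge T}|^2\,|\,\mathcal{F}_t]$ the estimate
\begin{equation*}
\varphi(\tau) \le C_{1}\,\mathbb{E}\Bigl[\int_S^{(S+\tau)\wedge T}|\Delta_r|^2\,dr\,\Big|\,\mathcal{F}_t\Bigr] + C_{2}\,\tau,
\end{equation*}
where $C_1, C_2$ depend only on $L$, $T$ and $|x|$.

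Next, via the change of variable $r = S+u$ and conditional Fubini, together with the pointwise bound $|\Delta_{S+u}|^{2}\mathbf{1}_{\{S+u\le T\}}\le \sup_{0\le s\le u}|\Delta_{(S+s)\wedge T}|^2$, the right-hand integral is dominated by $\int_0^\tau \varphi(u)\,du$. Thus $\varphi(\tau)\le C_{2}\tau + C_{1}\int_0^\tau \varphi(u)\,du$, and Gronwall's lemma yields $\varphi(\tau)\le C_{2}\tau\,e^{C_{1}T}$, which is the desired estimate with $C := C_{2}e^{C_{1}T}$.

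The only delicate points, and the main obstacles to be handled carefully, are (a) keeping track of the random lower limit $S$ when applying BDG (for which one uses the optional stopping applied to the $L^2$ martingale $\int_t^\cdot(\sigma-\sigma')dB_r$), and (b) justifying the Fubini exchange under conditional expectation that converts $\mathbb{E}[\int_S^{(S+\tau)\wedge T}|\Delta_r|^2\,dr\,|\,\mathcal{F}_t]$ into $\int_0^\tau \varphi(u)\,du$; both are standard once the monotonicity of $\varphi$ and the measurability of $S$ with respect to $\mathbb{F}$ are taken into account.
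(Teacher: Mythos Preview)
Your argument is correct. The paper, however, dispatches the lemma in a single sentence: it simply invokes the boundedness of $b$ and $\sigma$, under which $|b(r,X_r,u_r,v_r)-b(r,X'_r,u'_r,v'_r)|$ and the analogous $\sigma$-difference are bounded by a fixed constant $2M$, so BDG and Cauchy--Schwarz on the interval $[S,(S+\tau)\wedge T]$ give $\varphi(\tau)\le C\tau$ immediately, with no Gronwall loop and no splitting into a Lipschitz part and a controls-differ part.

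Your route is genuinely different: you work only from the Lipschitz hypothesis $(H3.2)$ and the linear growth it implies, feed the controls-differ part through Lemma~\ref{l4} to produce the $C_2\tau$ term, and then absorb the residual $\int|\Delta_r|^2$ term by Gronwall. This costs you an $|x|$-dependent constant but buys you a proof that does not rely on the boundedness of $b,\sigma$ --- an assumption the paper invokes in its one-line argument but does not list among $(H3.1)$--$(H3.2)$. In that sense your proof is the more careful one relative to the stated hypotheses; the paper's is the shorter one once boundedness is granted.
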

This lemma is the result of a straight forward estimate using the
fact that $b$ and $\sigma$ are bounded.\\

Let us  give now the proof of Theorem \ref{t1}.\\

{\bf Proof of Theorem \ref{t1}: \underline {Sufficiency}} of
(\ref{eq6}) and (\ref{eq7}).\\

\begin{proof}
Let  $\varepsilon>0$ be arbitrarily fixed. For $\varepsilon_{0}>0$
being specified later we suppose that
$(u^{\varepsilon_{0}},v^{\varepsilon_{0}})\in\mathcal
{U}_{t,T}\times \mathcal {V}_{t,T}$ satisfies (\ref{eq6}) and
(\ref{eq7}), i.e., for all $s\in[t,T]$ and $j=1,2,$
\begin{eqnarray}\label{e12}
\mathbb{P}\Big(\
^{j}Y^{t,x;u^{\varepsilon_{0}},v^{\varepsilon_{0}}}_{s}\geq
W_{j}(s,X^{t,x;u^{\varepsilon_{0}},v^{\varepsilon_{0}}}_{s})-\varepsilon_{0}\
|\ \mathcal {F}_{t}\Big)\geq 1-\varepsilon_{0}, \ \mathbb{P}-a.s.,
\end{eqnarray}
and
\begin{eqnarray}\label{e13}
|\mathbb{E}[J_{j}(t,x;u^{\varepsilon_{0}},v^{\varepsilon_{0}})]-
e_{j}|\leq\varepsilon_{0}.
\end{eqnarray}
Let us  fix some partition: $t=t_{0}\leq t_{1}\leq\cdots \leq
t_{m}=T$ of $[t,T]$ and  $\tau=\sup\limits_{i}|t_{i}-t_{i+1}|$.
 We apply  Lemma \ref{le1} to $u^{\varepsilon_{0}}$ and
$t+\delta=t_{1},\cdots,t_{m}$, successively. Then, for
$\varepsilon_{1}>0$ ($\varepsilon_{1}$ depends on $\varepsilon$ and
is specified later) there exist NAD strategies $\alpha_{i}\in
\mathcal {A}_{t,T}, i=1,\cdots,m$,  such that,  for all
 $v\in \mathcal {V}_{t,T}$,
\begin{eqnarray}\label{eq8}
\alpha_{i}(v)&=&u^{\varepsilon_{0}}, \text{on}\ [t,t_{i}],\nonumber\\
^{2}Y^{t,x;\alpha_{i}(v),v}_{t_{i}}&\leq&
W_{2}(t_{i},X^{t,x;\alpha_{i}(v),v}_{t_{i}})+\varepsilon_{1},
\mathbb{P}-a.s.
\end{eqnarray}
For all $v\in \mathcal {V}_{t,T}$, we define
\begin{eqnarray*}\label{}
S^{v}&=&\inf\Big\{s\geq t \ |\ \lambda(\{r\in[t,s]: v_{r}\neq
v_{r}^{\varepsilon_{0}}\})>0\Big\},\\
t^{v}&=&\inf\Big\{t_{i}\geq S^{v}\  |\ i=1,\cdots, m\Big\}\wedge T,
\end{eqnarray*}
where $\lambda$ denotes the Lebesgue measure on the real line
$\mathbb{R}$. Obviously, $S^{v}$ and $t^{v}$ are stopping times, and
we have $S^{v}\leq t^{v}\leq S^{v}+\tau$.

 Let
\begin{equation*}\label{}
\alpha_{\varepsilon}(v)=\left\{
\begin{array}{ll}
 u^{\varepsilon_{0}}, & \ \text{on}\ [[ t, t^{v}]],\\
\alpha_{i}(v), & \ \text{on}\ (t_{i},T]\times \{t^{v}=t_{i}\}, 1\leq
i\leq m.
\end{array}
\right.
\end{equation*}
It is easy to check that $\alpha_{\varepsilon}$ is an NAD strategy.
From (\ref{eq8}) it follows that
\begin{eqnarray}\label{eq9}
^{2}Y^{t,x;\alpha_{\varepsilon}(v),v}_{t^{v}}&=&
\sum\limits_{i=1}^{m} \
^{2}Y^{t,x;\alpha_{\varepsilon}(v),v}_{t_{i}}1_{\{t^{v}=t_{i}\}}\nonumber\\
&\leq&
\sum\limits_{i=1}^{m}W_{2}(t_{i},X^{t,x;\alpha_{\varepsilon}(v),v}_{t_{i}})1_{\{t^{v}=t_{i}\}}
+\varepsilon_{1}\nonumber\\
&=&
W_{2}(t^{v},X^{t,x;\alpha_{\varepsilon}(v),v}_{t^{v}})+\varepsilon_{1},
\ \mathbb{P}-a.s.
\end{eqnarray}
In what follows we will show that, for all $\varepsilon>0$ and $v\in
\mathcal {V}_{t,T}$,
\begin{eqnarray}\label{e1}
 J_{2}(t,x;\alpha_{\varepsilon}(v),v)
\leq J_{2}(t,x;u^{\varepsilon_{0}},v^{\varepsilon_{0}})
+\varepsilon, \quad
\alpha_{\varepsilon}(v^{\varepsilon_{0}})=u^{\varepsilon_{0}}.
\end{eqnarray}
This relation as well as the symmetric one for $J_{1}$ will lead to
the sufficiency of (\ref{eq6}) and (\ref{eq7}).  For the proof of
(\ref{e1}), we  note that by (\ref{eq9}), Lemma \ref{l1} and Lemma
\ref{l3} there exists a positive constant $C$ such that
\begin{eqnarray}\label{eqn2}
 J_{2}(t,x,\alpha_{\varepsilon}(v),v)&=&
 ^{2}G^{t,x;\alpha_{\varepsilon}(v),v}_{t,t^{v}}[^{2}Y^{t,x,\alpha_{\varepsilon}(v),v}_{t^{v}}]\nonumber\\
&\leq &
^{2}G^{t,x;\alpha_{\varepsilon}(v),v}_{t,t^{v}}[W_{2}(t^{v},X^{t,x;\alpha_{\varepsilon}(v),v}_{t^{v}})
+\varepsilon_{1}]\nonumber\\
&\leq &
^{2}G^{t,x;\alpha_{\varepsilon}(v),v}_{t,t^{v}}[W_{2}(t^{v},X^{t,x;\alpha_{\varepsilon}(v),v}_{t^{v}})]
+C\varepsilon_{1}.
\end{eqnarray}
Thanks to the Lemmas \ref{le5} and \ref{le2} as well as the
definitions of $t^{v}$ and $\alpha_{\varepsilon}$ we have
\begin{eqnarray*}\label{}
&&\mathbb{E}[|W_{2}(t^{v},X_{t^{v}}^{t,x;u^{\varepsilon_{0}},v^{\varepsilon_{0}}})
-W_{2}(t^{v},X^{t,x;\alpha_{\varepsilon}(v),v}_{t^{v}})|^{2}
\Big|\mathcal {F}_{t}]\\
 &\leq & C \mathbb{E}[|X_{t^{v}}^{t,x;u^{\varepsilon_{0}},v^{\varepsilon_{0}}}
-X^{t,x;\alpha_{\varepsilon}(v),v}_{t^{v}}|^{2}
\Big|\mathcal {F}_{t}]\\
 &\leq & C\tau,\  \mathbb{P}-a.s.
\end{eqnarray*}
Thus, from Lemma \ref{l3} it follows that
\begin{eqnarray*}\label{}
\begin{aligned}
&|\
^{2}G^{t,x;\alpha_{\varepsilon}(v),v}_{t,t^{v}}[W_{2}(t^{v},X_{t^{v}}^{t,x;u^{\varepsilon_{0}},v^{\varepsilon_{0}}})]
-\ ^{2}G^{t,x;\alpha_{\varepsilon}(v),v}_{t,t^{v}}[W_{2}(t^{v},X^{t,x;\alpha_{\varepsilon}(v),v}_{t^{v}})]|\\
&\leq
C\mathbb{E}[|W_{2}(t^{v},X_{t^{v}}^{t,x;u^{\varepsilon_{0}},v^{\varepsilon_{0}}})
-W_{2}(t^{v},X^{t,x;\alpha_{\varepsilon}(v),v}_{t^{v}})|^{2}
\Big|\mathcal {F}_{t}]^{\frac{1}{2}}\\
&\leq  C\tau^{\frac{1}{2}},
\end{aligned}
\end{eqnarray*}
and the above inequality and (\ref{eqn2}) yield
\begin{eqnarray*}\label{}
&& J_{2}(t,x,\alpha_{\varepsilon}(v),v)\\
&\leq &
^{2}G^{t,x;\alpha_{\varepsilon}(v),v}_{t,t^{v}}[W_{2}(t^{v},X_{t^{v}}^{t,x;u^{\varepsilon_{0}},v^{\varepsilon_{0}}})]
- \ ^{2}G^{t,x;\alpha_{\varepsilon}(v),v}_{t,t^{v}}[W_{2}(t^{v},X^{t,x;\alpha_{\varepsilon}(v),v}_{t^{v}})]|\\
&&+|\ ^{2}G^{t,x;\alpha_{\varepsilon}(v),v}_{t,t^{v}}[W_{2}(t^{v},X_{t^{v}}^{t,x;u^{\varepsilon_{0}},v^{\varepsilon_{0}}})]
+C\varepsilon_{1}\\
&\leq &
^{2}G^{t,x;\alpha_{\varepsilon}(v),v}_{t,t^{v}}[W_{2}(t^{v},X_{t^{v}}^{t,x;u^{\varepsilon_{0}},v^{\varepsilon_{0}}})]
+C\varepsilon_{1}+C\tau^{\frac{1}{2}}.
\end{eqnarray*}
For $s\in[t,T]$, we put
\begin{eqnarray*}\label{}
\Omega_{s}=\Big\{\
^{2}Y^{t,x;u^{\varepsilon_{0}},v^{\varepsilon_{0}}}_{s}\geq
W_{2}(s,X^{t,x;u^{\varepsilon_{0}},v^{\varepsilon_{0}}}_{s})-\varepsilon_{0}\Big\}.
\end{eqnarray*}
By the inequality $a\leq b+|a-b|, a,b\in\mathbb{R}$, we have
\begin{eqnarray}\label{eqn4}
&& J_{2}(t,x;\alpha_{\varepsilon}(v),v)\nonumber\\
 &\leq& \ ^{2}G^{t,x;\alpha_{\varepsilon}(v),v}_{t,t^{v}}
 [W_{2}(t^{v},X_{t^{v}}^{t,x;u^{\varepsilon_{0}},v^{\varepsilon_{0}}})]
+C\varepsilon_{1}+C\tau^{\frac{1}{2}}\nonumber\\
 &=& \ ^{2}G^{t,x;\alpha_{\varepsilon}(v),v}_{t,t^{v}}
[\sum\limits_{i=1}^{m}W_{2}(t_{i},X_{t_{i}}^{t,x;u^{\varepsilon_{0}},v^{\varepsilon_{0}}})1_{\{t^{v}=t_{i}\}}]
+C\varepsilon_{1}+C\tau^{\frac{1}{2}}\nonumber\\
&\leq&^{2}G^{t,x;\alpha_{\varepsilon}(v),v}_{t,t^{v}}
[\sum\limits_{i=1}^{m}W_{2}(t_{i},X_{t_{i}}^{t,x;u^{\varepsilon_{0}},v^{\varepsilon_{0}}})
1_{\{t^{v}=t_{i}\}}1_{\Omega_{t_{i}}}]+C\varepsilon_{1}+C\tau^{\frac{1}{2}}
\nonumber\\&&+|\ ^{2}G^{t,x;\alpha_{\varepsilon}(v),v}_{t,t^{v}}
[\sum\limits_{i=1}^{m}W_{2}(t_{i},X_{t_{i}}^{t,x;u^{\varepsilon_{0}},v^{\varepsilon_{0}}})1_{\{t^{v}=t_{i}\}}]
\nonumber\\&& \quad -\
^{2}G^{t,x;\alpha_{\varepsilon}(v),v}_{t,t^{v}}
[\sum\limits_{i=1}^{m}W_{2}(t_{i},X_{t_{i}}^{t,x;u^{\varepsilon_{0}},v^{\varepsilon_{0}}})
1_{\{t^{v}=t_{i}\}}1_{\Omega_{t_{i}}}]|.
\end{eqnarray}
Using Lemma \ref{l3} again as well as the boundedness of $W_{2}$, we
see that
\begin{eqnarray}\label{eqn5}
&&|\ ^{2}G^{t,x;\alpha_{\varepsilon}(v),v}_{t,t^{v}}
[\sum\limits_{i=1}^{m}W_{2}(t_{i},X_{t_{i}}^{t,x;u^{\varepsilon_{0}},v^{\varepsilon_{0}}})1_{\{t^{v}=t_{i}\}}]
\nonumber\\&&\quad -\
^{2}G^{t,x;\alpha_{\varepsilon}(v),v}_{t,t^{v}}
[\sum\limits_{i=1}^{m}W_{2}(t_{i},X_{t_{i}}^{t,x;u^{\varepsilon_{0}},v^{\varepsilon_{0}}})
1_{\{t^{v}=t_{i}\}}1_{\Omega_{t_{i}}}]|\nonumber\\
&\leq& C
\mathbb{E}[\sum\limits_{i=1}^{m}|W_{2}(t_{i},X_{t_{i}}^{t,x;u^{\varepsilon_{0}},v^{\varepsilon_{0}}})|^{2}
1_{\{t^{v}=t_{i}\}}1_{\Omega^{c}_{t_{i}}} \Big|\mathcal
{F}_{t}]^{\frac{1}{2}}\nonumber\\
&\leq& C\sum\limits_{i=1}^{m} \mathbb{P}(\Omega^{c}_{t_{i}}
|\mathcal{F}_{t})^{\frac{1}{2}}\leq Cm\varepsilon_{0}^{\frac{1}{2}},
\end{eqnarray}
where we have used (\ref{e12}) for the latter estimate.
 Observing that
\begin{eqnarray*}\label{}
^{2}Y^{t,x;u^{\varepsilon_{0}},v^{\varepsilon_{0}}}_{t_{i}}\geq
W_{2}(t_{i},X^{t,x;u^{\varepsilon_{0}},v^{\varepsilon_{0}}}_{t_{i}})-\varepsilon_{0},
\ \text{on}\ \Omega_{t_{i}},
\end{eqnarray*}
we deduce  from the Lemmas \ref{l1} and \ref{l3} that
\begin{eqnarray*}\label{}
&&^{2}G^{t,x;\alpha_{\varepsilon}(v),v}_{t,t^{v}}
[\sum\limits_{i=1}^{m}W_{2}(t_{i},X_{t_{i}}^{t,x;u^{\varepsilon_{0}},v^{\varepsilon_{0}}})
1_{\{t^{v}=t_{i}\}}1_{\Omega_{t_{i}}}]\\
&\leq& ^{2}G^{t,x;\alpha_{\varepsilon}(v),v}_{t,t^{v}}
[\sum\limits_{i=1}^{m}(^{2}Y^{t,x;u^{\varepsilon_{0}},v^{\varepsilon_{0}}}_{t_{i}}+\varepsilon_{0})
1_{\{t^{v}=t_{i}\}}1_{\Omega_{t_{i}}}]\\
&\leq& ^{2}G^{t,x;\alpha_{\varepsilon}(v),v}_{t,t^{v}}
[\sum\limits_{i=1}^{m}\
^{2}Y^{t,x;u^{\varepsilon_{0}},v^{\varepsilon_{0}}}_{t_{i}}
1_{\{t^{v}=t_{i}\}}1_{\Omega_{t_{i}}}+\varepsilon_{0}]\\
&\leq& ^{2}G^{t,x;\alpha_{\varepsilon}(v),v}_{t,t^{v}}
[\sum\limits_{i=1}^{m}\
^{2}Y^{t,x;u^{\varepsilon_{0}},v^{\varepsilon_{0}}}_{t_{i}}
1_{\{t^{v}=t_{i}\}}1_{\Omega_{t_{i}}}]+C\varepsilon_{0}.
\end{eqnarray*}
Hence, taking into account
$^{2}Y^{t,x;\alpha_{\varepsilon}(v),v}_{t^{v}}=
\sum\limits_{i=1}^{m} \
^{2}Y^{t,x;\alpha_{\varepsilon}(v),v}_{t_{i}}1_{\{t^{v}=t_{i}\}}$
and that, in analogy to (\ref{eqn5})
\begin{eqnarray*}\label{}
&&|\ ^{2}G^{t,x;\alpha_{\varepsilon}(v),v}_{t,t^{v}}
[\sum\limits_{i=1}^{m}\
^{2}Y^{t,x;u^{\varepsilon_{0}},v^{\varepsilon_{0}}}_{t_{i}}
1_{\{t^{v}=t_{i}\}}1_{\Omega_{t_{i}}}] -\
^{2}G^{t,x;\alpha_{\varepsilon}(v),v}_{t,t^{v}}
[\sum\limits_{i=1}^{m} \
^{2}Y^{t,x;\alpha_{\varepsilon}(v),v}_{t_{i}}1_{\{t^{v}=t_{i}\}}]|\\
&&\leq Cm\varepsilon_{0}^{\frac{1}{2}},
\end{eqnarray*}
we see that
\begin{eqnarray*}\label{}
&&^{2}G^{t,x;\alpha_{\varepsilon}(v),v}_{t,t^{v}}
[\sum\limits_{i=1}^{m}W_{2}(t_{i},X_{t_{i}}^{t,x;u^{\varepsilon_{0}},v^{\varepsilon_{0}}})
1_{\{t^{v}=t_{i}\}}1_{\Omega_{t_{i}}}]\\
&\leq& ^{2}G^{t,x;\alpha_{\varepsilon}(v),v}_{t,t^{v}} [
^{2}Y^{t,x;u^{\varepsilon_{0}},v^{\varepsilon_{0}}}_{t^{v}}]+C\varepsilon_{0}+ Cm\varepsilon_{0}^{\frac{1}{2}}\\
&\leq&|\ ^{2}G^{t,x;\alpha_{\varepsilon}(v),v}_{t,t^{v}} [ \
^{2}Y^{t,x;u^{\varepsilon_{0}},v^{\varepsilon_{0}}}_{t^{v}}] -\
^{2}G^{t,x;u^{\varepsilon_{0}},v^{\varepsilon_{0}}}_{t,t^{v}} [ \
^{2}Y^{t,x;u^{\varepsilon_{0}},v^{\varepsilon_{0}}}_{t^{v}}]|\\&& +
\ ^{2}G^{t,x;u^{\varepsilon_{0}},v^{\varepsilon_{0}}}_{t,t^{v}} [ \
^{2}Y^{t,x;u^{\varepsilon_{0}},v^{\varepsilon_{0}}}_{t^{v}}]
+C\varepsilon_{0}+ Cm\varepsilon_{0}^{\frac{1}{2}}\\
&=&|\ ^{2}G^{t,x;\alpha_{\varepsilon}(v),v}_{t,t^{v}} [ \
^{2}Y^{t,x;u^{\varepsilon_{0}},v^{\varepsilon_{0}}}_{t^{v}}] -\
^{2}G^{t,x;u^{\varepsilon_{0}},v^{\varepsilon_{0}}}_{t,t^{v}} [
\ ^{2}Y^{t,x;u^{\varepsilon_{0}},v^{\varepsilon_{0}}}_{t^{v}}]|\\
&&+J_{2}(t,x;u^{\varepsilon_{0}},v^{\varepsilon_{0}})
+C\varepsilon_{0}+ Cm\varepsilon_{0}^{\frac{1}{2}}.
\end{eqnarray*}
In the frame of the proof  of (\ref{e1}) we also need the following
estimate
\begin{eqnarray*}\label{}
|\ ^{2}G^{t,x;\alpha_{\varepsilon}(v),v}_{t,t^{v}} [ \
^{2}Y^{t,x;u^{\varepsilon_{0}},v^{\varepsilon_{0}}}_{t^{v}}] -\
^{2}G^{t,x;u^{\varepsilon_{0}},v^{\varepsilon_{0}}}_{t,t^{v}} [ \
^{2}Y^{t,x;u^{\varepsilon_{0}},v^{\varepsilon_{0}}}_{t^{v}}]|\leq
C\tau^{\frac{1}{2}}.
\end{eqnarray*}
In order to verify this relation we let, for all $s\in [t,t^{v}]$,
 $$y_{s}=\ ^{2}G^{t,x;\alpha_{\varepsilon}(v),v}_{s,t^{v}} [\
^{2}Y^{t,x;u^{\varepsilon_{0}},v^{\varepsilon_{0}}}_{t^{v}}],$$ and
we consider the  BSDE solved  by $y=(y_{s})$
\begin{equation*}\label{}
\begin{array}{lll}
y_s &=& ^{2}Y^{t,x;u^{\varepsilon_{0}},v^{\varepsilon_{0}}}_{t^{v}}
+\displaystyle \int_s^{t^{v}}
f_{2}(r,X^{t,x;\alpha_{\varepsilon}(v),v}_r,
y_r,z_r,\alpha_{\varepsilon}(v_r), v_r)dr-\displaystyle
\int_s^{t^{v}} z_rdB_r,
\end{array}
\end{equation*}
as well as
\begin{equation*}\label{}
\begin{array}{lll}
^{2}Y^{t,x;u^{\varepsilon_{0}},v^{\varepsilon_{0}}}_s &=&
^{2}Y^{t,x;u^{\varepsilon_{0}},v^{\varepsilon_{0}}}_{t^{v}}
+\displaystyle \int_s^{t^{v}}
f_{2}(r,X^{t,x;u^{\varepsilon_{0}},v^{\varepsilon_{0}}}_r, \
^{2}Y^{t,x;u^{\varepsilon_{0}},v^{\varepsilon_{0}}}_r,
\ ^{2}Z^{t,x;u^{\varepsilon_{0}},v^{\varepsilon_{0}}}_r,u^{\varepsilon_{0}}_r, v^{\varepsilon_{0}}_r)dr\\
&&-\displaystyle \int_s^{t^{v}} \
^{2}Z^{t,x;u^{\varepsilon_{0}},v^{\varepsilon_{0}}}_rdB_r, s\in [t,
t^{v}].
\end{array}
\end{equation*}
We notice that  $\alpha_{\varepsilon}(v)=u^{\varepsilon_{0}}, $ on
$[[ t, t^{v}]]$, $v=v^{\varepsilon_{0}}, $ on $[[ t, S^{v}]]$. (Of
course, these equalities, in particular, the latter one, are
understood as $dsd\mathbb{P}-a.e.$)
 Thanks to Lemma \ref{l3} we have
\begin{eqnarray*}
&&|\ ^{2}G^{t,x;\alpha_{\varepsilon}(v),v}_{t,t^{v}} [
^{2}Y^{t,x;u^{\varepsilon_{0}},v^{\varepsilon_{0}}}_{t^{v}}] - \
^{2}G^{t,x;u^{\varepsilon_{0}},v^{\varepsilon_{0}}}_{t,t^{v}} [
^{2}Y^{t,x;u^{\varepsilon_{0}},v^{\varepsilon_{0}}}_{t^{v}}]|^{2}\\
&\leq & C \mathbb{E}[\int_t^{S^{v}}
|f_{2}(r,X^{t,x;\alpha_{\varepsilon}(v),v}_r,y_r,z_r,\alpha_{\varepsilon}(v)_r,
v_r)-f_{2}(r,X^{t,x;u^{\varepsilon_{0}},v^{\varepsilon_{0}}}_r, y_r,
z_r,u^{\varepsilon_{0}}_r, v^{\varepsilon_{0}}_r) |^2dr|\mathcal
{F}_t]\\
&&+C \mathbb{E}[\int_{S^{v}}^{t^{v}}
|f_{2}(r,X^{t,x;\alpha_{\varepsilon}(v),v}_r,y_r,z_r,\alpha_{\varepsilon}(v)_r,
v_r)-f_{2}(r,X^{t,x;u^{\varepsilon_{0}},v^{\varepsilon_{0}}}_r, y_r,
z_r,u^{\varepsilon_{0}}_r, v^{\varepsilon_{0}}_r)
|^2dr|\mathcal{F}_t]\\
&=&C\mathbb{E}[\int_{S^{v}}^{t^{v}}
|f_{2}(r,X^{t,x;\alpha_{\varepsilon}(v),v}_r,y_r,z_r,\alpha_{\varepsilon}(v)_r,
v_r)-f_{2}(r,X^{t,x;u^{\varepsilon_{0}},v^{\varepsilon_{0}}}_r, y_r,
z_r,u^{\varepsilon_{0}}_r, v^{\varepsilon_{0}}_r)
|^2dr|\mathcal{F}_t]\\
 &\leq & C \mathbb{E}[\int_{S^{v}}^{t^{v}} 1_{\{v_r\neq
v^{\varepsilon_{0}}_r\}} dr|\mathcal {F}_t]\leq  C
\mathbb{E}[t^{v}-S^{v}|\mathcal {F}_t]\leq C\tau.
\end{eqnarray*}
Therefore, we have
\begin{eqnarray*}\label{}
&&^{2}G^{t,x;\alpha_{\varepsilon}(v),v}_{t,t^{v}}
[\sum\limits_{i=1}^{m}W_{2}(t_{i},X_{t_{i}}^{t,x;u^{\varepsilon_{0}},v^{\varepsilon_{0}}})
1_{\{t^{v}=t_{i}\}}1_{\Omega_{t_{i}}}]\\
&\leq&C\tau^{\frac{1}{2}}
+J_{2}(t,x;u^{\varepsilon_{0}},v^{\varepsilon_{0}})
+C\varepsilon_{0}+Cm\varepsilon_{0}^{\frac{1}{2}},
\end{eqnarray*}
and (\ref{eqn4}), (\ref{eqn5}) as well as  the above inequality
yield
\begin{eqnarray*}\label{}
 J_{2}(t,x;\alpha_{\varepsilon}(v),v)
&\leq&J_{2}(t,x;u^{\varepsilon_{0}},v^{\varepsilon_{0}})
+C\varepsilon_{0}+Cm\varepsilon_{0}^{\frac{1}{2}}
+C\varepsilon_{1}+C\tau^{\frac{1}{2}}.
\end{eqnarray*}
We can choose $\tau>0, \varepsilon_{0}>0,$ and $ \varepsilon_{1}>0$
such that $C\varepsilon_{0}+Cm\varepsilon_{0}^{\frac{1}{2}}
+C\varepsilon_{1}+C\tau^{\frac{1}{2}}\leq \varepsilon$ and
$\varepsilon_{0}<\varepsilon.$ Thus,
\begin{eqnarray*}\label{}
 J_{2}(t,x;\alpha_{\varepsilon}(v),v)
\leq J_{2}(t,x;u^{\varepsilon_{0}},v^{\varepsilon_{0}})
+\varepsilon, v\in \mathcal {V}_{t,T}.
\end{eqnarray*}
By a symmetric argument we can construct $\beta_{\varepsilon}\in
\mathcal {B}_{t,T}$ such that, for all $u\in \mathcal {U}_{t,T}$,
\begin{eqnarray}\label{e2}
 J_{1}(t,x;u,\beta_{\varepsilon}(u))
\leq J_{1}(t,x;u^{\varepsilon_{0}},v^{\varepsilon_{0}})
+\varepsilon,\quad
\beta_{\varepsilon}(u^{\varepsilon_{0}})=v^{\varepsilon_{0}}.
\end{eqnarray}
Finally, by virtue of  (\ref{e1}), (\ref{e2}), (\ref{e13}) and Lemma
\ref{le3} we can see that
$(\alpha_{\varepsilon},\beta_{\varepsilon})$ satisfies Definition
\ref{d2}. Therefore, $(e_{1},e_{2})$ is a Nash equilibrium payoff.
\end{proof}\\

{\bf Proof of Theorem \ref{t1}: \underline {Necessity}} of
(\ref{eq6}) and (\ref{eq7}).\\

\begin{proof}
We assume that $(e_{1},e_{2})\in\mathbb{R}^{2}$ is a Nash
equilibrium payoff at the point $(t,x)$. Then, for all sufficiently small
$\varepsilon>0$, there exists
$(\alpha_{\varepsilon},\beta_{\varepsilon})\in \mathcal
{A}_{t,T}\times \mathcal {B}_{t,T}$ such that, for all
$(\alpha,\beta)\in \mathcal {A}_{t,T}\times \mathcal {B}_{t,T}$
\begin{eqnarray}\label{e9}
J_{1}(t,x;\alpha_{\varepsilon},\beta_{\varepsilon})\geq
J_{1}(t,x;\alpha,\beta_{\varepsilon})-\varepsilon^{4},\
J_{2}(t,x;\alpha_{\varepsilon},\beta_{\varepsilon})\geq
J_{2}(t,x;\alpha_{\varepsilon},\beta)-\varepsilon^{4},
\mathbb{P}-a.s.,
\end{eqnarray}
and
\begin{eqnarray}\label{eqnarray1}
|\mathbb{E}[J_{j}(t,x;\alpha_{\varepsilon},\beta_{\varepsilon})]-e_{j}|\leq
\varepsilon^{4}, \ j=1,2.
\end{eqnarray}
Moreover, from Lemma \ref{l2} we know that  there exists a unique
couple $(u^{\varepsilon},v^{\varepsilon})$ such that
\begin{eqnarray*}\label{}
\alpha_{\varepsilon}(v^{\varepsilon})=u^{\varepsilon},
\beta_{\varepsilon}(u^{\varepsilon})=v^{\varepsilon}.
\end{eqnarray*}
Let us argue by contradiction. For this we observe that
(\ref{eqnarray1}) means  that (\ref{eq7}) holds. Assuming that
(\ref{eq6}) doesn't hold true, we have, for all $\varepsilon'> 0,$
the existence of some $\varepsilon\in (0,\varepsilon')$ (for which
we use the notations introduced above) and  $\delta\in[0,T-t]$  such
that, for some $j\in\{1,2\}$, say for $j=1$,
\begin{eqnarray}\label{e5}
\mathbb{P}\Big(\mathbb{P}(\
^{1}Y^{t,x;u^{\varepsilon},v^{\varepsilon}}_{t+\delta}<
W_{1}(t+\delta,X^{t,x;u^{\varepsilon},v^{\varepsilon}}_{t+\delta})-\varepsilon\
|\ \mathcal {F}_{t})> \varepsilon\Big)>0.
\end{eqnarray}
Put
\begin{eqnarray}\label{e3}
A=\Big\{\ ^{1}Y^{t,x;u^{\varepsilon},v^{\varepsilon}}_{t+\delta}<
W_{1}(t+\delta,X^{t,x;u^{\varepsilon},v^{\varepsilon}}_{t+\delta})-\varepsilon\
\Big\}\in\mathcal {F}_{t+\delta}.
\end{eqnarray}
By applying  Lemma \ref{le1} to $u^{\varepsilon}$ and $t+\delta$ we
see that,  there exists an NAD strategy $\widetilde{\alpha} \in
\mathcal {A}_{t,T},$ such that, for all $v\in \mathcal {V}_{t,T}$,
\begin{eqnarray}\label{eq18}
\widetilde{\alpha}(v)&=&u^{\varepsilon}, \text{on}\ [t,t+\delta],\nonumber\\
^{1}Y^{t,x;\widetilde{\alpha}(v),v}_{t+\delta}&\geq&
W_{1}(t+\delta,X^{t,x;\widetilde{\alpha}(v),v}_{t+\delta})-\frac{\varepsilon}{2},\
\mathbb{P}-a.s.
\end{eqnarray}
By virtue of Lemma \ref{l2} there exists a unique couple $(u,v)\in
\mathcal {U}_{t,T}\times \mathcal {V}_{t,T}$ such that
\begin{eqnarray*}\label{}
\widetilde{\alpha}(v)=u,\quad  \beta_{\varepsilon}(u)=v.
\end{eqnarray*}
We observe that this, in particular, means that $u=u^{\varepsilon}$
on $[t,t+\delta]$.  Let us define now a control $\widetilde{u}\in
\mathcal {U}_{t,T}$ as follows:
\begin{equation*}\label{}
\widetilde{u}=\left\{
\begin{array}{ll}
 u^{\varepsilon}, &\ \text{on}\quad ([t,t+\delta)\times\Omega)\cup([t+\delta,T]\times A^{c}),\\
u, & \ \text{on}\quad  [t+\delta,T]\times A.
\end{array}
\right.
\end{equation*}
Since $ \beta_{\varepsilon}\in\mathcal {B}_{t,T}$ is nonanticipative
it follows that
 $\beta_{\varepsilon}(\widetilde{u})=\beta_{\varepsilon}(u^{\varepsilon})=v^{\varepsilon}$ on
$[t,t+\delta]$, and for all $s\in[t+\delta,T]$,
\begin{equation*}\label{}
\beta_{\varepsilon}(\widetilde{u})_{s}=\left\{
\begin{array}{ll}
 \beta_{\varepsilon}(u)_{s}=v_{s}, & \text{on}\ A,\\
\beta_{\varepsilon}(u^{\varepsilon})_{s}=v^{\varepsilon}_{s}, &
\text{on}\ A^{c}.
\end{array}
\right.
\end{equation*}
Then  we have
\begin{equation*}\label{}
X^{t,x;\widetilde{u},\beta_{\varepsilon}(\widetilde{u})}=X^{t,x;u^{\varepsilon},v^{\varepsilon}},\
\text{on}\ [t,t+\delta],
\end{equation*}
\begin{equation*}\label{}
X^{t,x;\widetilde{u},\beta_{\varepsilon}(\widetilde{u})}=\left\{
\begin{array}{ll}
 X^{t,x;\widetilde{\alpha}(v),v}, & \text{on}\ [t+\delta,T]\times A,\\
X^{t,x;u^{\varepsilon},v^{\varepsilon}}, & \text{on}\
[t+\delta,T]\times A^{c},
\end{array}
\right.
\end{equation*}
and standard arguments show that also
\begin{equation*}\label{}
^{1}Y^{t,x;\widetilde{u},\beta_{\varepsilon}(\widetilde{u})}_{t+\delta}=\left\{
\begin{array}{ll}
 ^{1}Y^{t,x;\widetilde{\alpha}(v),v}_{t+\delta}, & \text{on}\  A,\\
^{1}Y^{t,x;u^{\varepsilon},v^{\varepsilon}}_{t+\delta}, & \text{on}\
 A^{c}.
\end{array}
\right.
\end{equation*}
Therefore,
\begin{eqnarray*}\label{}
 J_{1}(t,x;\widetilde{u},\beta_{\varepsilon}(\widetilde{u}))&=&\ ^
 {1}Y^{t,x;\widetilde{u},\beta_{\varepsilon}(\widetilde{u})}_{t}=
 \ ^{1}G^{t,x;\widetilde{u},\beta_{\varepsilon}(\widetilde{u})}_{t,t+\delta}
 [^{1}Y^{t,x;\widetilde{u},\beta_{\varepsilon}(\widetilde{u})}_{t+\delta}]\\
&=&^{1}G^{t,x;\widetilde{u},\beta_{\varepsilon}(\widetilde{u})}_{t,t+\delta}
 [^{1}Y^{t,x;\widetilde{u},\beta_{\varepsilon}(\widetilde{u})}_{t+\delta}1_{A}
 +\ ^{1}Y^{t,x;\widetilde{u},\beta_{\varepsilon}(\widetilde{u})}_{t+\delta}1_{A^{c}}]\\
&=&^{1}G^{t,x;\widetilde{u},\beta_{\varepsilon}(\widetilde{u})}_{t,t+\delta}
 [ ^{1}Y^{t,x;\widetilde{\alpha}(v),v}_{t+\delta}1_{A}
 +\
 ^{1}Y^{t,x;u^{\varepsilon},v^{\varepsilon}}_{t+\delta}1_{A^{c}}].
\end{eqnarray*}
Thanks to Lemma \ref{l1} and (\ref{eq18}) we have
\begin{eqnarray*}\label{}
 J_{1}(t,x;\widetilde{u},\beta_{\varepsilon}(\widetilde{u}))
&=&^{1}G^{t,x;\widetilde{u},\beta_{\varepsilon}(\widetilde{u})}_{t,t+\delta}
 [ ^{1}Y^{t,x;\widetilde{\alpha}(v),v}_{t+\delta}1_{A}
 +\ ^{1}Y^{t,x;u^{\varepsilon},v^{\varepsilon}}_{t+\delta}1_{A^{c}}]\\
 &\geq&^{1}G^{t,x;\widetilde{u},\beta_{\varepsilon}(\widetilde{u})}_{t,t+\delta}
 [(W_{1}(t+\delta,X^{t,x;\widetilde{\alpha}(v),v}_{t+\delta})-\frac{\varepsilon}{2})1_{A}
 +\ ^{1}Y^{t,x;u^{\varepsilon},v^{\varepsilon}}_{t+\delta}1_{A^{c}}]\\
  &=&^{1}G^{t,x;\widetilde{u},\beta_{\varepsilon}(\widetilde{u})}_{t,t+\delta}
 [W_{1}(t+\delta,X^{t,x;u^{\varepsilon},v^{\varepsilon}}_{t+\delta})1_{A}
 +\ ^{1}Y^{t,x;u^{\varepsilon},v^{\varepsilon}}_{t+\delta}1_{A^{c}}-\frac{\varepsilon}{2}1_{A}].
\end{eqnarray*}
Hence, from (\ref{e3}) it follows that
\begin{eqnarray}\label{eqn6}
 J_{1}(t,x;\widetilde{u},\beta_{\varepsilon}(\widetilde{u}))
  &\geq&^{1}G^{t,x;\widetilde{u},\beta_{\varepsilon}(\widetilde{u})}_{t,t+\delta}
 [W_{1}(t+\delta,X^{t,x;u^{\varepsilon},v^{\varepsilon}}_{t+\delta})1_{A}
 +\ ^{1}Y^{t,x;u^{\varepsilon},v^{\varepsilon}}_{t+\delta}1_{A^{c}}-\frac{\varepsilon}{2}1_{A}]\nonumber\\
  &\geq&^{1}G^{t,x;\widetilde{u},\beta_{\varepsilon}(\widetilde{u})}_{t,t+\delta}
 [ (^{1}Y^{t,x;u^{\varepsilon},v^{\varepsilon}}_{t+\delta}+\varepsilon)1_{A}
 +\ ^{1}Y^{t,x;u^{\varepsilon},v^{\varepsilon}}_{t+\delta}1_{A^{c}}-\frac{\varepsilon}{2}1_{A}]\nonumber\\
   &=&^{1}G^{t,x;\widetilde{u},\beta_{\varepsilon}(\widetilde{u})}_{t,t+\delta}
 [^{1}Y^{t,x;u^{\varepsilon},v^{\varepsilon}}_{t+\delta}+\frac{\varepsilon}{2}1_{A}]\nonumber\\
  &=&^{1}G^{t,x;u^{\varepsilon},v^{\varepsilon}}_{t,t+\delta}
 [^{1}Y^{t,x;u^{\varepsilon},v^{\varepsilon}}_{t+\delta}+\frac{\varepsilon}{2}1_{A}].
\end{eqnarray}
Let $$y_{s}=\
^{1}G^{t,x;u^{\varepsilon},v^{\varepsilon}}_{s,t+\delta}
 [\
 ^{1}Y^{t,x;u^{\varepsilon},v^{\varepsilon}}_{t+\delta}+\dfrac{\varepsilon}{2}1_{A}],
 s\in [t,t+\delta].$$
 This process is the solution of  the following BSDE:
\begin{equation*}\label{}
\begin{array}{lll}
y_s &=&
^{1}Y^{t,x;u^{\varepsilon},v^{\varepsilon}}_{t+\delta}+\dfrac{\varepsilon}{2}1_{A}
+\displaystyle \int_s^{t+\delta}
f_{1}(r,X^{t,x;u^{\varepsilon},v^{\varepsilon}}_r,
y_r,z_r,u^{\varepsilon}_{r},v^{\varepsilon}_{r})dr\\&&\qquad-\displaystyle
\int_s^{t+\delta} z_rdB_r, \qquad s\in [t,t+\delta],
\end{array}
\end{equation*}
which we compare with
\begin{equation*}\label{}
\begin{array}{lll}
^{1}Y^{t,x;u^{\varepsilon},v^{\varepsilon}}_s &=& \
^{1}Y^{t,x;u^{\varepsilon},v^{\varepsilon}}_{t+\delta}
+\displaystyle \int_s^{t+\delta}
f_{1}(r,X^{t,x;u^{\varepsilon},v^{\varepsilon}}_r,\
^{1}Y^{t,x;u^{\varepsilon},v^{\varepsilon}}_r,
\ ^{1}Z^{t,x;u^{\varepsilon},v^{\varepsilon}}_r,u^{\varepsilon}_r, v^{\varepsilon}_r)dr\\
&&\qquad -\displaystyle \int_s^{t+\delta} \
^{1}Z^{t,x;u^{\varepsilon},v^{\varepsilon}}_rdB_r, \quad s\in
[t,t+\delta].
\end{array}
\end{equation*}
Putting
$$\overline{y}_{s}=y_{s}-\ ^{1}Y^{t,x;u^{\varepsilon},v^{\varepsilon}}_{s},\
 \overline{z}_{s}=z_{s}-\ ^{1}Z^{t,x;u^{\varepsilon},v^{\varepsilon}}_{s}, \quad s\in
[t,t+\delta],$$ we have
\begin{eqnarray}\label{e4}
\overline{y}_s &=&\displaystyle \int_s^{t+\delta}
[f_{1}(r,X^{t,x;u^{\varepsilon},v^{\varepsilon}}_r,
y_r,z_r,u^{\varepsilon}_{r},v^{\varepsilon}_{r})-f_{1}(r,X^{t,x;u^{\varepsilon},v^{\varepsilon}}_r,
\ ^{1}Y^{t,x;u^{\varepsilon},v^{\varepsilon}}_r, \
^{1}Z^{t,x;u^{\varepsilon},v^{\varepsilon}}_r,u^{\varepsilon}_r,
v^{\varepsilon}_r)]dr\nonumber\\&&+ \dfrac{\varepsilon}{2}1_{A}
-\displaystyle \int_s^{t+\delta} \overline{z}_rdB_r, \quad s\in
[t,t+\delta].
\end{eqnarray}
For notational simplification let us assume that the Brownian motion
$B$ is one dimensional, and we introduce, for $r\in [t,t+\delta]$,
\begin{eqnarray*}
&&a_{r}=1_{\{\overline{y}_{r}\neq
0\}}(\overline{y}_{r})^{-1}\Big(f_{1}(r,X^{t,x;u^{\varepsilon},v^{\varepsilon}}_r,
y_r,z_r,u^{\varepsilon}_{r},v^{\varepsilon}_{r})-f_{1}(r,X^{t,x;u^{\varepsilon},v^{\varepsilon}}_r,
\ ^{1}Y^{t,x;u^{\varepsilon},v^{\varepsilon}}_r,
z_r,u^{\varepsilon}_r,v^{\varepsilon}_r)\Big),\\
&&b_{r}=1_{\{\overline{z}_{r}\neq
0\}}(\overline{z}_{r})^{-1}\Big(f_{1}(r,X^{t,x;u^{\varepsilon},v^{\varepsilon}}_r,
\ ^{1}Y^{t,x;u^{\varepsilon},v^{\varepsilon}}_r,
z_r,u^{\varepsilon}_r,v^{\varepsilon}_r)\\&&\qquad \qquad \qquad
\qquad \qquad \qquad  \qquad
-f_{1}(r,X^{t,x;u^{\varepsilon},v^{\varepsilon}}_r, \
^{1}Y^{t,x;u^{\varepsilon},v^{\varepsilon}}_r, \
^{1}Z^{t,x;u^{\varepsilon},v^{\varepsilon}}_r,u^{\varepsilon}_r,
v^{\varepsilon}_r)\Big).
   \end{eqnarray*}
Then, from the Lipschitz property of $f_{1}$ we see that
$|a_{r}|\leq L $, $|b_{r}|\leq L, r\in[t,t+\delta]$, and BSDE
(\ref{e4}) takes the following form:
\begin{eqnarray*}\label{}
\overline{y}_s &=& \dfrac{\varepsilon}{2}1_{A} +\displaystyle
\int_s^{t+\delta}
[a_{r}\overline{y}_r+b_{r}\overline{z}_r]dr\nonumber-\displaystyle
\int_s^{t+\delta} \overline{z}_rdB_r, s\in[t,t+\delta].
\end{eqnarray*}
By putting
$$Q_{s}=\exp\Big(\int_{t}^{s}a_{r}dr
  +\int_{t}^{s}b_{r}dB_{r}-\frac{1}{2}\int_{t}^{s}|b_{r}|^{2}dr\Big), s\in [t,t+\delta],$$
applying It\^o's formula  to $\overline{y}_s Q_{s}$, and then taking
the conditional   expectation, we deduce that
\begin{eqnarray*}
\overline{y}_t&=&\dfrac{\varepsilon}{2}\mathbb{E}[1_{A}Q_{t+\delta}\
|\mathcal{F}_{t}].
\end{eqnarray*}
By the Schwarz inequality we have
\begin{eqnarray*}
\mathbb{P}(A |\mathcal{F}_{t})^{2}=(\mathbb{E}[1_{A}|\mathcal
  {F}_{t}])^{2}\leq \mathbb{E}[1_{A}Q_{t+\delta}\
|\mathcal{F}_{t}]\mathbb{E}[Q_{t+\delta}^{-1}\ |\mathcal{F}_{t}].
\end{eqnarray*}
We observe that
\begin{eqnarray*}
\mathbb{E}[Q_{t+\delta}^{-1}\
|\mathcal{F}_{t}]&=&\mathbb{E}[\exp\Big(-\int_{t}^{t+\delta}a_{r}dr
 -\int_{t}^{t+\delta}b_{r}dB_{r}+\frac{1}{2}\int_{t}^{t+\delta}|b_{r}|^{2}dr\Big)\
 |\ \mathcal{F}_{t}]\\
&\leq& \exp(L\delta+L^{2}\delta)\mathbb{E}[\exp\Big(
 -\int_{t}^{t+\delta}b_{r}dB_{r}-\frac{1}{2}\int_{t}^{t+\delta}|b_{r}|^{2}dr\Big)\
 |\ \mathcal{F}_{t}]\\
 &=& \exp(L\delta+L^{2}\delta).
\end{eqnarray*}
 Let $$\Delta=\Big\{\mathbb{P}(\
^{1}Y^{t,x;u^{\varepsilon},v^{\varepsilon}}_{t+\delta}<
W_{1}(t+\delta,X^{t,x;u^{\varepsilon},v^{\varepsilon}}_{t+\delta})-\varepsilon\
|\ \mathcal {F}_{t})> \varepsilon\Big\}\Big(=\{\mathbb{P}(A
|\mathcal{F}_{t})>\varepsilon\}\Big).$$ Then,
\begin{eqnarray}\label{eqn7}
\overline{y}_t&=&\dfrac{\varepsilon}{2}\mathbb{E}[1_{A}Q_{t+\delta}\
|\mathcal{F}_{t}]\nonumber\\
&\geq&\dfrac{\exp(-L\delta-L^{2}\delta)\varepsilon}{2}(\mathbb{E}[1_{A}|\mathcal
  {F}_{t}])^{2}=\dfrac{\exp(-L\delta-L^{2}\delta)\varepsilon}{2}(\mathbb{P}(A|\mathcal
  {F}_{t}))^{2}\nonumber\\
&>&\dfrac{\varepsilon^{3}}{2}C_{0}1_{\Delta},
\end{eqnarray}
for $C_{0}=\exp(-L\delta-L^{2}\delta)$, where we use (\ref{e5}) in
the last inequality. Combining  (\ref{eqn7}) with
\begin{eqnarray*}\label{}
\overline{y}_{t}=y_{t}-\
^{1}Y_{t}^{t,x;u^{\varepsilon},v^{\varepsilon}}= \
^{1}G^{t,x;u^{\varepsilon},v^{\varepsilon}}_{t,t+\delta}
 [^{1}Y^{t,x;u^{\varepsilon},v^{\varepsilon}}_{t+\delta}+\frac{\varepsilon}{2}1_{A}]
-\ ^{1}G^{t,x;u^{\varepsilon},v^{\varepsilon}}_{t,t+\delta}
 [^{1}Y^{t,x;u^{\varepsilon},v^{\varepsilon}}_{t+\delta}],
\end{eqnarray*}
 we have
\begin{eqnarray*}\label{}
^{1}G^{t,x;u^{\varepsilon},v^{\varepsilon}}_{t,t+\delta}
 [^{1}Y^{t,x;u^{\varepsilon},v^{\varepsilon}}_{t+\delta}+\frac{\varepsilon}{2}1_{A}]
> \ ^{1}G^{t,x;u^{\varepsilon},v^{\varepsilon}}_{t,t+\delta}
 [^{1}Y^{t,x;u^{\varepsilon},v^{\varepsilon}}_{t+\delta}]+\dfrac{\varepsilon^{3}}{2}C_{0}1_{\Delta},
\end{eqnarray*}
and  (\ref{eqn6}) then yields
\begin{eqnarray*}\label{}
 J_{1}(t,x;\widetilde{u},\beta_{\varepsilon}(\widetilde{u}))
  &>&  J_{1}(t,x;\alpha_{\varepsilon},\beta_{\varepsilon})+\dfrac{\varepsilon^{3}}{2}C_{0}1_{\Delta}.
\end{eqnarray*}
 We can choose $\varepsilon'\in(0,1)$ sufficiently small
such that $\dfrac{\varepsilon'^{3}}{2}C_{0}>\varepsilon'^{4}$
(Recall that $\varepsilon'>0$ has been introduced at the beginning
of the proof, assuming  that (\ref{eq6}) doesn't hold true). Then
this relation is also satisfied by $\varepsilon\in (0,
\varepsilon'):$ $\dfrac{\varepsilon^{3}}{2}C_{0}>\varepsilon^{4}$.
Since $\mathbb{P}(\Delta)>0$, the above inequality  contradicts with
(\ref{e9}) for $\alpha(\cdot)=\widetilde{u}$. The proof is complete.
\end{proof}\\

We now give the existence theorem of a Nash equilibrium payoff.
\begin{theorem}\label{t2}
Let the Isaacs condition  (\ref{Isaacs}) hold. Then for all
$(t,x)\in[0,T]\times\mathbb{R}^{n}$, there exists a Nash equilibrium
payoff at $(t,x)$.
\end{theorem}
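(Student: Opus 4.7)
Theorem \ref{t1} reduces the existence of a Nash equilibrium payoff to the construction, for each $\varepsilon>0$, of a pair of controls $(u^{\varepsilon},v^{\varepsilon})\in\mathcal{U}_{t,T}\times\mathcal{V}_{t,T}$ satisfying the probabilistic inequality (\ref{eq6}); the real numbers $e_{j}$ can then be obtained as limits along a subsequence of $\mathbb{E}[J_{j}(t,x;u^{\varepsilon},v^{\varepsilon})]$, which is a bounded sequence thanks to the boundedness assumption $(H3.5)$. The plan is therefore to construct $(u^{\varepsilon},v^{\varepsilon})$ explicitly, verify (\ref{eq6}) for it, extract a diagonal subsequence of expected payoffs converging in $\mathbb{R}^{2}$, and invoke Theorem \ref{t1}.

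\textbf{Grid construction.} Fix a partition $t=t_{0}<t_{1}<\cdots<t_{m}=T$ with mesh $\tau=\max_{i}(t_{i+1}-t_{i})$ small, and construct $(u^{\varepsilon},v^{\varepsilon})$ forward along the grid. At step $i$, given $x_{i}=X^{t,x;u^{\varepsilon},v^{\varepsilon}}_{t_{i}}$, apply the dynamic programming principle (Proposition \ref{p1}) for $W_{1}$ and, by the Isaacs identity (\ref{equa}), for $W_{2}$: there exist $\alpha_{i}^{*}\in\mathcal{A}_{t_{i},t_{i+1}}$ and $\beta_{i}^{*}\in\mathcal{B}_{t_{i},t_{i+1}}$ (constructed by a measurable selection in the spirit of Lemma 3.8 of \cite{BL2006}, exactly as in the proof of Lemma \ref{le1}) such that
\begin{eqnarray*}
\essinf_{\beta}\ ^{1}G^{\alpha_{i}^{*}(\beta),\beta}_{t_{i},t_{i+1}}[W_{1}(t_{i+1},X_{t_{i+1}})]&\geq& W_{1}(t_{i},x_{i})-\varepsilon/m,\\
\essinf_{\alpha}\ ^{2}G^{\alpha,\beta_{i}^{*}(\alpha)}_{t_{i},t_{i+1}}[W_{2}(t_{i+1},X_{t_{i+1}})]&\geq& W_{2}(t_{i},x_{i})-\varepsilon/m.
\end{eqnarray*}
By Lemma \ref{l2}, $(\alpha_{i}^{*},\beta_{i}^{*})$ determines a unique pair of controls $(u_{i},v_{i})\in\mathcal{U}_{t_{i},t_{i+1}}\times\mathcal{V}_{t_{i},t_{i+1}}$, and the crucial observation is that choosing $\beta=\beta_{i}^{*}(\cdot)$ in the first inequality and $\alpha=\alpha_{i}^{*}(\cdot)$ in the second makes both lower bounds hold simultaneously for this single pair. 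Concatenating $(u_{i},v_{i})$ over $i=0,\ldots,m-1$ yields $(u^{\varepsilon},v^{\varepsilon})$.

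\textbf{Backward estimate and completion.} Set $\delta_{i}=\max_{j=1,2}\big(W_{j}(t_{i},X_{t_{i}})-\ ^{j}Y^{t,x;u^{\varepsilon},v^{\varepsilon}}_{t_{i}}\big)^{+}$. Since $\Phi_{j}(x)=W_{j}(T,x)$, we have $\delta_{m}=0$. Using the backward semigroup identity $^{j}Y^{t,x;u^{\varepsilon},v^{\varepsilon}}_{t_{i}}=\ ^{j}G^{u^{\varepsilon},v^{\varepsilon}}_{t_{i},t_{i+1}}[\ ^{j}Y^{t,x;u^{\varepsilon},v^{\varepsilon}}_{t_{i+1}}]$, the comparison theorem (Lemma \ref{l1}), and the stability estimate of Lemma \ref{l3}, the grid-point inequality above propagates as $\delta_{i}\leq \varepsilon/m+(1+C\tau)\delta_{i+1}$, whence $\delta_{i}\leq C'\varepsilon$ with $C'$ independent of $m$ (since $(1+C\tau)^{m}$ stays bounded as $m\tau=T-t$). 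Interpolating between grid points via the Hölder continuity of $W_{j}$ (Lemma \ref{le5}), the pathwise regularity of $X^{t,x;u^{\varepsilon},v^{\varepsilon}}$ (Lemma \ref{l4}), and an analogous moment estimate for $^{j}Y$, then applying Markov's inequality conditionally on $\mathcal{F}_{t}$, one obtains
$$\mathbb{P}\big(\ ^{j}Y^{t,x;u^{\varepsilon},v^{\varepsilon}}_{s}\geq W_{j}(s,X^{t,x;u^{\varepsilon},v^{\varepsilon}}_{s})-K(\varepsilon+\tau^{1/4})\mid\mathcal{F}_{t}\big)\geq 1-K(\varepsilon+\tau^{1/4}),$$
uniformly in $s\in[t,T]$ and $j=1,2$. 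Choosing $\tau=\tau(\varepsilon)$ small enough gives (\ref{eq6}) (for a relabelled $\varepsilon$). The expected payoffs $\mathbb{E}[J_{j}(t,x;u^{\varepsilon},v^{\varepsilon})]$ are bounded in $[-K,K]^{2}$ by $(H3.5)$; extracting a sequence $\varepsilon_{n}\downarrow 0$ along which both expectations converge to some $(e_{1},e_{2})\in\mathbb{R}^{2}$ gives (\ref{eq7}), and Theorem \ref{t1} concludes.

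\textbf{Main obstacle.} The hard part is the simultaneous selection step: ensuring that the NAD strategies $(\alpha_{i}^{*},\beta_{i}^{*})$ are measurable in the starting state $x_{i}$ (so that splicing them along the random trajectory produces an admissible control pair), and that the grid-wise $\varepsilon/m$ defects do not blow up through the $m$ iterations. Both issues are handled by the measurable selection technique of \cite{BL2006} combined with the quantitative stability of BSDEs in Lemma \ref{l3}, which yields the geometric recursion for $\delta_{i}$ above.
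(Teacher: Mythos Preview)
Your proposal follows the same architecture as the paper: a grid on $[t,T]$, near-optimal strategies on each subinterval from the dynamic programming principle, Lemma~\ref{l2} to extract a single control pair satisfying both inequalities simultaneously, iteration of the one-step estimate through the grid, interpolation to off-grid times, and extraction of an accumulation point of the bounded expected payoffs. In the paper this is organised as the Lemma preceding Lemma~\ref{le6} (one-step construction), Lemma~\ref{le6} (grid-wise inequality), and Proposition~\ref{p2} (iteration and interpolation).

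There is one technical difference worth noting. The paper first constructs $(u^{\varepsilon},v^{\varepsilon})$ \emph{independent of $\mathcal{F}_{t}$}: it passes to the shifted filtration $\mathbb{F}^{t}$ generated by $(B_{s}-B_{t})_{s\geq t}$, defines auxiliary value functions $\widetilde{W}_{j}$ over $\mathbb{F}^{t}$-adapted strategies, and uses viscosity uniqueness for the associated Isaacs equation to get $\widetilde{W}_{j}=W_{j}$, so that near-optimal strategies can be chosen $\mathbb{F}^{t}$-adapted. This independence then converts the conditional probability in (\ref{eq6}) to an ordinary probability and makes $J_{j}(t,x;u^{\varepsilon},v^{\varepsilon})$ deterministic. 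You bypass this by working conditionally on $\mathcal{F}_{t}$ throughout, which is legitimate because the SDE/BSDE estimates (Lemmas~\ref{l3} and~\ref{l4}) are already stated conditionally with bounds depending only on the deterministic initial point $x$; but you then have to carry the conditional versions of all interpolation estimates explicitly. The paper's route buys a clean passage from unconditional to conditional probability at the cost of invoking viscosity uniqueness; yours avoids that detour at the cost of more careful bookkeeping. One further remark: your recursion $\delta_{i}\leq\varepsilon/m+(1+C\tau)\delta_{i+1}$ should be read as a recursion on deterministic a.s.\ upper bounds (starting from $\delta_{m}=0$), since $\delta_{i}$ is $\mathcal{F}_{t_{i}}$-measurable while $\delta_{i+1}$ is $\mathcal{F}_{t_{i+1}}$-measurable and the inequality cannot hold pointwise as written; with that reading the argument matches the paper's forward iteration, where the accumulated error is $Cn\varepsilon_{0}$ and is killed by choosing $\varepsilon_{0}=\varepsilon/(2Cn)$.
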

Let us admit the following Proposition for the moment. We shall give
its proof after.
\begin{proposition}\label{p2}
Under the assumptions of Theorem \ref{t1},  for all $\varepsilon>0,$
there exists
$(u^{\varepsilon},v^{\varepsilon})\in\mathcal{U}_{t,T}\times\mathcal{V}_{t,T}$
independent of $\mathcal {F}_{t}$ such that, for all $t\leq
s_{1}\leq s_{2}\leq T$, $j=1,2$,
\begin{eqnarray*}\label{}
\mathbb{P}\Big(\
 W_{j}(s_{1},X^{t,x;u^{\varepsilon},v^{\varepsilon}}_{s_{1}})-\varepsilon\leq
\ ^{j}G^{t,x;u^{\varepsilon},v^{\varepsilon}}_{s_{1},s_{2}}
 [W_{j}(s_{2},X^{t,x;u^{\varepsilon},v^{\varepsilon}}_{s_{2}})]\Big)> 1- \varepsilon.
\end{eqnarray*}
\end{proposition}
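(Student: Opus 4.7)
The plan is to combine the dynamic programming principle of Proposition \ref{p1} (applied to both $W_1$ and $W_2$, using the dual form (\ref{equa}) for the latter) with a double discretization in time and space, and then paste the resulting $\varepsilon$-optimal NAD strategies into a single couple by Lemma \ref{l2}. I first choose a time partition $t = t_0 < t_1 < \cdots < t_N = T$ of mesh $\tau = (T-t)/N$ and a Borel partition $\{O_k\}_{k\ge1}$ of $\mathbb{R}^n$ of diameters $\le\varepsilon_0$, with representatives $y_k\in O_k$. For each pair $(i,k)$, I use Proposition \ref{p1} at the deterministic point $(t_i,y_k)$ and (after an approximation argument as in the proof of Lemma~3.8 of Buckdahn–Li \cite{BL2006}) extract NAD strategies $\alpha^{i,k}\in\mathcal{A}_{t_i,t_{i+1}}$ and $\beta^{i,k}\in\mathcal{B}_{t_i,t_{i+1}}$ such that, for all opposing controls, $W_1(t_i,y_k)\le\ ^1G_{t_i,t_{i+1}}[W_1(t_{i+1},X_{t_{i+1}}^{t_i,y_k;\alpha^{i,k}(v),v})]+\varepsilon_1$ and $W_2(t_i,y_k)\le\ ^2G_{t_i,t_{i+1}}[W_2(t_{i+1},X_{t_{i+1}}^{t_i,y_k;u,\beta^{i,k}(u)})]+\varepsilon_1$, where $\varepsilon_1$ is to be fixed later.

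Next, I paste these building blocks exactly as in the proof of Lemma \ref{le1}. For $s\in(t_i,t_{i+1}]$ I set
\[
\alpha^{\varepsilon}(v)_s := \sum_{k\ge 1} 1_{\{X^{t,x;u,v}_{t_i}\in O_k\}}\,\alpha^{i,k}\bigl(v|_{[t_i,t_{i+1}]}\bigr)_s,\qquad \beta^{\varepsilon}(u)_s := \sum_{k\ge 1} 1_{\{X^{t,x;u,v}_{t_i}\in O_k\}}\,\beta^{i,k}\bigl(u|_{[t_i,t_{i+1}]}\bigr)_s,
\]
where $(u,v)$ is recovered inductively from the fixed-point statement of Lemma \ref{l2}. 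The same verification as in Lemma \ref{le1} shows that $\alpha^\varepsilon\in\mathcal{A}_{t,T}$ and $\beta^\varepsilon\in\mathcal{B}_{t,T}$; Lemma \ref{l2} then produces a unique pair $(u^\varepsilon,v^\varepsilon)\in\mathcal{U}_{t,T}\times\mathcal{V}_{t,T}$ with $\alpha^\varepsilon(v^\varepsilon)=u^\varepsilon$ and $\beta^\varepsilon(u^\varepsilon)=v^\varepsilon$. Since the construction uses only the state process $X^{t,x;u^\varepsilon,v^\varepsilon}$ and the Brownian increments on $[t,T]$, the couple $(u^\varepsilon,v^\varepsilon)$ is $\sigma\{B_r-B_t:\,r\in[t,T]\}$-measurable, hence independent of $\mathcal{F}_t$.

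To verify the inequality I work first between two consecutive partition points. On $\{X^{t,x;u^\varepsilon,v^\varepsilon}_{t_i}\in O_k\}$ the played strategy coincides with $\alpha^{i,k}$ (resp.\ $\beta^{i,k}$), and Lemma \ref{le5} combined with Lemma \ref{l3} (and Proposition \ref{p3} for the BSDE starting point) allows me to transfer the estimate from the deterministic start $y_k$ to the random state $X_{t_i}$ at a cost of order $\varepsilon_0$. Summing over one step gives, $\mathbb{P}$-a.s. and for $j=1,2$,
\[
W_j\bigl(t_i, X^{t,x;u^\varepsilon,v^\varepsilon}_{t_i}\bigr)\ \le\ ^jG^{t,x;u^\varepsilon,v^\varepsilon}_{t_i,t_{i+1}}\bigl[W_j\bigl(t_{i+1}, X^{t,x;u^\varepsilon,v^\varepsilon}_{t_{i+1}}\bigr)\bigr]+\varepsilon_1+C\varepsilon_0.
\]
Iterating this inequality and using the semigroup property of $^jG$ and the stability estimate of Lemma \ref{l3} yields, for any two partition points $t_i\le t_\ell$, an analogous bound with accumulated error $N(\varepsilon_1+C\varepsilon_0)$. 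For arbitrary $s_1\le s_2$ in $[t,T]$ I interpolate with the nearest partition points, using the $\tfrac12$-Hölder continuity of $W_j$ in time and the Lipschitz continuity in space (Lemma \ref{le5}), together with Lemma \ref{l4}, Lemma \ref{l3} and the boundedness assumption $(H3.5)$; this adds a further error of order $C\sqrt{\tau}(1+|x|)$.

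Finally I choose the three small parameters in the right order: first $N$ large, then $\varepsilon_1=\varepsilon^2/(3N)$ and $\varepsilon_0=\varepsilon^2/(3CN)$, and $\tau=(T-t)/N$ small enough so that $C\sqrt{\tau}(1+|x|)\le\varepsilon^2/3$. This produces even a $\mathbb{P}$-a.s.\ inequality with slack $\varepsilon^2$, so in particular the event in the statement has probability $\ge 1-\varepsilon$. The main obstacle I anticipate is simultaneity: the two inequalities must hold along the \emph{same} trajectory $X^{t,x;u^\varepsilon,v^\varepsilon}$, which forces the coupled pasting of $\alpha^{i,k}$ and $\beta^{i,k}$ and the delicate application of Lemma \ref{l2} to extract a single $(u^\varepsilon,v^\varepsilon)$; the second delicate point is the bookkeeping of $\varepsilon_0$, $\varepsilon_1$ and $\tau$ so that the accumulated error over $N$ time steps remains under control.
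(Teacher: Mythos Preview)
Your overall architecture matches the paper's: discretize in time, obtain an almost-sure one-step inequality, iterate along the partition, and then interpolate to arbitrary $s_1\le s_2$. However, two points in your argument are not correct as written.

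\textbf{The interpolation step cannot yield an almost-sure inequality.} You write that passing from partition times to arbitrary $s_1,s_2$ ``adds a further error of order $C\sqrt{\tau}(1+|x|)$'' and then conclude a $\mathbb{P}$-a.s.\ inequality. This fails: the difference $W_j(s_1,X_{s_1})-W_j(t_i,X_{t_i})$ is controlled by $C\sqrt{\tau}(1+|X_{t_i}|)+C|X_{s_1}-X_{t_i}|$, and the second term contains the stochastic integral $\int_{t_i}^{s_1}\sigma\,dB$, which is unbounded $\mathbb{P}$-a.s.\ however small $\tau$ is. The paper handles exactly this by proving only the $L^2$ estimate $\mathbb{E}[|I_1-I_2|^2]\le C\tau$ (with $I_1\ge 0$ a.s.\ from the partition-time inequality and $I_2$ the quantity you want nonnegative), and then invokes Chebyshev: $\mathbb{P}(I_2\le -\varepsilon/2)\le 4C\tau/\varepsilon^2\le\varepsilon$ for $\tau\le\varepsilon^3/(4C)$. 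This is precisely why the conclusion of the proposition is a probability bound and not an a.s.\ statement. Your choice of parameters ``$\varepsilon_1=\varepsilon^2/(3N)$, $\varepsilon_0=\varepsilon^2/(3CN)$'' is also tied to the incorrect a.s.\ claim; the correct calibration is $\varepsilon_0=\varepsilon/(2Cn)$ for the iteration and $\tau$ of order $\varepsilon^3$ for Chebyshev.

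\textbf{Independence of $\mathcal{F}_t$ is asserted, not proved.} Extracting $\alpha^{i,k},\beta^{i,k}$ from Proposition \ref{p1} gives NAD strategies in $\mathcal{A}_{t_i,t_{i+1}},\mathcal{B}_{t_i,t_{i+1}}$, i.e.\ adapted to the full filtration $\mathbb{F}$; nothing in that proposition forces them to be $\sigma\{B_r-B_t:r\ge t\}$-measurable. Your sentence ``since the construction uses only the state process and the Brownian increments on $[t,T]$'' therefore begs the question. The paper closes this gap in a separate lemma (the one preceding Lemma \ref{le6}): it introduces the $\mathbb{F}^t$-adapted framework, defines the corresponding value functions $\widetilde{W}_j$, and shows $\widetilde{W}_j=W_j$ via uniqueness of viscosity solutions of the Hamilton--Jacobi--Bellman--Isaacs equation; only then can one choose the one-step $\varepsilon$-optimal strategies to be $\mathbb{F}^t$-adapted, and the pasted couple $(u^\varepsilon,v^\varepsilon)$ becomes independent of $\mathcal{F}_t$.
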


Let us begin with the proof  of Theorem \ref{t2}.\vskip2mm

\begin{proof}\label{}
By Theorem \ref{t1} we only have to prove that, for all
$\varepsilon>0,$ there exists
$(u^{\varepsilon},v^{\varepsilon})\in\mathcal{U}_{t,T}\times\mathcal{V}_{t,T}$
which satisfies (\ref{eq6}) and (\ref{eq7}) for $s\in[t,T], j=1,2$.

For  $\varepsilon>0$, we consider
$(u^{\varepsilon},v^{\varepsilon})\in\mathcal{U}_{t,T}\times\mathcal{V}_{t,T}$
given by Proposition \ref{p2}, i.e., in particular,
$(u^{\varepsilon},v^{\varepsilon})$ is independent of $\mathcal
{F}_{t}$, and we put $s_{1}=s, s_{2}=T$ in Proposition \ref{p2}.
This yields (\ref{eq6}). We also observe that the fact that
$(u^{\varepsilon},v^{\varepsilon})$ is independent of $\mathcal
{F}_{t}$ implies that $J_{j}(t,x;u^{\varepsilon},v^{\varepsilon}),
j=1,2,$ are deterministic and
$\Big\{(J_{1}(t,x;u^{\varepsilon},v^{\varepsilon}),J_{2}(t,x;u^{\varepsilon},v^{\varepsilon})),
\varepsilon>0\Big\}$ is a bounded sequence. Consequently, we can
choose an accumulation point of this sequence, as
$\varepsilon\rightarrow 0$. Let us denote this point by
$(e_{1},e_{2})$. Obviously,  this combined with (\ref{eq6}) allows
to conclude from Theorem \ref{t1} that $(e_{1},e_{2})$ is a Nash
equilibrium payoff at $(t,x)$. We also refer to the fact that since
$(u^{\varepsilon},v^{\varepsilon})$ is independent of $\mathcal
{F}_{t}$, the conditional probability $\mathbb{P}(\cdot |\mathcal
{F}_{t})$ of the event
$\Big\{W_{j}(s_{1},X^{t,x;u^{\varepsilon},v^{\varepsilon}}_{s_{1}})-\varepsilon\leq
\ ^{j}G^{t,x;u^{\varepsilon},v^{\varepsilon}}_{s_{1},s_{2}}
 [W_{j}(s_{2},X^{t,x;u^{\varepsilon},v^{\varepsilon}}_{s_{2}})]\Big\}$
coincides with its probability. Indeed, also
$\Big\{W_{j}(s_{1},X^{t,x;u^{\varepsilon},v^{\varepsilon}}_{s_{1}})-\varepsilon\leq
\ ^{j}G^{t,x;u^{\varepsilon},v^{\varepsilon}}_{s_{1},s_{2}}
 [W_{j}(s_{2},X^{t,x;u^{\varepsilon},v^{\varepsilon}}_{s_{2}})]\Big\}$ is independent of $\mathcal {F}_{t}$.
 The proof is complete.
\end{proof}\\

Before we present the proof of the above Proposition, we give the
following Lemmas, which will be needed.

\begin{lemma}
For all $\varepsilon>0,$ and all $\delta\in[0,T-t]$ and
$x\in\mathbb{R}^{n}$, there exists
$(u^{\varepsilon},v^{\varepsilon})\in\mathcal{U}_{t,T}\times\mathcal{V}_{t,T}$
independent of $\mathcal {F}_{t}$, such that
\begin{eqnarray*}\label{}
 W_{1}(t,x)-\varepsilon\leq
 \ ^{1}G^{t,x;u^{\varepsilon},v^{\varepsilon}}_{t,t+\delta}
 [W_{1}(t+\delta,X^{t,x;u^{\varepsilon},v^{\varepsilon}}_{t+\delta})], \ \mathbb{P}- a.s.,
\end{eqnarray*}
and
\begin{eqnarray*}\label{}
 W_{2}(t,x)-\varepsilon\leq
\ ^{2}G^{t,x;u^{\varepsilon},v^{\varepsilon}}_{t,t+\delta}
 [W_{2}(t+\delta,X^{t,x;u^{\varepsilon},v^{\varepsilon}}_{t+\delta})],\ \mathbb{P}- a.s.
\end{eqnarray*}
\end{lemma}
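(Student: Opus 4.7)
The plan is to apply the dynamic programming principle (Proposition \ref{p1}) simultaneously to the two value functions $W_{1}$ and $W_{2}$, produce one $\varepsilon$-optimal NAD strategy for each, and then use Lemma \ref{l2} to couple them into a single pair of admissible controls $(u^{\varepsilon},v^{\varepsilon})$ realizing both inequalities.

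First, I would apply Proposition \ref{p1} to $W_{1}$. Using the Isaacs condition and the usual essential-supremum argument (in the spirit of Lemma 3.8 of Buckdahn--Li \cite{BL2006}), I would extract $\alpha^{\varepsilon}\in\mathcal{A}_{t,t+\delta}$ such that, for every $\beta\in\mathcal{B}_{t,t+\delta}$,
\begin{eqnarray*}
{}^{1}G^{t,x;\alpha^{\varepsilon},\beta}_{t,t+\delta}\bigl[W_{1}(t+\delta,X^{t,x;\alpha^{\varepsilon},\beta}_{t+\delta})\bigr]\geq W_{1}(t,x)-\varepsilon,\qquad \mathbb{P}\text{-a.s.}
\end{eqnarray*}
A symmetric argument for $W_{2}$, which by (\ref{equa}) equals $\esssup_{\beta}\essinf_{\alpha}J_{2}$, produces $\beta^{\varepsilon}\in\mathcal{B}_{t,t+\delta}$ with
\begin{eqnarray*}
{}^{2}G^{t,x;\alpha,\beta^{\varepsilon}}_{t,t+\delta}\bigl[W_{2}(t+\delta,X^{t,x;\alpha,\beta^{\varepsilon}}_{t+\delta})\bigr]\geq W_{2}(t,x)-\varepsilon,\qquad \mathbb{P}\text{-a.s.},
\end{eqnarray*}
for every $\alpha\in\mathcal{A}_{t,t+\delta}$. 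I then extend $\alpha^{\varepsilon}$ and $\beta^{\varepsilon}$ to NAD strategies on the whole interval $[t,T]$ by appending any fixed measurable continuation.

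Next, Lemma \ref{l2} applied to $(\alpha^{\varepsilon},\beta^{\varepsilon})\in\mathcal{A}_{t,T}\times\mathcal{B}_{t,T}$ yields a unique couple $(u^{\varepsilon},v^{\varepsilon})\in\mathcal{U}_{t,T}\times\mathcal{V}_{t,T}$ with $\alpha^{\varepsilon}(v^{\varepsilon})=u^{\varepsilon}$ and $\beta^{\varepsilon}(u^{\varepsilon})=v^{\varepsilon}$. Substituting the admissible control $v^{\varepsilon}$ (restricted to $[t,t+\delta]$) into the first inequality, and $u^{\varepsilon}$ into the second, and using the identification of $(X,Y,Z)$-triples noted just after the definition of $W$ and $U$, delivers the two asserted bounds at the point $(u^{\varepsilon},v^{\varepsilon})$.

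The main obstacle is arranging that $(u^{\varepsilon},v^{\varepsilon})$ is independent of $\mathcal{F}_{t}$. Since both $W_{1}(t,x)$ and $W_{2}(t,x)$ are deterministic, the $\varepsilon$-maximizing NAD strategies can be selected so that their dependence on $\omega$ enters only through the Brownian increments $B_{s}-B_{t}$, $s\in[t,T]$; the explicit construction mimics the one in the proof of Lemma \ref{le1}, where a deterministic countable Borel partition $\{O_{i}\}$ of $\mathbb{R}^{n}$ is used and the selected strategies $\alpha_{y_{i}}$ are chosen inside the post-$t$ filtration (this is legitimate precisely because the value function is deterministic, so essential-suprema over the larger class and over the $\mathcal{F}_{t}$-independent subclass coincide). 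Once $\alpha^{\varepsilon}$ and $\beta^{\varepsilon}$ are $\mathcal{F}_{t}$-independent in this sense, the iterative construction $(u^{n},v^{n})=(\alpha^{\varepsilon}(v^{n-1}),\beta^{\varepsilon}(u^{n-1}))$ in Step 2 of the proof of Lemma \ref{l2}, started from an $\mathcal{F}_{t}$-independent seed $(u_{0},v_{0})$, preserves $\mathcal{F}_{t}$-independence at every stage; since the limiting stopping time $\tau$ and the pasted control $(u^{\varepsilon},v^{\varepsilon})$ are obtained measurably from this sequence, the final couple is independent of $\mathcal{F}_{t}$ as required.
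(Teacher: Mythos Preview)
Your high-level architecture is the same as the paper's: pick $\varepsilon$-optimal NAD strategies for each player from the dynamic programming principle, couple them via Lemma \ref{l2}, and read off the two inequalities. The substantive difference lies in how you obtain $\mathcal{F}_{t}$-independence, and here your argument has a genuine gap.

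You write that ``essential-suprema over the larger class and over the $\mathcal{F}_{t}$-independent subclass coincide'' because the value function is deterministic. This implication is not automatic: knowing that $\esssup_{\alpha\in\mathcal{A}_{t,T}}(\cdots)$ happens to be a constant does not by itself tell you that $\esssup$ over the smaller class $\mathcal{A}^{t}_{t,T}$ of $\mathcal{F}_{t}$-independent strategies reaches the same constant. That equality is precisely what is at stake, and you are assuming it rather than proving it. The reference to the construction in Lemma \ref{le1} does not help, since there the building blocks $\alpha_{y_{i}}$ are selected from $\mathcal{A}_{t+\delta,T}$, not from any $\mathcal{F}_{t}$-independent subclass.

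The paper closes this gap differently: it introduces the filtration $\mathbb{F}^{t}$ generated by $(B_{s}-B_{t})_{s\in[t,T]}$, defines the value functions $\widetilde{W}_{j}$ in that restricted framework, and then invokes the \emph{uniqueness of viscosity solutions} of the associated Hamilton--Jacobi--Bellman--Isaacs equation (from \cite{BCQ}) to conclude $\widetilde{W}_{j}=W_{j}$ on $[t,T]\times\mathbb{R}^{n}$. Only after this identification does the dynamic programming principle for $\widetilde{W}_{j}$ yield $\varepsilon$-optimal strategies $\alpha_{\varepsilon}\in\mathcal{A}^{t}_{t,T}$ and $\beta_{\varepsilon}\in\mathcal{B}^{t}_{t,T}$ that are automatically $\mathcal{F}_{t}$-independent; Lemma \ref{l2}, rerun inside this smaller framework, then produces the desired $(u^{\varepsilon},v^{\varepsilon})\in\mathcal{U}^{t}_{t,T}\times\mathcal{V}^{t}_{t,T}$. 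Your proposal would be complete if you either reproduce this PDE-uniqueness detour or supply a direct probabilistic argument (e.g., a Girsanov-type shift in the spirit of Buckdahn--Li \cite{BL2006}) showing that restricting to $\mathbb{F}^{t}$-adapted strategies does not lower the value.
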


\begin{proof}
Let $\mathbb{F}^{t}=(\mathcal {F}^{t}_{s})_{s\in [t,T]}$ denote the
filtration generated by  $(B_{s}-B_{t})_{s\in [t,T]}$ and augmented
by the $\mathbb{P}$-null sets. By $\mathcal {U}^{t}_{s,T}$ (resp.,
$\mathcal {V}^{t}_{s,T}$) we denote the set of
$\mathbb{F}^{t}$-adapted processes $\{u_{r}\}_{r\in[s,T]}$ (resp.,
$\{v_{r}\}_{r\in[s,T]}$) taking their values in $U$ (resp., $V$).
Moreover, let $\mathcal {A}^{t}_{s,T}$ (resp., $\mathcal
{B}^{t}_{s,T}$) denote the set of  NAD strategies which map from
$\mathcal {V}^{t}_{s,T}$ into $\mathcal {U}^{t}_{s,T}$ (resp.
$\mathcal {U}^{t}_{s,T}$ into $\mathcal {V}^{t}_{s,T}$). With this
setting we replace the  framework of SDEs driven by a  Brownian
motion $B=(B_{s})_{s\in [0,T]}$ by that of SDEs driven by  a
Brownian motion $(B_{s}-B_{t})_{s\in [t,T]}$. We also translate the
above arguments from the framework of SDEs to the associated BSDEs.
Then, proceeding in the same manner as above, but now in our new
framework, we have the Isaacs condition, for $j=1,2, s\in[t,T]$,
\begin{eqnarray*}\label{}
&&\sup\limits_{u\in U}\inf\limits_{v\in V}
\Big\{\frac{1}{2}tr(\sigma\sigma^{T}(s,x,u,v)A)+\langle p,
b(s,x,u,v)\rangle+f_{j}(s,x,y,p^{T}\sigma(s,x,u,v),u,v) \Big\}\\
&=&\inf\limits_{v\in V}\sup\limits_{u\in U}
\Big\{\frac{1}{2}tr(\sigma\sigma^{T}(s,x,u,v)A)+\langle p,
b(s,x,u,v)\rangle +f_{j}(s,x,y,p^{T}\sigma(s,x,u,v),u,v) \Big\}
\end{eqnarray*}
for the associated value functionals
\begin{eqnarray*}\label{}
\widetilde{W}_{1}(s,x)&=& \esssup_{\alpha\in\mathcal {A}^{t}_{s,T}}
\essinf_{\beta\in\mathcal {B}^{t}_{s,T}}
J_{1}(s,x;\alpha,\beta)=\essinf_{\beta\in\mathcal
{B}^{t}_{s,T}}\esssup_{\alpha\in\mathcal {A}^{t}_{s,T}}
J_{1}(s,x;\alpha,\beta),
\end{eqnarray*}
and
\begin{eqnarray*}\label{}
\widetilde{W}_{2}(s,x)&=&\essinf_{\alpha\in\mathcal
{A}^{t}_{s,T}}\esssup_{\beta\in\mathcal {B}^{t}_{s,T}}
J_{2}(s,x;\alpha,\beta)=\esssup_{\beta\in\mathcal
{B}^{t}_{s,T}}\essinf_{\alpha\in\mathcal {A}^{t}_{s,T}}
J_{2}(s,x;\alpha,\beta),
\end{eqnarray*}
$(s,x)\in[t,T]\times\mathbb{R}^{n}$.

For $j=1,2$, from \cite{BCQ} we know that $W_{j}$ restricted to
$[t,T]\times\mathbb{R}^{n}$ and $\widetilde{W}_{j}$ are inside the
class of continuous functions with at most polynomial growth and the
unique viscosity solutions of the same
Hamilton-Jacobi-Bellman-Isaacs equation. Consequently, they coincide
\begin{eqnarray*}\label{}
\widetilde{W}_{j}(s,x)=W_{j}(s,x), \
(s,x)\in[t,T]\times\mathbb{R}^{n}, j=1,2.
\end{eqnarray*}
From the dynamic programming principle for $\widetilde{W}_{j}$ and
by observing that $\mathcal {V}^{t}_{t,T}\subset \mathcal
{B}^{t}_{t,T}$ we have
\begin{eqnarray*}\label{}
 W_{1}(t,x)&=&\widetilde{W}_{1}(t,x)=\esssup_{\alpha\in\mathcal {A}^{t}_{t,T}}
\essinf_{\beta\in\mathcal {B}^{t}_{t,T}} \
^{1}G^{t,x;\alpha,\beta}_{t,t+\delta}
 [W_{1}(t+\delta,X^{t,x;\alpha,\beta}_{t+\delta})]\\
 &\leq&\esssup_{\alpha\in\mathcal {A}^{t}_{t,T}}
\essinf_{v\in\mathcal {V}^{t}_{t,T}} \
^{1}G^{t,x;\alpha(v),v}_{t,t+\delta}
 [W_{1}(t+\delta,X^{t,x;\alpha(v),v}_{t+\delta})].
\end{eqnarray*}
Consequently,  for $\varepsilon>0$ and $\delta>0$,  there exists
$\alpha_{\varepsilon}\in\mathcal{A}^{t}_{t,T}$ such that, for all
$v\in\mathcal{V}^{t}_{t,T}$,
\begin{eqnarray*}\label{}
 W_{1}(t,x)-\varepsilon\leq
\ ^{1}G^{t,x;\alpha_{\varepsilon}(v),v}_{t,t+\delta}
 [W_{1}(t+\delta,X^{t,x;\alpha_{\varepsilon}(v),v}_{t+\delta})],\
 \mathbb{P}-a.s.
\end{eqnarray*}
The symmetric argument allows to show that the existence of
$\beta_{\varepsilon}\in\mathcal{B}^{t}_{t,T}$ such that, for all
$u\in\mathcal{U}^{t}_{t,T}$,
\begin{eqnarray*}\label{}
 W_{2}(t,x)-\varepsilon\leq
\ ^{2}G^{t,x;u,\beta_{\varepsilon}(u)}_{t,t+\delta}
 [W_{2}(t+\delta,X^{t,x;u,\beta_{\varepsilon}(u)}_{t+\delta})],  \ \mathbb{P}-a.s.
\end{eqnarray*}
In the same way as shown in  Lemma \ref{l2}, we get the existence of
$(u^{\varepsilon},v^{\varepsilon})\in\mathcal{U}^{t}_{t,T}\times\mathcal{V}^{t}_{t,T}$
such that
\begin{eqnarray*}\label{}
\alpha_{\varepsilon}(v^{\varepsilon})=u^{\varepsilon},
\beta_{\varepsilon}(u^{\varepsilon})=v^{\varepsilon}.
\end{eqnarray*}
Therefore, we have
\begin{eqnarray*}\label{}
 W_{1}(t,x)-\varepsilon\leq
\ ^{1}G^{t,x;u^{\varepsilon},v^{\varepsilon}}_{t,t+\delta}
 [W_{1}(t+\delta,X^{t,x;u^{\varepsilon},v^{\varepsilon}}_{t+\delta})],
\end{eqnarray*}
and
\begin{eqnarray*}\label{}
 W_{2}(t,x)-\varepsilon\leq
\ ^{2}G^{t,x;u^{\varepsilon},v^{\varepsilon}}_{t,t+\delta}
 [W_{2}(t+\delta,X^{t,x;u^{\varepsilon},v^{\varepsilon}}_{t+\delta})].
\end{eqnarray*}
The proof is complete.
\end{proof}\\

We also need the following Lemma.

\begin{lemma}\label{le6}
Let $n\geq 1$ and let us fix some partition
$t=t_{0}<t_{1}<\cdots<t_{n}=T$ of the interval $[t,T]$. Then, for
all $\varepsilon>0,$  there exists
$(u^{\varepsilon},v^{\varepsilon})\in\mathcal{U}_{t,T}\times\mathcal{V}_{t,T}$
independent of $\mathcal {F}_{t}$, such that, for all
$i=0,\cdots,n-1$,
\begin{eqnarray*}\label{}
 W_{1}(t_{i},X^{t,x;u^{\varepsilon},v^{\varepsilon}}_{t_{i}})-\varepsilon\leq
\ ^{1}G^{t,x;u^{\varepsilon},v^{\varepsilon}}_{t_{i},t_{i+1}}
 [W_{1}(t_{i+1},X^{t,x;u^{\varepsilon},v^{\varepsilon}}_{t_{i+1}})],\
 \mathbb{P}- a.s.,
\end{eqnarray*}
and
\begin{eqnarray*}\label{}
 W_{2}(t_{i},X^{t,x;u^{\varepsilon},v^{\varepsilon}}_{t_{i}})-\varepsilon\leq
\ ^{2}G^{t,x;u^{\varepsilon},v^{\varepsilon}}_{t_{i},t_{i+1}}
 [W_{2}(t_{i+1},X^{t,x;u^{\varepsilon},v^{\varepsilon}}_{t_{i+1}})],\
 \mathbb{P}- a.s.
\end{eqnarray*}
\end{lemma}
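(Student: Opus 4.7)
The plan is to prove Lemma \ref{le6} by induction on the number $n$ of subintervals, pasting together one-step constructions via a discretization of $\mathbb{R}^n$. The base case $n=1$ is precisely the preceding lemma applied to $[t_0,t_1]=[t,T]$. For the inductive step, I fix a small parameter $\varepsilon_0>0$ to be tuned at the end and construct $(u^\varepsilon,v^\varepsilon)$ in three stages.

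First, apply the preceding lemma to $(t,x)$ and $\delta=t_1-t$ with accuracy $\varepsilon_0$, producing $\mathbb{F}^t$-adapted controls $(u^{(0)},v^{(0)})$ on $[t,t_1]$ that satisfy the required $j=1,2$ inequalities for $i=0$. Second, fix a Borel partition $\{O_k\}_{k\geq 1}$ of $\mathbb{R}^n$ with $\mathrm{diam}(O_k)\leq\varepsilon_0$ and pick representatives $y_k\in O_k$; by the induction hypothesis applied on $[t_1,T]$ with initial state $y_k$, the partition $t_1<\cdots<t_n$, and accuracy $\varepsilon_0$, obtain $\mathbb{F}^{t_1}$-adapted controls $(u^{(k)},v^{(k)})\in\mathcal{U}^{t_1}_{t_1,T}\times\mathcal{V}^{t_1}_{t_1,T}$ satisfying the $n-1$ one-step inequalities from $(t_1,y_k)$. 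Third, paste them: set $u^\varepsilon=u^{(0)}$, $v^\varepsilon=v^{(0)}$ on $[t,t_1]$ and
\[
u^\varepsilon_s=\sum_{k\geq 1}1_{\{X^{t,x;u^{(0)},v^{(0)}}_{t_1}\in O_k\}}u^{(k)}_s,\qquad v^\varepsilon_s=\sum_{k\geq 1}1_{\{X^{t,x;u^{(0)},v^{(0)}}_{t_1}\in O_k\}}v^{(k)}_s,
\]
for $s\in(t_1,T]$. Since $X^{t,x;u^{(0)},v^{(0)}}_{t_1}$ is $\mathcal{F}^t_{t_1}$-measurable and each $(u^{(k)},v^{(k)})$ is $\mathbb{F}^{t_1}$-adapted, the pasted pair $(u^\varepsilon,v^\varepsilon)$ is $\mathbb{F}^t$-adapted, hence independent of $\mathcal{F}_t$.

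To verify the inequalities on $[t_1,T]$, I would use the flow property of the forward SDE: on $\{X^{t,x;u^\varepsilon,v^\varepsilon}_{t_1}\in O_k\}$, the restriction of $X^{t,x;u^\varepsilon,v^\varepsilon}$ to $[t_1,T]$ coincides with $X^{t_1,X^{t,x;u^\varepsilon,v^\varepsilon}_{t_1};u^{(k)},v^{(k)}}$, which differs from $X^{t_1,y_k;u^{(k)},v^{(k)}}$ only through an initial-state perturbation of size $\leq\varepsilon_0$. Lemma \ref{l4} then yields an $L^2$-distance of order $\varepsilon_0$ uniformly on $[t_1,T]$; the Lipschitz estimate for $W_j$ (Lemma \ref{le5}) transfers this to $W_j\bigl(t_i,X^{t,x;u^\varepsilon,v^\varepsilon}_{t_i}\bigr)$ versus $W_j\bigl(t_i,X^{t_1,y_k;u^{(k)},v^{(k)}}_{t_i}\bigr)$; and Lemma \ref{l3} propagates this $O(\varepsilon_0)$ error through the stochastic backward semigroups $^{j}G$. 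Summing over $k$ gives, cell by cell and therefore $\mathbb{P}$-a.s., the inequality
\[
W_j\bigl(t_i,X^{t,x;u^\varepsilon,v^\varepsilon}_{t_i}\bigr)-(1+C)\varepsilon_0\leq\,^{j}G^{t,x;u^\varepsilon,v^\varepsilon}_{t_i,t_{i+1}}\bigl[W_j\bigl(t_{i+1},X^{t,x;u^\varepsilon,v^\varepsilon}_{t_{i+1}}\bigr)\bigr]
\]
for $i=1,\ldots,n-1$ and $j=1,2$; the case $i=0$ holds directly by the choice of $(u^{(0)},v^{(0)})$.

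The main technical obstacle is bookkeeping of the accumulated discretization error: each recursive level contributes a constant $C$ (from Lemmas \ref{le5}, \ref{l4}, \ref{l3}) and $n$ may be large, so the final choice must take $\varepsilon_0$ to be a sufficiently small multiple of $\varepsilon$ depending on $n$ and the cumulative constant $C^n$ arising from the $n$-fold recursion. A secondary point — routine but not automatic — is checking that the pasted $(u^\varepsilon,v^\varepsilon)$ is genuinely an $\mathbb{F}^t$-progressively measurable process valued in $U\times V$; this is the same type of verification already carried out for $\alpha$ in the proof of Lemma \ref{le1}, and relies on $\{O_k\}$ being a countable Borel partition and on the $\mathbb{F}^{t_1}$-adaptedness of each $(u^{(k)},v^{(k)})$.
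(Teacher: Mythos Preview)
Your proposal is correct and uses the same ingredients as the paper: the one-step lemma, a countable Borel partition of $\mathbb{R}^n$, cell-by-cell pasting, and the Lipschitz estimates of Lemmas~\ref{l3}, \ref{l4} and~\ref{le5} to absorb the discretization error. The only structural difference is the direction of the induction. The paper inducts forward on the partition index $i$: assuming $(u^{\varepsilon},v^{\varepsilon})$ already built on $[t,t_i)$, it applies the one-step lemma at time $t_i$ with accuracy $\varepsilon/2$ for each deterministic $y$, discretizes on $X^{t,x;u^{\varepsilon},v^{\varepsilon}}_{t_i}$, and extends to $[t_i,t_{i+1})$, so the error at every step is exactly $\varepsilon$ with no reference to $n$. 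You instead induct on $n$, peeling off $[t,t_1]$ and invoking the hypothesis on $[t_1,T]$. Both close; the paper's forward scheme is slightly cleaner bookkeeping. Your stated concern about a cumulative factor $C^n$ is in fact unnecessary: since the inductive hypothesis is ``for all $\varepsilon'>0$ there exist controls\ldots'', you may apply it on $[t_1,T]$ with $\varepsilon'=\varepsilon_0$ and then take $\varepsilon_0=\varepsilon/(1+C)$, a choice that does not depend on $n$.
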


\begin{proof}
We shall give the proof by induction. By the above lemma, it is
obvious  for $i=0$. We now assume that
$(u^{\varepsilon},v^{\varepsilon})$ independent of $\mathcal
{F}_{t}$, is constructed on the interval $[t,t_{i})$ and we shall
define  it on $[t_{i},t_{i+1})$. From the above lemma it follows
that, for all  $y\in \mathbb{R}^{n}$, there exists
$(u^{y},v^{y})\in\mathcal{U}_{t_{i},T}\times\mathcal{V}_{t_{i},T}$
independent of $\mathcal {F}_{t}$, such that,
\begin{eqnarray}\label{eq10}
 W_{j}(t_{i},y)-\frac{\varepsilon}{2}\leq
\ ^{j}G^{t_{i},y;u^{y},v^{y}}_{t_{i},t_{i+1}}
 [W_{j}(t_{i+1},X^{t_{i},y;u^{y},v^{y}}_{t_{i+1}})], \ \mathbb{P}- a.s,
 j=1,2.
\end{eqnarray}
Let us fix arbitrarily $j=1,2.$  Moreover, for $y, z\in
\mathbb{R}^{n}$ and $s\in[t_{i},t_{i+1}]$, we put
$$y^{1}_{s}=\ ^{j}G^{t_{i},y;u^{y},v^{y}}_{s,t_{i+1}}
 [W_{j}(t_{i+1},X^{t_{i},y;u^{y},v^{y}}_{t_{i+1}})], \ \text{and}\ \
 y^{2}_{s}=\ ^{j}G^{t_{i},z;u^{y},v^{y}}_{s,t_{i+1}}
 [W_{j}(t_{i+1},X^{t_{i},z;u^{y},v^{y}}_{t_{i+1}})],$$
and we consider the BSDEs:
\begin{equation*}\label{}
\begin{array}{lll}
y^{1}_{s}&=& W_{j}(t_{i+1},X^{t_{i},y;u^{y},v^{y}}_{t_{i+1}})
+\displaystyle \int_{s}^{t_{i+1}} f_{j}(r,X^{t_{i},y;u^{y},v^{y}}_r,
y_r^{1},z^{1}_r,u^{y}_r, v^{y}_r)dr\\&&\qquad-\displaystyle
\int_{s}^{t_{i+1}} z^{1}_rdB_r, \quad s\in[t_{i},t_{i+1}],
\end{array}
\end{equation*}
and
\begin{equation*}\label{}
\begin{array}{lll}
y^{2}_{s}&=& W_{j}(t_{i+1},X^{t_{i},z;u^{y},v^{y}}_{t_{i+1}})
+\displaystyle \int_{s}^{t_{i+1}} f_{j}(r,X^{t_{i},z;u^{y},v^{y}}_r,
y_r^{2},z^{2}_r,u^{y}_r, v^{y}_r)dr\\&&\qquad-\displaystyle
\int_{s}^{t_{i+1}} z^{2}_rdB_r,  \quad s\in[t_{i},t_{i+1}].
\end{array}
\end{equation*}
By virtue of the Lemmas \ref{l3},  \ref{l4} and  \ref{le5} we have
\begin{eqnarray*}\label{}
&&|^{j}G^{t_{i},y;u^{y},v^{y}}_{t_{i},t_{i+1}}
 [W_{j}(t_{i+1},X^{t_{i},y;u^{y},v^{y}}_{t_{i+1}})]
-\ ^{j}G^{t_{i},z;u^{y},v^{y}}_{t_{i},t_{i+1}}
 [W_{j}(t_{i+1},X^{t_{i},z;u^{y},v^{y}}_{t_{i+1}})]|^{2}\\
&\leq&C\mathbb{E}[|W_{j}(t_{i+1},X^{t,y;u^{y},v^{y}}_{t_{i+1}})
-W_{j}(t_{i+1},X^{t,z;u^{y},v^{y}}_{t_{i+1}})|^{2}
\Big|\mathcal {F}_{t}]\\
&&+C\mathbb{E}[|\displaystyle \int_{t_{i}}^{t_{i+1}}
f_{j}(r,X^{t_{i},y;u^{y},v^{y}}_r, y_r^{1},z^{1}_r,u^{y}_r,
v^{y}_r)dr -\displaystyle \int_{t_{i}}^{t_{i+1}}
f_{j}(r,X^{t_{i},z;u^{y},v^{y}}_r, y_r^{1},z^{1}_r,u^{y}_r,
v^{y}_r)dr|^{2}
\Big|\mathcal {F}_{t}]\\
&\leq&C\mathbb{E}[|X^{t,y;u^{y},v^{y}}_{t_{i+1}}
-X^{t,z;u^{y},v^{y}}_{t_{i+1}}|^{2} \Big|\mathcal
{F}_{t}]+C\mathbb{E}[\displaystyle \int_{t_{i}}^{t_{i+1}}|
X^{t_{i},y;u^{y},v^{y}}_r-X^{t_{i},z;u^{y},v^{y}}_r |^{2}dr
\Big|\mathcal {F}_{t}]\\
 &\leq & C|y-z|^{2}.
\end{eqnarray*}
Therefore, by the above inequality, Lemma \ref{le5} and (\ref{eq10})
\begin{eqnarray*}\label{}
 W_{j}(t_{i},z)-\varepsilon&\leq&
 W_{j}(t_{i},y)-\varepsilon+C|y-z|\\
 &\leq& \ ^{j}G^{t_{i},y;u^{y},v^{y}}_{t_{i},t_{i+1}}
 [W_{j}(t_{i+1},X^{t_{i},y;u^{y},v^{y}}_{t_{i+1}})]-\frac{\varepsilon}{2}+C|y-z|\\
&\leq& \ ^{j}G^{t_{i},z;u^{y},v^{y}}_{t_{i},t_{i+1}}
 [W_{j}(t_{i+1},X^{t_{i},z;u^{y},v^{y}}_{t_{i+1}})]-\frac{\varepsilon}{2}+C|y-z|\\
 &\leq& \ ^{j}G^{t_{i},z;u^{y},v^{y}}_{t_{i},t_{i+1}}
 [W_{j}(t_{i+1},X^{t_{i},z;u^{y},v^{y}}_{t_{i+1}})], \ \mathbb{P}- a.s.,
\end{eqnarray*}
for $C|y-z|\leq \dfrac{\varepsilon}{2}$.\\

 Let $\{O_{i}\}_{i\geq1}\subset \mathcal {B}(\mathbb{R}^{n})$ be a partition of
$\mathbb{R}^{n}$ with $ diam(O_{i})<\dfrac{\varepsilon}{2C}$ and let
$y_{l}\in O_{l}$. Then, for $z\in O_{l}$,
\begin{eqnarray}\label{e6}
 W_{j}(t_{i},z)-\varepsilon\leq
\ ^{j}G^{t_{i},z;u^{y_{l}},v^{y_{l}}}_{t_{i},t_{i+1}}
 [W_{j}(t_{i+1},X^{t_{i},z;u^{y_{l}},v^{y_{l}}}_{t_{i+1}})], \ \mathbb{P}- a.s.,
\end{eqnarray}
and we define
\begin{eqnarray*}\label{}
u^{\varepsilon}=\sum\limits_{l\geq1}1_{O_{l}}(X^{t,x;u^{\varepsilon},v^{\varepsilon}}_{t_{i}})u^{y_{l}},\
v^{\varepsilon}=\sum\limits_{l\geq1}1_{O_{l}}(X^{t,x;u^{\varepsilon},v^{\varepsilon}}_{t_{i}})v^{y_{l}}.
\end{eqnarray*}
Therefore, we have
\begin{eqnarray*}\label{}
&& ^{j}G^{t,x;u^{\varepsilon},v^{\varepsilon}}_{t_{i},t_{i+1}}
 [W_{j}(t_{i+1},X^{t,x;u^{\varepsilon},v^{\varepsilon}}_{t_{i+1}})]\\
  &=& ^{j}G^{t_{i},X^{t,x;u^{\varepsilon},v^{\varepsilon}}_{t_{i}};u^{\varepsilon},v^{\varepsilon}}_{t_{i},t_{i+1}}
 [\sum\limits_{l\geq1}W_{j}(t_{i+1},X^{t_{i},X^{t,x;u^{\varepsilon},v^{\varepsilon}}_{t_{i}};
 u^{\varepsilon},v^{\varepsilon}}_{t_{i+1}})1_{O_{l}}(X^{t,x;u^{\varepsilon},v^{\varepsilon}}_{t_{i}})]\\
   &=& ^{j}G^{t_{i},X^{t,x;u^{\varepsilon},v^{\varepsilon}}_{t_{i}};u^{\varepsilon},v^{\varepsilon}}_{t_{i},t_{i+1}}
 [\sum\limits_{l\geq1}W_{j}(t_{i+1},X^{t_{i},X^{t,x;u^{\varepsilon},v^{\varepsilon}}_{t_{i}};
 u^{y_{l}},v^{y_{l}}}_{t_{i+1}})1_{O_{l}}(X^{t,x;u^{\varepsilon},v^{\varepsilon}}_{t_{i}})]\\
    &=& \sum\limits_{l\geq1}\ ^{j}G^{t_{i},X^{t,x;u^{\varepsilon},v^{\varepsilon}}_{t_{i}};
    u^{y_{l}},v^{y_{l}}}_{t_{i},t_{i+1}}
 [W_{j}(t_{i+1},X^{t_{i},X^{t,x;u^{\varepsilon},v^{\varepsilon}}_{t_{i}};
 u^{y_{l}},v^{y_{l}}}_{t_{i+1}})]1_{O_{l}}(X^{t,x;u^{\varepsilon},v^{\varepsilon}}_{t_{i}}).
\end{eqnarray*}
The latter relation follows from the uniqueness of solutions of
BSDEs. From (\ref{e6}) it follows that
\begin{eqnarray*}\label{}
&& ^{j}G^{t,x;u^{\varepsilon},v^{\varepsilon}}_{t_{i},t_{i+1}}
 [W_{j}(t_{i+1},X^{t,x;u^{\varepsilon},v^{\varepsilon}}_{t_{i+1}})]\\
&\geq& \sum\limits_{l\geq1}
[W_{j}(t_{i},X^{t,x;u^{y_{l}},v^{y_{l}}}_{t_{i}})-\varepsilon]
1_{O_{l}}(X^{t,x;u^{\varepsilon},v^{\varepsilon}}_{t_{i}})\\
&=& \sum\limits_{l\geq1}
W_{j}(t_{i},X^{t,x;u^{y_{l}},v^{y_{l}}}_{t_{i}})
1_{O_{l}}(X^{t,x;u^{\varepsilon},v^{\varepsilon}}_{t_{i}})-\varepsilon\\
  &=&
  W_{j}(t_{i},X^{t,x;u^{\varepsilon},v^{\varepsilon}}_{t_{i}})-\varepsilon.
\end{eqnarray*}
The proof is complete.
\end{proof}\\

Finally, we give the proof of Proposition \ref{p2}.\\

\begin{proof}
Let $t=t_{0}<t_{1}<\cdots<t_{n}=T$ be a partition of  $[t,T]$, and
  $\tau=\sup\limits_{i}(t_{i+1}-t_{i})$.   From Lemma
\ref{le5} it follows that, for all $j=1,2$, $0\leq k\leq n$,
$s\in[t_{k},t_{k+1})$ and $(u,v)\in\mathcal{U}_{t
,T}\times\mathcal{V}_{t,T}$,
\begin{eqnarray}\label{e8}
&&\mathbb{E}[|W_{j}(t_{k},X_{t_{k}}^{t,x;u,v})
-W_{j}(s,X^{t,x;u,v}_{s})|^{2}]\nonumber\\
 &\leq&2\mathbb{E}[|W_{j}(t_{k},X_{t_{k}}^{t,x;u,v})
-W_{j}(s,X^{t,x;u,v}_{t_{k}})|^{2}]\nonumber\\&&+2\mathbb{E}[|W_{j}(s,X_{t_{k}}^{t,x;u,v})
-W_{j}(s,X^{t,x;u,v}_{s})|^{2}]\nonumber\\
&\leq&C|s-t_{k}| (1+
\mathbb{E}[|X_{t_{k}}^{t,x;u,v}|^{2}])+C\mathbb{E}[|X_{t_{k}}^{t,x;u,v}
-X^{t,x;u,v}_{s}|^{2}]\nonumber\\
 &\leq& C\tau.
\end{eqnarray}
Here and after $C$ is a constant which may be different from line to
line.

 By virtue of Lemma \ref{le6} we let
$(u^{\varepsilon},v^{\varepsilon})\in\mathcal{U}_{t
,T}\times\mathcal{V}_{t,T}$ be defined as in  Lemma \ref{le6} for
$\varepsilon=\varepsilon_{0}$, where $\varepsilon_{0}>0$ will be
specified  later. Then, we have for all $i, 0\leq i \leq n$,
\begin{eqnarray*}\label{}
 W_{j}(t_{i},X^{t,x;u^{\varepsilon},v^{\varepsilon}}_{t_{i}})-\varepsilon_{0}\leq
 \ ^{j}G^{t,x;u^{\varepsilon},v^{\varepsilon}}_{t_{i},t_{i+1}}
 [W_{j}(t_{i+1},X^{t,x;u^{\varepsilon},v^{\varepsilon}}_{t_{i+1}})],\
 \mathbb{P}- a.s.
\end{eqnarray*}
For  $t\leq s_{1}\leq s_{2}\leq T$, we suppose, without loss of
generality, that $t_{i-1}\leq s_{1} \leq t_{i}$ and $t_{k}\leq s_{2}
\leq t_{k+1}$, for some $1\leq i<k\leq n-1$. Therefore, by the
Lemmas \ref{l1} and \ref{l3} we have
\begin{eqnarray*}\label{}
&& ^{j}G^{t,x;u^{\varepsilon},v^{\varepsilon}}_{t_{i},t_{k+1}}
 [W_{j}(t_{k+1},X^{t,x;u^{\varepsilon},v^{\varepsilon}}_{t_{k+1}})]\\
 &=& ^{j}G^{t,x;u^{\varepsilon},v^{\varepsilon}}_{t_{i},t_{k}}[^{j}G^{t,x;u^{\varepsilon},v^{\varepsilon}}_{t_{k},t_{k+1}}
 [W_{j}(t_{k+1},X^{t,x;u^{\varepsilon},v^{\varepsilon}}_{t_{k+1}})]]\\
  &\geq& ^{j}G^{t,x;u^{\varepsilon},v^{\varepsilon}}_{t_{i},t_{k}}
  [W_{j}(t_{k},X^{t,x;u^{\varepsilon},v^{\varepsilon}}_{t_{k}})-\varepsilon_{0}]\\
   &\geq& ^{j}G^{t,x;u^{\varepsilon},v^{\varepsilon}}_{t_{i},t_{k}}
  [W_{j}(t_{k},X^{t,x;u^{\varepsilon},v^{\varepsilon}}_{t_{k}})]-C\varepsilon_{0}\\
   &\geq&\cdots\geq \ ^{j}G^{t,x;u^{\varepsilon},v^{\varepsilon}}_{t_{i},t_{i+1}}
  [W_{j}(t_{i+1},X^{t,x;u^{\varepsilon},v^{\varepsilon}}_{t_{i+1}})]-C(k-i)\varepsilon_{0}\\
 &\geq&  W_{j}(t_{i},X^{t,x;u^{\varepsilon},v^{\varepsilon}}_{t_{i}})-C(k-i+1)\varepsilon_{0}.
\end{eqnarray*}
and the above inequality yields
\begin{eqnarray*}\label{}
&& ^{j}G^{t,x;u^{\varepsilon},v^{\varepsilon}}_{s_{1},t_{k+1}}
 [W_{j}(t_{k+1},X^{t,x;u^{\varepsilon},v^{\varepsilon}}_{t_{k+1}})]\\
 &=& ^{j}G^{t,x;u^{\varepsilon},v^{\varepsilon}}_{s_{1},t_{i}}
 [\ ^{j}G^{t,x;u^{\varepsilon},v^{\varepsilon}}_{t_{i},t_{k+1}}
 [W_{j}(t_{k+1},X^{t,x;u^{\varepsilon},v^{\varepsilon}}_{t_{k+1}})]]\\
 &\geq& ^{j}G^{t,x;u^{\varepsilon},v^{\varepsilon}}_{s_{1},t_{i}}
  [W_{j}(t_{i},X^{t,x;u^{\varepsilon},v^{\varepsilon}}_{t_{i}})-C(k-i+1)\varepsilon_{0}]\\
&\geq& ^{j}G^{t,x;u^{\varepsilon},v^{\varepsilon}}_{s_{1},t_{i}}
  [W_{j}(t_{i},X^{t,x;u^{\varepsilon},v^{\varepsilon}}_{t_{i}})]-C(k-i+1)\varepsilon_{0}\\
  &\geq& ^{j}G^{t,x;u^{\varepsilon},v^{\varepsilon}}_{s_{1},t_{i}}
  [W_{j}(t_{i},X^{t,x;u^{\varepsilon},v^{\varepsilon}}_{t_{i}})]-\frac{\varepsilon}{2},
\end{eqnarray*}
where we put $\varepsilon_{0}=\dfrac{\varepsilon}{2Cn}$. Let us put
\begin{eqnarray}\label{eq110}
 I_{1}&=& ^{j}G^{t,x;u^{\varepsilon},v^{\varepsilon}}_{s_{1},t_{k+1}}
 [W_{j}(t_{k+1},X^{t,x;u^{\varepsilon},v^{\varepsilon}}_{t_{k+1}})]-\
   ^{j}G^{t,x;u^{\varepsilon},v^{\varepsilon}}_{s_{1},t_{i}}
  [W_{j}(t_{i},X^{t,x;u^{\varepsilon},v^{\varepsilon}}_{t_{i}})]+\frac{\varepsilon}{2}\geq0,\nonumber\\
 I_{2}&=&  ^{j}G^{t,x;u^{\varepsilon},v^{\varepsilon}}_{s_{1},s_{2}}
 [W_{j}(s_{2},X^{t,x;u^{\varepsilon},v^{\varepsilon}}_{s_{2}})]
 -W_{j}(s_{1},X^{t,x;u^{\varepsilon},v^{\varepsilon}}_{s_{1}})+\frac{\varepsilon}{2}.
\end{eqnarray}
We assert that
\begin{eqnarray*}\label{}
\mathbb{E}[| I_{1} - I_{2}|^{2}] \leq C\tau.
\end{eqnarray*}
Indeed, setting
$$y_{s}=\
^{j}G^{t,x;u^{\varepsilon},v^{\varepsilon}}_{s,t_{i}}
  [W_{j}(t_{i},X^{t,x;u^{\varepsilon},v^{\varepsilon}}_{t_{i}})], s\in [s_{1},
  t_{i}],$$
   we have the associated BSDEs:
\begin{equation*}\label{}
\begin{array}{lll}
y_{s}&=&
W_{j}(t_{i},X^{t,x;u^{\varepsilon},v^{\varepsilon}}_{t_{i}})
+\displaystyle \int_{s}^{t_{i}}
f_{j}(r,X^{t,x;u^{\varepsilon},v^{\varepsilon}}_r,
y_r,z_r,u^{\varepsilon}_r,
v^{\varepsilon}_r)dr\\&&\qquad-\displaystyle \int_{s}^{t_{i}}
z_rdB_r, \quad s\in [s_{1}, t_{i}].
\end{array}
\end{equation*}
On the other hand, putting
\begin{equation*}\label{}
\begin{array}{lll}
y'_{s}=
W_{j}(s_{1},X^{t,x;u^{\varepsilon},v^{\varepsilon}}_{s_{1}}), \ s\in
[s_{1}, t_{i}],
\end{array}
\end{equation*}
we have by Lemma \ref{l3}
\begin{eqnarray*}
&&|^{j}G^{t,x;u^{\varepsilon},v^{\varepsilon}}_{s_{1},t_{i}}
  [W_{j}(t_{i},X^{t,x;u^{\varepsilon},v^{\varepsilon}}_{t_{i}})]
  -W_{j}(s_{1},X^{t,x;u^{\varepsilon},v^{\varepsilon}}_{s_{1}})|^{2}\\
&\leq &
C\mathbb{E}[|W_{j}(t_{i},X^{t,x;u^{\varepsilon},v^{\varepsilon}}_{t_{i}})-
W_{j}(s_{1},X^{t,x;u^{\varepsilon},v^{\varepsilon}}_{s_{1}})
|^2|\mathcal {F}_{s_{1}}] \\
& & + C \mathbb{E}[\int_{s_{1}}^{t_{i}}
|f_{j}(r,X^{t,x;u^{\varepsilon},v^{\varepsilon}}_r,
y_r,z_r,u^{\varepsilon}_r, v^{\varepsilon}_r) |^2|\mathcal
{F}_{s_{1}}].
\end{eqnarray*}
Therefore, from the boundedness of $f_{j}$ and the independence of
$\mathcal {F}_{t}$ of
$(u^{\varepsilon},v^{\varepsilon})\in\mathcal{U}_{t,T}\times\mathcal{V}_{t,T}$,
\begin{eqnarray*}
&&\mathbb{E}[|^{j}G^{t,x;u^{\varepsilon},v^{\varepsilon}}_{s_{1},t_{i}}
  [W_{j}(t_{i},X^{t,x;u^{\varepsilon},v^{\varepsilon}}_{t_{i}})]
  -W_{j}(s_{1},X^{t,x;u^{\varepsilon},v^{\varepsilon}}_{s_{1}})|^{2}|\mathcal {F}_{t}]\\
  &\leq &
C\mathbb{E}[|W_{j}(t_{i},X^{t,x;u^{\varepsilon},v^{\varepsilon}}_{t_{i}})-
W_{j}(s_{1},X^{t,x;u^{\varepsilon},v^{\varepsilon}}_{s_{1}})
|^2|\mathcal {F}_{t}] + C (t_{i}-s_{1})\\
 &= &
C\mathbb{E}[|W_{j}(t_{i},X^{t,x;u^{\varepsilon},v^{\varepsilon}}_{t_{i}})-
W_{j}(s_{1},X^{t,x;u^{\varepsilon},v^{\varepsilon}}_{s_{1}}) |^2] +
C (t_{i}-s_{1}).
\end{eqnarray*}
From  (\ref{e8}) it follows that
\begin{eqnarray}\label{e10}
\mathbb{E}[|^{j}G^{t,x;u^{\varepsilon},v^{\varepsilon}}_{s_{1},t_{i}}
  [W_{j}(t_{i},X^{t,x;u^{\varepsilon},v^{\varepsilon}}_{t_{i}})]
  -W_{j}(s_{1},X^{t,x;u^{\varepsilon},v^{\varepsilon}}_{s_{1}})|^{2}]\leq C\tau.
\end{eqnarray}
By a similar argument we have
\begin{eqnarray}\label{e11}
\mathbb{E}[|^{j}G^{t,x;u^{\varepsilon},v^{\varepsilon}}_{s_{2},t_{k+1}}
 [W_{j}(t_{k+1},X^{t,x;u^{\varepsilon},v^{\varepsilon}}_{t_{k+1}})]
-W_{j}(s_{2},X^{t,x;u^{\varepsilon},v^{\varepsilon}}_{s_{2}})|^{2}]\leq
C\tau.
\end{eqnarray}
For $s\in [s_{1},s_{2}]$ we let $$y^{1}_{s}=\
^{j}G^{t,x;u^{\varepsilon},v^{\varepsilon}}_{s,t_{k+1}}
 [W_{j}(t_{k+1},X^{t,x;u^{\varepsilon},v^{\varepsilon}}_{t_{k+1}})]
 =\ ^{j}G^{t,x;u^{\varepsilon},v^{\varepsilon}}_{s,s_{2}}
 [^{j}G^{t,x;u^{\varepsilon},v^{\varepsilon}}_{s_{2},t_{k+1}}
 [W_{j}(t_{k+1},X^{t,x;u^{\varepsilon},v^{\varepsilon}}_{t_{k+1}})]],$$
 and
  $$y^{2}_{s}=\ ^{j}G^{t,x;u^{\varepsilon},v^{\varepsilon}}_{s,s_{2}}
 [W_{j}(s_{2},X^{t,x;u^{\varepsilon},v^{\varepsilon}}_{s_{2}})],$$
and we consider the associated BSDEs:
\begin{equation*}\label{}
\begin{array}{lll}
y^{1}_{s}&=&
^{j}G^{t,x;u^{\varepsilon},v^{\varepsilon}}_{s_{2},t_{k+1}}
 [W_{j}(t_{k+1},X^{t,x;u^{\varepsilon},v^{\varepsilon}}_{t_{k+1}})]
+\displaystyle \int_{s}^{s_{2}}
f_{j}(r,X^{t,x;u^{\varepsilon},v^{\varepsilon}}_r,
y_r^{1},z^{1}_r,u^{\varepsilon}_r,
v^{\varepsilon}_r)dr\\&&\qquad-\displaystyle \int_{s}^{s_{2}}
z^{1}_rdB_r,\quad s\in [s_{1},s_{2}],
\end{array}
\end{equation*}
and
\begin{equation*}\label{}
\begin{array}{lll}
y^{2}_{s}&=&
W_{j}(s_{2},X^{t,x;u^{\varepsilon},v^{\varepsilon}}_{s_{2}})
+\displaystyle \int_{s}^{s_{2}}
f_{j}(r,X^{t,x;u^{\varepsilon},v^{\varepsilon}}_r,
y_r^{2},z^{2}_r,u^{\varepsilon}_r,
v^{\varepsilon}_r)dr\\&&\qquad-\displaystyle \int_{s}^{s_{2}}
z^{2}_rdB_r, \quad s\in [s_{1},s_{2}].
\end{array}
\end{equation*}
By virtue of the Lemmas \ref{l3} and \ref{l4} we have
\begin{eqnarray*}\label{}
&&|^{j}G^{t,x;u^{\varepsilon},v^{\varepsilon}}_{s_{1},t_{k+1}}
 [W_{j}(t_{k+1},X^{t,x;u^{\varepsilon},v^{\varepsilon}}_{t_{k+1}})]
-\ ^{j}G^{t,x;u^{\varepsilon},v^{\varepsilon}}_{s_{1},s_{2}}
 [W_{j}(s_{2},X^{t,x;u^{\varepsilon},v^{\varepsilon}}_{s_{2}})]|^{2}\\
&\leq&C\mathbb{E}[|^{j}G^{t,x;u^{\varepsilon},v^{\varepsilon}}_{s_{2},t_{k+1}}
 [W_{j}(t_{k+1},X^{t,x;u^{\varepsilon},v^{\varepsilon}}_{t_{k+1}})]
-W_{j}(s_{2},X^{t,x;u^{\varepsilon},v^{\varepsilon}}_{s_{2}})|^{2}
\Big|\mathcal {F}_{s_{1}}].
\end{eqnarray*}
Consequently, from (\ref{e11}) it follows that
\begin{eqnarray*}\label{}
\mathbb{E}[|^{j}G^{t,x;u^{\varepsilon},v^{\varepsilon}}_{s_{1},t_{k+1}}
 [W_{j}(t_{k+1},X^{t,x;u^{\varepsilon},v^{\varepsilon}}_{t_{k+1}})]
-\ ^{j}G^{t,x;u^{\varepsilon},v^{\varepsilon}}_{s_{1},s_{2}}
 [W_{j}(s_{2},X^{t,x;u^{\varepsilon},v^{\varepsilon}}_{s_{2}})]|^{2}]\leq C\tau.
\end{eqnarray*}
By the above inequality and (\ref{e10}) we get
\begin{eqnarray*}\label{}
\mathbb{E}[| I_{1} - I_{2}|^{2}] \leq C\tau.
\end{eqnarray*}
Consequently,
\begin{eqnarray*}\label{}
\mathbb{P}(I_{2}\leq - \frac{\varepsilon}{2}) \leq \mathbb{P}( |
I_{1} - I_{2}|\geq \frac{\varepsilon}{2}) \leq \frac{4\mathbb{E}[|
I_{1} - I_{2}|^{2}]}{\varepsilon^{2}}\leq
\frac{4C\tau}{\varepsilon^{2}}\leq\varepsilon,
\end{eqnarray*}
where we choose $\tau\leq\dfrac{\varepsilon^{3}}{4C}$, and from
(\ref{eq110}) it follows that
\begin{eqnarray*}\label{}
\mathbb{P}\Big(\
 W_{j}(s_{1},X^{t,x;u^{\varepsilon},v^{\varepsilon}}_{s_{1}})-\varepsilon\leq
\ ^{j}G^{t,x;u^{\varepsilon},v^{\varepsilon}}_{s_{1},s_{2}}
 [W_{j}(s_{2},X^{t,x;u^{\varepsilon},v^{\varepsilon}}_{s_{2}})]\
\Big)\geq 1- \varepsilon.
\end{eqnarray*}
 The proof is complete.
\end{proof}

\vspace{4mm}

\noindent{\bf Acknowledgements.}

\vspace{2mm}
 The  author  thanks  Prof. Rainer Buckdahn for his
careful reading, helpful discussions and  suggestions. The author
also thanks the editor and two anonymous referees for their helpful
suggestions. This  work is supported by the Young Scholar Award for
Doctoral Students of the Ministry of Education of China and the
Marie Curie Initial Training Network (PITN-GA-2008-213841).

\end{document}